%
%
%


\documentclass[12pt]{article} 

\usepackage{amsmath}
\usepackage{amsmath,amscd}
\usepackage{graphicx}
\usepackage{amsthm}
\usepackage{hyperref}

\newtheorem{theorem}{Theorem}[section]
\newtheorem{lemma}[theorem]{Lemma}
\newtheorem{prop}[theorem]{Proposition}
\newtheorem{corollary}[theorem]{Corollary}
\theoremstyle{definition}
\newtheorem{definition}[theorem]{Definition}
\newtheorem{example}[theorem]{Example}

\theoremstyle{remark}
\newtheorem{remark}[theorem]{Remark}

\usepackage{geometry} 
\geometry{a4paper} 
\usepackage{amssymb}
\usepackage{graphicx} 

\usepackage{float} 
\usepackage{wrapfig} 

\usepackage{lipsum} 

\newcommand{\Aut}{\mathrm{Aut}}
\newcommand{\Hom}{\mathrm{Hom}}

\graphicspath{{./Pictures/}} 

\long\def\MSC#1\EndMSC{\def\arg{#1}\ifx\arg\empty\relax\else
	{\par\narrower\noindent%
		2000 Mathematics Subject Classification: #1\par}\fi}

\begin{document}

\title{$G$-marked moduli spaces }
\date{}
\author{Binru Li}
\maketitle

\begin{abstract}
\noindent
The aim of this paper is to investigate the closed subschemes of moduli spaces corresponding to projective varieties which admit an effective action by a given finite group $G$. To achieve this, we introduce the moduli functor $\mathsf{M}^G_h$ of $G$-marked Gorenstein canonical models with Hilbert polynomial $h$, and prove the existence of $\mathfrak{M}_h[G]$, the coarse moduli scheme for $\mathsf{M}^G_h$. Then we show that $\mathfrak{M}_h[G]$ has a proper and finite morphism onto $\mathfrak{M}_h$ so that its image $\mathfrak{M}_h(G)$ is a closed subscheme.\\
In the end we obtain the canonical representation type decomposition $\mathcal{D}_h[G]$ of $\mathfrak{M}_h[G]$ and use $\mathcal{D}_h[G]$ to study the structure of $\mathfrak{M}_h[G]$. 
\end{abstract}

\section{Introduction} 


The moduli theory of algebraic varieties was motivated by the attempt to fully understand Riemann's assertion in \cite{Rie57} that the isomorphism classes of Riemann surfaces of genus $g>1$ depend on $(3g-3)$ parameters (called "moduli"). The modern approach to moduli problems via functors was developed by Grothendieck and Mumford (cf. \cite{MF82}), and later by Gieseker, Koll{\'a}r, Viehweg, et al (cf. \cite{Gie77}, \cite{Kol13}, \cite{Vie95}). The idea is to define a moduli functor for the given moduli problem and study the representability of the moduli functor via an algebraic variety or some other geometric object. For instance, in the case of smooth projective curves of genus $g\geq 2$, we consider the (contravariant) functor $\mathcal{M}_g$ from the category of schemes to the category of sets, such that 
\begin{itemize}
\item[(1)] For any scheme $T$, $\mathcal{M}_g(T)$ consists of the $T$-isomorphism classes of flat projective families of curves of genus $g$ over the base $T$.
\item[(2)] Given a morphism $f:S\rightarrow T$, $\mathcal{M}_g(f):\mathcal{M}_g(T)\rightarrow \mathcal{M}_g(S)$ is the map associated to the pull back.
\end{itemize}
It has been shown by Mumford that there exists a quasi-projective {\em coarse moduli scheme} $\mathfrak{M}_g$ for the functor $\mathcal{M}_g$ (cf. \cite{Mum62}), in the following sense:\\
there exists a natural transformation $\eta:\mathcal{M}_g\rightarrow \Hom(-,\mathfrak{M}_g)$, such that $$\eta_{\mathbf{Spec}(\mathbb{C})}:\mathcal{M}_g(\mathbf{Spec}(\mathbb{C}))\rightarrow \Hom(\mathbf{Spec}(\mathbb{C}),\mathfrak{M}_g)$$ is bijective and $\eta$ is universal among such natural transformations. This means that the closed points of $\mathfrak{M}_g$ are in one to one correspondence with the isomorphism classes of curves of genus $g$ and given a family $\mathfrak{X}\rightarrow T$ of curves of genus $g$, we have a morphism (induced by $\eta$) from $T$ to $\mathfrak{M}_g$ such that any (closed) point $t\in T$ is mapped to $[\mathfrak{X}_t]$ in $\mathfrak{M}_g$. \\
The definitions are the same in higher dimensions, if one replaces curves of genus $g\geq 2$ by Gorenstein varieties with ample canonical classes. The existence of a coarse moduli space is then more difficult to prove, we refer to \cite{Vie95} and \cite{Kol13} for further discussions.\\
For several purposes, it is important to generalize the method to moduli problems of varieties which admit an effective action by a given finite group $G$.
 Here we consider the concept of a {\em $G$-marked variety}, which is a triple ($X$,$G$,$\alpha$) such that $X$ is a projective variety and $\alpha:G\times X\rightarrow X$ is a faithful action. The isomorphisms between $G$-marked varieties are $G$-equivariant isomorphisms (for more details, see definition \ref{def1}). In similarity to the case of $\mathcal{M}_g$, we study in this article the moduli functor $\mathsf{M}^G_h$ of $G$-marked Gorenstein canonical models with Hilbert polynomial $h$ such that, for any scheme $T$, $\mathsf{M}^G_h(T)$ is the set of $T$-isomorphism classes of $G$-marked flat families of Gorenstein canonical models with Hilbert polynomial $h$ over the base $T$, and given a morphism $f:S\rightarrow T$, $\mathsf{M}^G_h(f)$ is the map associated to the set of pull-backs (cf. \ref{def3}). \\
We refer to the recently published survey article \cite{Cat15}, Section 10, for some applications in the case of algebraic curves and surfaces; there the author discusses several topics on the theory of $G$-marked curves  and sketches the construction of the moduli space of $G$-marked canonical models of  surfaces.\\
The main theorem of this article is the following:
\begin{theorem}\label{thm}
Given a finite group $G$ and a Hilbert polynomial $h\in \mathbb{Q}[t]$, there exists a quasi-projective coarse moduli scheme $\mathfrak{M}_h[G]$ for $\mathsf{M}^G_h$, the moduli functor of $G$-marked Gorenstein canonical models with Hilbert polynomial $h$.
\end{theorem}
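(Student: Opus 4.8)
\noindent The plan is to realise $\mathfrak{M}_h[G]$ as the quotient of a locally closed subscheme of a Hilbert scheme by a projective linear group, running in parallel with the construction of the coarse moduli scheme $\mathfrak{M}_h$ for Gorenstein canonical models (cf. \cite{Vie95}, \cite{Kol13}) and following the outline sketched in \cite{Cat15}, Section 10. First, fix $m\gg 0$, depending only on $h$, so that for every Gorenstein canonical model $X$ with Hilbert polynomial $h$ the sheaf $\omega_X^{\otimes m}$ is very ample and projectively normal with vanishing higher cohomology; then $X$ embeds as $X\subset\mathbb{P}^N$, $N+1=h(m)$, via the complete pluricanonical system, such embeddings are parametrized by a locally closed subscheme $\mathcal{H}\subset\mathrm{Hilb}_h^{\mathbb{P}^N}$ on which $\mathrm{PGL}_{N+1}$ acts, and $\mathfrak{M}_h=\mathcal{H}/\mathrm{PGL}_{N+1}$ as a geometric quotient (all points stable, since canonical models have finite automorphism groups).

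Next, I would encode the marking. Because $\omega_X$ is intrinsic, a faithful action $\alpha:G\times X\to X$ induces, by pullback of pluricanonical forms, a canonical \emph{linear} action of $G$ on $H^0(X,\omega_X^{\otimes m})$, hence — after the identification $H^0(X,\omega_X^{\otimes m})\cong\mathbb{C}^{N+1}$ built into a point of $\mathcal{H}$ — a faithful homomorphism $\rho:G\to\mathrm{GL}_{N+1}$, i.e. a point of the affine scheme $R:=\Hom(G,\mathrm{GL}_{N+1})$ of group homomorphisms, whose faithful locus $R^{\mathrm f}\subset R$ is open and closed (the complex representations of the finite group $G$ being rigid). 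Conversely a pair $(x,\rho)\in\mathcal{H}\times R^{\mathrm f}$ for which $\rho(G)$ stabilizes $X_x\subset\mathbb{P}^N$ is precisely a $G$-marked canonical model together with a chosen basis of its pluricanonical sections. Let $\mathcal{H}[G]\subset\mathcal{H}\times R^{\mathrm f}$ be the locus of such pairs; since the conditions ``$\rho(g)\cdot X_x=X_x$ for all $g\in G$'' are closed, $\mathcal{H}[G]$ is a locally closed subscheme of $\mathrm{Hilb}_h^{\mathbb{P}^N}\times R$, and it carries a universal $G$-marked family. The diagonal action $g\cdot(x,\rho)=(g\cdot x,\, g\rho g^{-1})$ of $\mathrm{PGL}_{N+1}$ on $\mathcal{H}[G]$ has the property that two points lie in one orbit if and only if the corresponding $G$-marked models are $G$-equivariantly isomorphic in the sense of Definition \ref{def1}.

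Then I would take the quotient. The stabilizer in $\mathrm{PGL}_{N+1}$ of a point of $\mathcal{H}[G]$ injects into $\Aut(X_x)$ and is therefore finite, and — arguing as for $\mathfrak{M}_h$, or by applying the Keel--Mori theorem to the quotient stack $[\mathcal{H}[G]/\mathrm{PGL}_{N+1}]$ — a geometric quotient $\mathfrak{M}_h[G]:=\mathcal{H}[G]/\mathrm{PGL}_{N+1}$ exists as a quasi-projective scheme. Finally one verifies the coarse moduli property: the natural transformation $\mathsf{M}^G_h\to\Hom(-,\mathfrak{M}_h[G])$ is produced from the fact that any $G$-marked family over $T$ becomes, locally in a suitable topology on $T$ (e.g. over the $\mathrm{GL}_{N+1}$-frame bundle of the pushforward of the relative pluricanonical sheaf), a pullback of the universal family over $\mathcal{H}[G]$, hence induces $T$-morphisms to $\mathcal{H}[G]/\mathrm{PGL}_{N+1}$ that descend and glue; bijectivity on closed points is the orbit description above; and universality of the natural transformation is inherited from that of the geometric quotient.

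The main obstacle is not the quotient step — which is essentially that used for $\mathfrak{M}_h$ — but making $\mathcal{H}[G]$ into a scheme that genuinely represents the rigidified $G$-marked problem: one must check that the fibrewise $\rho(g)$ assemble into an honest automorphism of the universal family over $\mathcal{H}[G]$ commuting with the projection, that the stabilization condition cutting out $\mathcal{H}[G]$ is representable and compatible with arbitrary base change and with flatness, and that faithfulness is preserved in families, so that $\mathrm{PGL}_{N+1}$-orbits on $\mathcal{H}[G]$ correspond \emph{exactly}, with no spurious or missing identifications (in particular none coming from $\Aut(G)$), to the isomorphism classes parametrized by $\mathsf{M}^G_h$.
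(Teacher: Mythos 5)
Your proposal is correct in outline and follows the same master strategy as the paper (Matsusaka boundedness, pluricanonical embedding into a Hilbert scheme, a rigidification of the $G$-action by a linear representation, a GIT quotient, and descent of the classifying map along a frame bundle), but it packages the rigidification differently. The paper fixes a faithful representation $\beta:G\to GL(N+1,\mathbb{C})$ in each equivalence class, cuts out the fixpoint subscheme $\bar H^{G,\beta}_{N,h'}$ of the Hilbert scheme, and quotients by the \emph{centralizer} $C(G,\beta)$, using Viehweg's stability result plus the Hilbert--Mumford criterion to see that all points remain stable for the restricted linearization; the descent to the base $T$ then goes through the bundle of $G$-frames, which is a $C(G,\beta)$-torsor over $T$ (Proposition \ref{prop3}). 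You instead let the representation vary, work with pairs $(x,\rho)\in\mathcal{H}\times \Hom(G,\mathrm{GL}_{N+1})$ and quotient by all of $\mathrm{PGL}_{N+1}$ acting diagonally; this is equivalent to the paper's construction by the standard slice correspondence, since $\Hom^{\mathrm f}(G,\mathrm{GL}_{N+1})$ is a finite disjoint union of conjugation orbits $\mathrm{GL}_{N+1}/C(G,\beta)$ and the fibre of your $\mathcal{H}[G]$ over $\beta$ is exactly $\bar H^{G,\beta}_{N,h'}$ with stabilizer $C(G,\beta)$. What your version buys is that one never has to choose representatives $\beta$ or decompose the base by decomposition type, and the full $\mathrm{GL}_{N+1}$-frame bundle can replace the $G$-frame bundle in the descent; what it loses is that the canonical representation type decomposition $\mathcal{D}_h[G]$ of \ref{CanDecomp}, which the paper extracts directly from the indexing set $\mathcal{B}_N$, becomes implicit (it is recoverable from the connected components of the faithful locus, but this is precisely the point the author highlights as the payoff of the centralizer approach).

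One step you should tighten is the existence of the quotient as a \emph{quasi-projective scheme}. Keel--Mori only produces an algebraic space, so quasi-projectivity needs a separate argument; and if you run GIT directly on $\mathcal{H}[G]\subset\mathcal{H}\times R$ you must deal with the affine factor $R$ in the linearization and in the Hilbert--Mumford analysis, which is not literally ``arguing as for $\mathfrak{M}_h$.'' The cleanest repair is exactly the reduction to the slice, i.e.\ to the paper's quotient $\bar H^{G,\beta}_{N,h'}/C(G,\beta)$ with the restriction of Viehweg's linearized sheaf $\lambda_{k_0}$; with that substitution your argument closes.
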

The above result, while being a consequence of hard results of Viehweg and Koll{\'a}r, fills a gap in the existing literature and shall hopefully be useful for future investigations.\\
 
The structure of this paper is as follows:\\
In section 2 we introduce the definition of "$G$-marked varieties" and the associated moduli problem by defining the moduli functor $\mathsf{M}^G_h$ for a given group $G$ and Hilbert polynomial $h$. \\
In section 3 we study two basic properties (boundedness and local closedness) of the moduli functor $\mathsf{M}^G_h$. \\
Recall that a moduli functor of varieties $\mathsf{M}$ is called {\em bounded} if the objects in $\mathsf{M}({\bf Spec}(\mathbb{C}))$ are parameterized by a finite number of families (cf. \ref{bounded}). 
In this article we show in (\ref{cor1}) that $\mathsf{M}^G_h$ is bounded by a family $U_{N,h'}^G\rightarrow H^{G}_{N,h'}$ over an appropriate subscheme of a Hilbert scheme. \\
However the family $U_{N,h'}^G\rightarrow H^{G}_{N,h'}$ that we get in (\ref{cor1}) may not belong to $\mathsf{M}^G_h(H^{G}_{N,h'})$, i.e., not every fibre of the family is a $G$-marked canonical model. Here comes the problem of local closedness: roughly speaking, a moduli functor $\mathsf{M}$ of varieties is called {\em locally closed} if for any  flat projective family $\mathfrak{X}\rightarrow T$, the subset $\{t\in T|[\mathfrak{X}_t]\in \mathsf{M}(\mathbf{Spec}(\mathbb{C}))\}$ is locally closed in $T$ (see \ref{localclose} for more details). We solve this problem in (\ref{prop5}) by taking a locally closed subscheme $\bar{H}^G_{N,h'}$ of $H^{G}_{N,h'}$ and considering the restriction of $U_{N,h'}^G\rightarrow H^{G}_{N,h'}$ to $\bar{H}^G_{N,h'}$.\\
In Section 4 we first apply Geometric Invariant Theory, obtaining the quotient $\mathfrak{M}_h[G]$ of $\bar{H}^G_{N,h'}$ by some reductive groups. Then we prove that $\mathfrak{M}_h[G]$ is the coarse moduli scheme for our moduli functor $\mathsf{M}^G_h$.\\
In Section 5 we obtain the canonical representation type decomposition $\mathcal{D}_h[G]$ of our moduli space $\mathfrak{M}_h[G]$. We apply different methods to study $\mathcal{D}_h[G]$ in the general setting as well as in the case of algebraic curves.
\begin{remark}
A referee suggested an alternative proof of theorem \ref{thm}, obtained via the idea of constructing a suitable substack of a fibred product of the inertia stack and using then the deep result of Keel and Mori (cf. \cite{KM97}).\\
However we believe that our simple proof relying on GIT techniques sheds light upon a canonical representation type decomposition $\mathcal{D}_h[G]$ (cf.\ref{CanDecomp}) of the moduli space $\mathfrak{M}_h[G]$, which will be useful to study the structure of $\mathfrak{M}_h[G]$. 
\end{remark}
\section{G-marked varieties}
In this article we work over the complex field $\mathbb{C}$. By a "scheme" we mean a separated scheme of finite type over $\mathbb{C}$, a point in a scheme is assumed to be a closed point. Moreover, $G$ shall always denote a finite group.
\begin{definition}[\cite{Cat15}, definition 181]\label{def1}
(1) A \textit{G-marked (projective) variety (resp. scheme)} is a triple $(X,G,\rho)$ where $X$ is a projective variety (resp. scheme) and $\rho:G\rightarrow \Aut(X)$ is an injective homomorphism. Or equivalently, it is a triple $(X,G,\alpha)$ where $\alpha:X\times G\rightarrow X$ is a faithful action of $G$ on $X$.\\
(2) A morphism $f$ between two G-marked varieties $(X,G,\rho)$ and $(X',G,\rho ')$ is a $G$-equivariant morphism $f:X\rightarrow X'$, i.e.,
$\forall g\in G, f\circ \rho(g) = \rho'(g)\circ f$.\\
(3) A family of $G$-marked varieties (resp. schemes) is a triple $((p:\mathfrak{X}\rightarrow T),G,\rho)$, where $G$ acts faithfully on $\mathfrak{X}$ via an injective homomorphism $\rho:G\rightarrow \Aut(\mathfrak{X})$ and trivially on $T$; $p$ is flat, projective, $G$-equivariant and $\forall t\in T$, the induced triple $(\mathfrak{X}_t,G,\rho_t)$ is a $G$-marked variety (resp. scheme).\\
(4) A morphism between two $G$-marked families $((p:\mathfrak{X}\rightarrow T),G,\rho)$ and $((p':\mathfrak{X'}\rightarrow T'),G,\rho')$ 
is a commutative diagram:
$$\begin{CD}
\mathfrak{X} @>\tilde{f}>>   \mathfrak{X}'\\ 
@VVpV                           @VVp'V\\
T                @>f>>             T'  
\end{CD}$$
where $\tilde{f}:\mathfrak{X}\rightarrow \mathfrak{X}'$ is a $G$-equivariant morphism.\\
(5) Let $((p:\mathfrak{X}\rightarrow T),G,\rho)$ be a $G$-marked family and $f:S\rightarrow T$ a morphism. Denoting by $\mathfrak{X}_S$ (or $f^*\mathfrak{X}$) the fiber product of $f$ and $p$, $\rho$ induces a $G$-action $\rho _S$ (or $f^*\rho$) on $\mathfrak{X}_S$ such that $((p_S:\mathfrak{X}_S\rightarrow S),G,\rho _S)=:f^*((p:\mathfrak{X}\rightarrow T),G,\rho)$ is 
again a $G$-marked family.
\end{definition}

\begin{definition}\label{def2}
A normal projective variety $X$ is called a \textit{canonical model} if $X$ has canonical singularities (cf. \cite{Rei87}) and $K_X$ is ample.
\end{definition}

\begin{definition}\label{def3}
Denote by $\mathfrak{Sch}$ the category of schemes (over $\mathbb{C}$). The \textit{moduli functor of $G$-marked Gorenstein canonical models} with Hilbert polynomial $h\in \mathbb{Q}[t]$ is a contravariant functor:
$$\mathsf{M}^G_h:\mathfrak{Sch}\rightarrow \mathfrak{Sets},\  such\ that$$ 
(1) For any scheme $T$,
\begin{align*}
\mathsf{M}^G_h(T):=\{((p:\mathfrak{X}\rightarrow T),G,\rho)|\ p \text{ is flat and projective, all fibres of p}\\
 \text{ are canonical models, } \omega_{\mathfrak{X}/T} \text{ is invertible, } \\
        \forall t \in T,\forall k\in \mathbb{N}, \chi(\mathfrak{X}_t,\omega^k_{\mathfrak{X}_t})=h(k)\}/\simeq 
 \end{align*}
where "$\simeq$" is the equivalence relation given by the isomorphisms of $G$-marked families over $T$ (i.e., in the commutative diagram of \ref{def1} (4), take $T'=T$ and $f=id_T$).\\
(2) Given $f\in \Hom(S,T)$, $\mathsf{M}^G_h (f): \mathsf{M}^G_h(T) \rightarrow \mathsf{M}^G_h(S)$ is the map associated to the pull back, i.e.,
$$[((p:\mathfrak{X}\rightarrow T),G,\rho)] \mapsto [((p_S:\mathfrak{X}_S\rightarrow S),G,\rho_S)].$$
\end{definition}
\begin{remark}
In this article, whenever we write $((\mathfrak{X}\rightarrow T),G,\rho)\in \mathsf{M}^G_h(T)$, we mean choosing a representative $((\mathfrak{X}\rightarrow T),G,\rho)$ from the isomorphism class $[((\mathfrak{X}\rightarrow T),G,\rho)]\in \mathsf{M}^G_h(T)$.
\end{remark}
In the case where $G$ is trivial, we denote by $\mathsf{M}_h$ the corresponding functor.  \\

\section{Basic properties of $\mathsf{M}^G_h$}
In this section we study two important properties of the moduli functor $\mathsf{M}^G_h$: boundedness and local closedness. The main results are (\ref{prop4}), (\ref{cor1}) for boundedness and (\ref{prop5}) for local closedness.

\begin{definition}\label{bounded}
A moduli functor $\mathsf{M}$ of varieties is called {\em bounded} if there exists a flat and projective family $\mathfrak{U}\rightarrow S$ over a scheme $S$ such that $\forall [X]\in \mathsf{M}({\bf Spec}(\mathbb{C}))$, $X$ is isomorphic to a fibre $\mathfrak{U}_s$ for some $s\in S$. (For a stronger definition, see \cite{Kov09}, Definition 5.1)
\end{definition}

In the case where $G$ is trivial boundedness is already known (cf. \cite{Kar00}, \cite{Mat86}). However we can not apply it directly to the general case since we have an action by $G$.
Here we introduce the notion of "bundle of $G$-frames" to solve this problem.\\
Let $Y$ be a scheme and $\mathcal{E}$ a locally free sheaf of rank $n$ on $Y$. Set 
$$\mathbb{V}(\mathcal{E}):=\mathbf{Spec}_Y Sym(\mathcal{E}^{\vee}),$$
the geometric vector bundle associated to $\mathcal{E}$ over $Y$ (cf. \cite{Har77}\footnote{The bundle $\mathbb{V}(\mathcal{E})$ defined here is in fact dual to that of \cite{Har77}.}, Exercise II.5.18). 
\begin{definition}[Frame Bundle]\label{def4}
Let $\mathcal{E}$ be a locally free sheaf of rank $n$ on a scheme $Y$. In this article we call (what is in bundle theory called) the principal bundle associated to $\mathbb{V}(\mathcal{E})$ the \textit{frame bundle} $\mathcal{F}(\mathcal{E})$ of $\mathcal{E}$ over $Y$. For any $y\in Y$, the fibre $\mathcal{F}(\mathcal{E})_y$ over $y$ is called the set of frames (i.e., bases) for the vector space $\mathcal{E}\otimes \mathbb{C}(y)$. \\
Hence $\mathcal{F}(\mathcal{E})$ is the open subscheme  of $\mathbb{V}(\mathcal{H}om_{\mathcal{O}_Y}(\mathcal{O}^n_Y, \mathcal{E}))$ such that $\forall y \in Y$, the fibre $\mathcal{F}(\mathcal{E})_y$ corresponds to the invertible homomorphisms. We denote a point in $\mathcal{F}(\mathcal{E})$ as a pair $(y,\psi)$, where $y$ is a point in $Y$ and $\psi:\mathbb{C}^n\rightarrow \mathcal{E}\otimes \mathbb{C}(y)$ is an isomorphism of $\mathbb{C}$-vector spaces.
\end{definition}

\begin{prop}\label{prop1}
Let $\mathcal{E}$ be a locally free sheaf of rank $n$ on a scheme $Y$ and $p:\mathcal{F}(\mathcal{E})\rightarrow Y$ the natural projection.
There exists a tautological isomorphism $\phi_{\mathcal{E}}:\mathcal{O}^n_{\mathcal{F}(\mathcal{E})}\rightarrow p^*\mathcal{E}$ of sheaves on $\mathcal{F}(\mathcal{E})$ such that for any point $z:=(y,\psi)\in \mathcal{F}(\mathcal{E})$, $\phi_{\mathcal{E}}|_{\{z\}}=\psi$ via the isomorphism $Hom(\mathbb{C}^n,p^*\mathcal{E}\otimes \mathbb{C}(z))\simeq Hom(\mathbb{C}^n,\mathcal{E}\otimes \mathbb{C}(y))$.
\end{prop}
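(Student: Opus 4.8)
The plan is to produce $\phi_{\mathcal{E}}$ as the restriction to $\mathcal{F}(\mathcal{E})$ of a \emph{universal} sheaf homomorphism living on the total space of $\mathbb{V}(\mathcal{H}om_{\mathcal{O}_Y}(\mathcal{O}^n_Y,\mathcal{E}))$, and then to promote it to an isomorphism by invoking the fibrewise invertibility condition that cuts out $\mathcal{F}(\mathcal{E})$.

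First I would record the universal property of $\mathbb{V}(-)$ in the convention of the footnote: for a locally free sheaf $\mathcal{G}$ on $Y$ and any $Y$-scheme $q:W\to Y$, the $Y$-morphisms $W\to\mathbb{V}(\mathcal{G})=\mathbf{Spec}_Y\,\mathrm{Sym}(\mathcal{G}^{\vee})$ are in natural bijection with $\Gamma(W,q^*\mathcal{G})$, via the chain $\Hom_{Y\text{-alg}}(\mathrm{Sym}(\mathcal{G}^{\vee}),q_*\mathcal{O}_W)=\Hom_{\mathcal{O}_Y}(\mathcal{G}^{\vee},q_*\mathcal{O}_W)=\Gamma(W,q^*\mathcal{G})$, and this bijection is compatible with base change, in particular with restriction to a point $z\in W$. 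Taking $\mathcal{G}:=\mathcal{H}om_{\mathcal{O}_Y}(\mathcal{O}^n_Y,\mathcal{E})$, $W:=\mathbb{V}(\mathcal{G})$ with structure morphism $\pi$, and the identity morphism $\mathrm{id}_W$, one obtains a tautological global section of $\pi^*\mathcal{G}\cong\mathcal{H}om_{\mathcal{O}_W}(\mathcal{O}^n_W,\pi^*\mathcal{E})$, i.e. a homomorphism $\phi:\mathcal{O}^n_W\to\pi^*\mathcal{E}$ whose fibre at any point $z=(y,\psi)\in W$ is, by the base-change compatibility, exactly the homomorphism $\psi:\mathbb{C}^n\to\mathcal{E}\otimes\mathbb{C}(y)$ that $z$ represents.

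Then I would restrict to the open subscheme $\mathcal{F}(\mathcal{E})\subseteq W$ of Definition \ref{def4}, whose points are precisely the pairs $(y,\psi)$ at which $\psi$ is invertible, and set $\phi_{\mathcal{E}}:=\phi|_{\mathcal{F}(\mathcal{E})}$. By the previous step $\phi_{\mathcal{E}}|_{\{z\}}=\psi$ for every $z=(y,\psi)\in\mathcal{F}(\mathcal{E})$, so $\phi_{\mathcal{E}}$ is a morphism of locally free sheaves of equal rank $n$ which is invertible on every fibre; hence it is an isomorphism of sheaves (locally its matrix representative has everywhere-nonvanishing, hence invertible, determinant — or apply Nakayama fibrewise). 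This yields both assertions of the proposition.

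The only genuinely delicate point is the bookkeeping of the dual in the footnote's convention, so that $Y$-morphisms into $\mathbb{V}(\mathcal{G})$ correspond to sections of $q^*\mathcal{G}$ rather than of $q^*\mathcal{G}^{\vee}$, together with pinning down the canonical identification $\pi^*\mathcal{H}om_{\mathcal{O}_Y}(\mathcal{O}^n_Y,\mathcal{E})\cong\mathcal{H}om_{\mathcal{O}_W}(\mathcal{O}^n_W,\pi^*\mathcal{E})$ compatibly with restriction to points; once the adjunction is set up in this way, the fibre statement $\phi_{\mathcal{E}}|_{\{z\}}=\psi$ becomes a formality. If one prefers to avoid the functor-of-points language, an equivalent route is to trivialize $\mathcal{E}$ over an affine open cover $\{U_i\}$ of $Y$, identify $\mathcal{F}(\mathcal{E})|_{U_i}$ with $U_i\times\mathrm{GL}_n$, take $\phi_{\mathcal{E}}$ over $U_i$ to be the ``universal matrix'' of coordinate functions on the $\mathrm{GL}_n$-factor, and check that these local expressions glue via the transition matrices of $\mathcal{E}$ — the gluing works precisely because the frame bundle transition functions act on the $\mathrm{GL}_n$-factor by those same matrices.
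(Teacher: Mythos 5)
Your proposal is correct and follows essentially the same route as the paper: the paper's proof simply asserts the existence of the $n$ tautological global sections $s_1(\mathcal{E}),\dots,s_n(\mathcal{E})$ of $p^*\mathcal{E}$ with $s_i(\mathcal{E})\otimes\mathbb{C}(z)=\psi(e_i)$ and declares the resulting universal basis morphism an isomorphism, while you supply the standard construction of that tautological datum via the universal property of $\mathbb{V}(-)$ and the fibrewise-invertibility argument that the paper leaves implicit. No gap; your version is just a more detailed write-up of the same argument.
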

\begin{proof}
This proposition is well known (the idea is similar to that of \cite{Gro58}). Observe that $p^*\mathcal{E}$ has $n$ global sections $s_1(\mathcal{E}),...,s_n(\mathcal{E})$
 such that for any $z=(y,\psi)\in \mathcal{F}(\mathcal{E})$, $s_i(\mathcal{E})\otimes \mathbb{C}(z)=\psi(e_i)$, where $\{e_i\}^n_{i=1}$ is the canonical basis of $\mathbb{C}^n$ and we identify $p^*\mathcal{E}\otimes \mathbb{C}(z)$ with $\mathcal{E}\otimes \mathbb{C}(y)$. Then the universal basis morphism $\phi_\mathcal{E}:=(s_1(\mathcal{E}),...,s_n(\mathcal{E})):\mathcal{O}^n_{\mathcal{F}(\mathcal{E})}\rightarrow p^*\mathcal{E}$ is an isomorphism of locally free sheaves.
\end{proof}
\begin{remark}\label{Can}
The set of sections $\{s_i(\mathcal{E})\}^n_{i=1}$ (or equivalently, the isomorphism $\phi_{\mathcal{E}}$) satisfies the following compatibility conditions:\\
(1) Let $f:X\rightarrow Y$ be a morphism and  $f_\mathcal{F}:\mathcal{F}(f^*\mathcal{E})\rightarrow \mathcal{F}(\mathcal{E})$ the induced morphism: we have that $f^*_{\mathcal{F}}(s_i(\mathcal{E}))=s_i(f^*\mathcal{E})$.\\
(2) Given an isomorphism $l:\mathcal{E}_1\rightarrow \mathcal{E}_2$ of locally free sheaves on $Y$, noting that the induced isomorphism $l_{\mathcal{F}}:\mathcal{F}(\mathcal{E}_1)\rightarrow \mathcal{F}(\mathcal{E}_2)$ commutes with the projections $p_j:\mathcal{F}(\mathcal{E}_j)\rightarrow Y$, $j=1,2$, we have then $l^*_{\mathcal{F}}(\phi_{\mathcal{E}_2})=p_1^*(l)\circ \phi_{\mathcal{E}_1}$.
\end{remark}
\begin{definition}\label{def5}
Let $\mathcal{E}$ be a locally free sheaf of rank $n$ on a scheme $Y$: we say that a group $G$ acts \textit{faithfully and linearly} on $\mathcal{E}$ if \\
(1) the action is given by an injective homomorphism $\rho:G\hookrightarrow \Aut_{\mathcal{O}_Y}(\mathcal{E})$;\\
(2) $\forall y\in Y$, the induced action $\rho_y$ is a faithful $G$-representation on $\mathbb{C}^n$.\\
In this case we call the pair $(\mathcal{E},\rho)$ a {\em locally free $G$-sheaf }.
\end{definition}

\begin{definition}\label{def6}
(1) Given $\phi \in \Aut(Y)$, let $\Gamma_{\phi}:Y\rightarrow Y\times Y$ be the graph map of $\phi$. The \textit{fixpoints scheme} of $\phi$ (denoted by Fix($\phi$)) is the (scheme-theoretic) inverse image of $\Delta$ by $\Gamma_{\phi}$, where $\Delta$ is the diagonal subscheme of $Y\times Y$.\\
(2) Given a $G$-action on $Y$, the \textit{fixpoints scheme} of $G$ on $Y$ is: 
$$Y^G:=\cap_{g\in G}Fix (\phi_g),$$ where $\phi_g:Y\rightarrow Y, y\mapsto gy$. 
\end{definition}
\begin{remark}\label{rk2}
(1) Let $f:X\rightarrow Y$ be a $G$-equivariant morphism between two schemes on which $G$ acts: we have a natural restriction morphism $f|_{X^G}:X^G\rightarrow Y^G$.\\
(2) Given a $G$-action on $Y$ and a subgroup $H$ of $G$, there is an induced $C(H)$-action on $Y^H$, where $C(H)$ denotes the centralizer group of $H$ in $G$.
\end{remark}
\begin{definition}\label{def7}
Let $(\mathcal{E},\rho)$ be a locally free $G$-sheaf of rank $n$ on $Y$. Given a faithful linear representation $\beta: G\rightarrow GL(n,\mathbb{C})$, we define an action $(\beta,\rho)$ of $G$ on $\mathcal{H}om_{\mathcal{O}_Y}(\mathcal{O}_Y^n,\mathcal{E})$: $\forall g\in G$, open subset $U\subset Y, \phi \in \mathcal{H}om_{\mathcal{O}_Y}(\mathcal{O}_Y^n,\mathcal{E})(U)\ and\ s\in \mathcal{O}_Y^n(U)$; $(g\phi)(s):=\rho(g)\phi(\beta(g^{-1})s)$. The action $(\beta,\rho)$ restricts naturally to $\mathcal{F}(\mathcal{E})$, we denote by $\mathcal{F}(\mathcal{E},G,\rho;\beta)$ the corresponding fixpoints scheme: it is called the \textit{bundle of $G$-frames of $\mathcal{E}$ associated to the action $\rho$ with respect to $\beta$}.
\end{definition} 
\begin{remark}\label{rk2.5}
(1) Denoting by $C(G,\beta)$ the centralizer group of $\beta(G)$ in $GL(n,\mathbb{C})$, an easy observation is that $\forall y\in Y$, the fiber $\mathcal{F}(\mathcal{E},G,\rho;\beta)_y$ corresponds to the set of $G$-equivariant isomorphisms between the $G$-linear representations $\beta$ and $\rho_y$. Therefore we have that either $\mathcal{F}(\mathcal{E},G,\rho;\beta)_y=\emptyset$,  or $\mathcal{F}(\mathcal{E},G,\rho;\beta)_y\simeq C(G,\beta)$.\\
(2) If $\beta,\beta':G\rightarrow GL(n,\mathbb{C})$ are equivalent representations (i.e., there exists $g\in GL(n,\mathbb{C})$ such that $\beta'=g\beta g^{-1}$), then we have that $\mathcal{F}(\mathcal{E},G,\rho;\beta)\simeq \mathcal{F}(\mathcal{E},G,\rho;\beta')$.
\end{remark}
Observe that if $Y$ is connected and there exists $y\in Y$ such that $\mathcal{F}(\mathcal{E},G,\rho;\beta)_y\simeq C(G,\beta)$, then $\mathcal{F}(\mathcal{E},G,\rho;\beta)_{y'}\simeq C(G,\beta)$ for all $y'\in Y$ (See \cite{Cat13}, Prop 37), hence we have the following definition:
\begin{definition}
Let $Y$ be a connected scheme and $(\mathcal{E},\rho)$ a locally free $G$-sheaf of rank $n$ on $Y$. We say that ($\mathcal{E},\rho)$ (or $\mathcal{E}$ if the action is clear from the context) has \textit{decomposition type $\beta$}, where $\beta: G\rightarrow GL(n,\mathbb{C})$ is a faithful representation, if there exists $y\in Y$, such that $\rho_y\simeq \beta$.
\end{definition}

\begin{definition}[Bundle of $G$-frames]\label{def8}
Let ($\mathcal{E},\rho)$ be a locally free $G$-sheaf of rank $n$ on a scheme $Y$. We define the \textit{bundle of $G$-frames of $\mathcal{E}$ associated to $\rho$}, denoted by $\mathcal{F}(\mathcal{E},G,\rho)$ (or $\mathcal{F}(\mathcal{E},G)$ when $\rho$ is clear from the context), as follows:\\
If $Y$ is connected and $\mathcal{E}$ has decomposition type $\beta$, then $\mathcal{F}(\mathcal{E},G,\rho):=\mathcal{F}(\mathcal{E},G,\rho;\beta)$.\\
In general we decompose $Y$ into the union of connected components $Y=\sqcup Y_i$ and $\mathcal{F}(\mathcal{E},G,\rho)$ is the (disjoint) union of all the $\mathcal{F}(\mathcal{E}|_{Y_i},G,\rho|_{Y_i})$.
\end{definition}
\begin{remark}
Since we can vary $\beta$ in its equivalence class,  we see from (\ref{rk2.5}) that $\mathcal{F}(\mathcal{E},G)$ is unique up to isomorphisms. 
\end{remark}
\begin{definition}\label{def9}
Let $(\mathcal{E},\rho)$ be a {\em free} $G$-sheaf of rank $n$ on a scheme $Y$: the action is said to be \textit{defined over $\mathbb{C}$} if there exists a $G$-equivariant isomorphism $\phi: (\mathcal{O}^n_Y,\beta)\rightarrow (\mathcal{E},\rho)$, where $\beta$ is a faithful $G$-representation on $\mathbb{C}^n$.
\end{definition}
\begin{prop}\label{prop2}
Given $(\mathcal{E},\rho)$ a locally free $G$-sheaf of rank $n$ on a connected scheme $Y$ with decomposition type $\beta$, the projection $p:\mathcal{F}(\mathcal{E},G)\rightarrow Y$ induces an action $p^*\rho$ on $p^*\mathcal{E}$. Then $(p^*\mathcal{E},p^*\rho)$ is defined over $\mathbb{C}:$ the morphism $\phi_{\mathcal{E},G}:=\phi_{\mathcal{E}}|_{\mathcal{F}(\mathcal{E},G)}:(\mathcal{O}^n_{\mathcal{F}(\mathcal{E},G)},\beta)\rightarrow (p^*\mathcal{E},p^*\rho)$ is a $G$-equivariant isomorphism, where $\phi_\mathcal{E}$ is the universal basis morphism defined in $(\ref{prop1})$.
\end{prop}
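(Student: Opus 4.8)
The plan is to reduce the statement to the universal property of the frame bundle together with the defining property of a fixpoint scheme, so that essentially no computation is needed. Since $\phi_{\mathcal{E}}\colon \mathcal{O}^n_{\mathcal{F}(\mathcal{E})}\to p^*\mathcal{E}$ is an isomorphism of locally free sheaves by Proposition \ref{prop1}, and $\mathcal{F}(\mathcal{E},G)$ is, by Definitions \ref{def8} and \ref{def7} (using that $Y$ is connected with decomposition type $\beta$), the fixpoint scheme of the $(\beta,\rho)$-action on $\mathcal{F}(\mathcal{E})$, hence a closed subscheme $i\colon\mathcal{F}(\mathcal{E},G)\hookrightarrow\mathcal{F}(\mathcal{E})$, its restriction $\phi_{\mathcal{E},G}:=i^*\phi_{\mathcal{E}}\colon\mathcal{O}^n_{\mathcal{F}(\mathcal{E},G)}\to(p\circ i)^*\mathcal{E}=p^*\mathcal{E}$ is again an isomorphism of locally free sheaves; here $p$ on the right denotes the projection $\mathcal{F}(\mathcal{E},G)\to Y$. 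Thus the only point left to establish is the $G$-equivariance of $\phi_{\mathcal{E},G}$ with respect to the constant action $\beta$ on the source and the induced action $p^*\rho$ on the target, i.e. the identity $p^*\rho(g)\circ\phi_{\mathcal{E},G}=\phi_{\mathcal{E},G}\circ\beta(g)$ for all $g\in G$.

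Next I would pass to functors of points over $Y$. Using Definition \ref{def4} and the identification of $\mathbb{V}(-)$ with global sections of a pullback, for a $Y$-scheme $t\colon T\to Y$ one has $\mathcal{F}(\mathcal{E})(T)=\{\,\mathcal{O}_T\text{-linear isomorphisms }\psi\colon\mathcal{O}^n_T\to t^*\mathcal{E}\,\}$, with the universal point $\mathrm{id}\in\mathcal{F}(\mathcal{E})(\mathcal{F}(\mathcal{E}))$ corresponding to $\phi_{\mathcal{E}}$. By Definition \ref{def7} the action $(\beta,\rho)$ reads, functorially, $g\cdot\psi=t^*\rho(g)\circ\psi\circ\beta(g^{-1})$, and this preserves the open subfunctor of isomorphisms. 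Since by Definition \ref{def6} the fixpoint scheme represents the subfunctor of points fixed by $G$ (with the trivial action on $T$), we obtain $\mathcal{F}(\mathcal{E},G)(T)=\{\,\psi\colon\mathcal{O}^n_T\to t^*\mathcal{E}\mid t^*\rho(g)\circ\psi\circ\beta(g^{-1})=\psi\ \forall g\in G\,\}$, which is exactly the set of $G$-equivariant isomorphisms $(\mathcal{O}^n_T,\beta)\to(t^*\mathcal{E},t^*\rho)$. Applying this with $T=\mathcal{F}(\mathcal{E},G)$ and its universal point, whose underlying isomorphism is $\phi_{\mathcal{E},G}$ by the pullback-compatibility of the tautological sections (Remark \ref{Can}(1)), yields $p^*\rho(g)\circ\phi_{\mathcal{E},G}\circ\beta(g^{-1})=\phi_{\mathcal{E},G}$ for every $g\in G$, which is the desired $G$-equivariance. (The hypotheses that $Y$ be connected and that $\mathcal{E}$ have decomposition type $\beta$ only serve, via the observation preceding Definition \ref{def8} and \cite{Cat13}, Prop.\ 37, to guarantee that $\mathcal{F}(\mathcal{E},G)\to Y$ is surjective — in fact a torsor under $C(G,\beta)$ — so that the statement is not vacuous; they play no role in the equivariance itself.)

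The step I expect to demand the most care is the bookkeeping in the middle paragraph: one must keep straight the three a priori different $G$-actions involved — the twisted action $(\beta,\rho)$ used to carve $\mathcal{F}(\mathcal{E},G)$ out of $\mathcal{F}(\mathcal{E})$, the constant action $\beta$ on $\mathcal{O}^n_{\mathcal{F}(\mathcal{E},G)}$, and the pulled-back action $p^*\rho$ — and check that the ``$\beta$-twist'' built into Definition \ref{def7} is precisely what converts the fixpoint condition into the equivariance condition. A reader preferring a concrete argument may instead cover $Y$ by open sets on which $(\mathcal{E},\rho)\cong(\mathcal{O}^n,\beta)$ $G$-equivariantly — possible because $C(G,\beta)$ is a product of general linear groups, hence special, so the torsor $\mathcal{F}(\mathcal{E},G)$ is Zariski-locally trivial — and then verify $p^*\rho(g)\circ\phi_{\mathcal{E},G}=\phi_{\mathcal{E},G}\circ\beta(g)$ fibrewise using the explicit description of the fibres of $\mathcal{F}(\mathcal{E},G,\rho;\beta)$ in Remark \ref{rk2.5}(1); the two routes are equivalent.
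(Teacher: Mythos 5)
Your proposal is correct and follows essentially the same route as the paper: both arguments reduce at once to the observation that the fixpoint condition for the twisted action $(\beta,\rho)$ on the frame bundle is exactly the $G$-equivariance condition on frames, so that points of $\mathcal{F}(\mathcal{E},G)$ are precisely $G$-equivariant isomorphisms. The only difference is packaging --- the paper checks equivariance of $\phi_{\mathcal{E},G}$ at each closed point $(y,\psi)$, while you check it at the universal $T$-point with $T=\mathcal{F}(\mathcal{E},G)$, which is marginally more robust (no reduction to closed points of a possibly non-reduced scheme is needed) but is the same idea.
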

\begin{proof}
It is clear that $\phi_{\mathcal{E},G}$ is an isomorphism of sheaves, what remains to show is that $\phi_{\mathcal{E},G}$ is $G$-equivariant. Since $\phi_{\mathcal{E},G}$ is an isomorphism of locally free sheaves, it suffices to show that $\forall (y,\psi)\in \mathcal{F}(\mathcal{E},G)$, $\phi_{\mathcal{E},G}|_{\{(y,\psi)\}}$ is $G$-equivariant. By our construction in (\ref{prop1}), we have that $p^{-1}(y)\subset \mathbb{V}(\mathcal{H}om_{\mathcal{O}_Y}(\mathcal{O}_Y^n,\mathcal{E}))^G_{y} \simeq \Hom(\mathbb{C}^n,\mathcal{E}\otimes \mathbb{C}(y))^G$, where the $G$-action is $(\beta,\rho_y)$. Under this isomorphism the point $(y,\psi)$ corresponds exactly to $\phi_{\mathcal{E},G}|_{\{(y,\psi)\}}$, hence $\phi_{\mathcal{E},G}|_{\{(y,\psi)\}}$ is $G$-equivariant.
\end{proof} 
\begin{remark}\label{CanG}
Given a locally free $G$-sheaf $(\mathcal{E},\rho)$ of rank $n$ on $Y$, setting $s_i(\mathcal{E},G):=s_i(\mathcal{E})|_{\mathcal{F}(\mathcal{E},G)}$, then $\{s_i(\mathcal{E},G)\}$ and $\phi_{\mathcal{E},G}$ have similar properties as $\{s_i(\mathcal{E})\}$ and $\phi_{\mathcal{E}}$ have in (\ref{Can}).
\end{remark}
\begin{prop}\label{prop3}
Assume that $Y$ is connected and $(\mathcal{E},\rho)$ is a locally free $G$-sheaf of rank $n$ on Y with decomposition type $\beta$. Then there is a natural $C(G,\beta)$-action on $\mathcal{F}(\mathcal{E},G)$ and $Y$ is a categorical quotient of $\mathcal{F}(\mathcal{E},G)$ by $C(G,\beta)$.
\end{prop}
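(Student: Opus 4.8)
The plan is to realise $p:\mathcal{F}(\mathcal{E},G)\to Y$ as a Zariski-locally trivial principal $C(G,\beta)$-bundle and then to use the standard fact that the base of such a bundle is a categorical (in fact geometric) quotient of the total space by the structure group.

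\textbf{The action.} Recall from Definition \ref{def7} that $\mathcal{F}(\mathcal{E},G)=\mathcal{F}(\mathcal{E},G,\rho;\beta)$ is the fixpoint scheme of the $(\beta,\rho)$-action of $G$ on $\mathcal{F}(\mathcal{E})\subset \mathbb{V}(\mathcal{H}om_{\mathcal{O}_Y}(\mathcal{O}^n_Y,\mathcal{E}))$. The group $GL(n,\mathbb{C})$ acts on $\mathbb{V}(\mathcal{H}om_{\mathcal{O}_Y}(\mathcal{O}^n_Y,\mathcal{E}))$ by precomposition on the $\mathcal{O}^n_Y$-factor, $\phi\mapsto \phi\circ h^{-1}$, and this restricts to the open subscheme $\mathcal{F}(\mathcal{E})$ since it preserves invertibility. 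Using the formula $(g\phi)(s)=\rho(g)\phi(\beta(g^{-1})s)$ a short check shows that $h\in GL(n,\mathbb{C})$ commutes with the $G$-action precisely when $h$ commutes with $\beta(g)$ for all $g\in G$, i.e. when $h\in C(G,\beta)$. Hence $C(G,\beta)$ commutes with the $G$-action and therefore preserves the fixpoint scheme $\mathcal{F}(\mathcal{E},G)$; this is the asserted natural $C(G,\beta)$-action (note $C(G,\beta)\cong\prod_i GL(m_i,\mathbb{C})$ is a connected reductive group). As the $GL(n,\mathbb{C})$-action is ``vertical'', $p$ is $C(G,\beta)$-invariant, and by Remark \ref{rk2.5}, using that $Y$ is connected of decomposition type $\beta$, each fibre $p^{-1}(y)$ is nonempty and is a single $C(G,\beta)$-orbit, in fact a $C(G,\beta)$-torsor.

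\textbf{Local triviality.} I claim that, Zariski-locally on $Y$, the $G$-sheaf $(\mathcal{E},\rho)$ is $G$-isomorphic to the constant one $(\mathcal{O}^n_Y,\beta)$, i.e. is ``defined over $\mathbb{C}$'' in the sense of Definition \ref{def9}. Trivialising $\mathcal{E}$ on a small open $U$ turns $\rho$ into a homomorphism $G\to GL(n,\mathcal{O}_U)$, that is an $\mathcal{O}_U$-family of $n$-dimensional representations of the finite group $G$; since such representations are rigid over $\mathbb{C}$ (there are no infinitesimal deformations as $|G|$ is invertible — this is exactly the input of \cite{Cat13}, Prop.\ 37 used to define the decomposition type), after shrinking $U$ this family is conjugate to the constant one $\beta$. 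Consequently $\mathcal{F}(\mathcal{E},G)|_U\cong \mathcal{F}(\mathcal{O}^n_U,G,\beta;\beta)=U\times C(G,\beta)$, compatibly with $p$ and with the $C(G,\beta)$-action. Thus $p$ is a Zariski-locally trivial principal $C(G,\beta)$-bundle; in particular it is faithfully flat, and the action morphism induces an isomorphism $\mathcal{F}(\mathcal{E},G)\times_Y\mathcal{F}(\mathcal{E},G)\cong \mathcal{F}(\mathcal{E},G)\times C(G,\beta)$ carrying the two projections to the first projection and the action map respectively.

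\textbf{Categorical quotient.} Surjectivity of $p$ together with its $C(G,\beta)$-invariance give half of the statement. For the universal property, let $f:\mathcal{F}(\mathcal{E},G)\to Z$ be a $C(G,\beta)$-invariant morphism of schemes. Under the isomorphism above, $C(G,\beta)$-invariance of $f$ says exactly that $f$ coequalizes the two projections $\mathcal{F}(\mathcal{E},G)\times_Y\mathcal{F}(\mathcal{E},G)\rightrightarrows\mathcal{F}(\mathcal{E},G)$; since $p$ is faithfully flat and $\Hom(-,Z)$ is a sheaf for the fppf topology, $f$ descends to a unique $\bar f:Y\to Z$ with $f=\bar f\circ p$. (Equivalently, local triviality shows $f$ is constant along the $C(G,\beta)$-fibres over each trivialising chart, and the resulting morphisms $U\to Z$ agree on overlaps, hence glue; uniqueness of $\bar f$ follows since $p$ is an epimorphism.) This exhibits $Y$, with $p$, as a categorical quotient of $\mathcal{F}(\mathcal{E},G)$ by $C(G,\beta)$. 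The only non-formal ingredient is the local triviality of $(\mathcal{E},\rho)$ in the second step, equivalently the rigidity of finite-group representations in characteristic $0$ — it is this, rather than mere transitivity on fibres, that upgrades $p$ to a genuine principal bundle; everything else is bookkeeping with the frame-bundle construction and faithfully flat descent.
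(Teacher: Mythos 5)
Your proof is correct, but it reaches the categorical quotient by a genuinely different route than the paper. The paper first observes (as you do) that the $GL(n,\mathbb{C})$-action by precomposition commutes with the $G$-action $(\beta,\rho)$, so $C(G,\beta)$ acts on the fixpoint scheme; it then notes that $p$ is affine and $C(G,\beta)$-equivariant, reduces to $Y=\mathbf{Spec}\,A$, $\mathcal{F}(\mathcal{E},G)=\mathbf{Spec}\,B$ with $A\subset B^{C(G,\beta)}\subset B$, invokes reductivity of $C(G,\beta)$ to make $\mathbf{Spec}\,B^{C(G,\beta)}$ the universal categorical quotient, and finally uses the fibrewise quotient property plus Nakayama to force $A=B^{C(G,\beta)}$. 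You instead upgrade $p$ to a Zariski-locally trivial principal $C(G,\beta)$-bundle and conclude by descent (or by gluing over trivialising charts). The one step you should tighten is the local triviality claim: appealing to ``rigidity'' (vanishing of infinitesimal deformations) of representations of a finite group does not by itself produce an algebraic conjugating family over a Zariski neighbourhood; the clean fix is the averaging operator. Trivialise $\mathcal{E}$ on an affine $U$ so that $\rho$ becomes $\rho_U:G\to GL(n,\mathcal{O}(U))$, fix $y_0\in U$ and an isomorphism $\psi_0:\beta\to\rho_{y_0}$, and set $P:=\frac{1}{|G|}\sum_{g\in G}\rho_U(g)\,\psi_0\,\beta(g)^{-1}$; then $\rho_U(g)P=P\beta(g)$ for all $g$ and $P(y_0)=\psi_0$ is invertible, so $P$ is a $G$-equivariant trivialisation on the open locus where $\det P\neq 0$. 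With that patched, your argument is complete, and it buys something: it avoids reductivity of $C(G,\beta)$ and the GIT input entirely, and it yields the stronger conclusion that $p$ is a principal bundle, hence that $Y$ is a geometric (indeed universal categorical) quotient, whereas the paper's argument only asserts the categorical quotient property directly.
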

\begin{proof}
To see the $C(G,\beta)$-action, it suffices to notice that the actions $\beta$ and $\rho$ on $\mathcal{F}(\mathcal{E})$ commute, i.e., $\forall g\in G$, $\beta(g)\rho(g)=\rho(g)\beta(g)$ as elements in $\Aut(\mathcal{F}(\mathcal{E}))$. \\
From the definition of $\mathcal{F}(\mathcal{E},G)$ (cf. \ref{def8}), one observes that the projection $p:\mathcal{F}(\mathcal{E},G)\rightarrow Y$ is affine and $C(G,\beta)$-equivariant, therefore we may assume that $Y,\mathcal{F}(\mathcal{E},G)$ are affine schemes and $A$ (resp. $B$) is the coordinate ring of $Y$ (resp. $\mathcal{F}(\mathcal{E},G)$). Since $p$ is surjective and $C(G,\beta)$-equivariant, we have that $A\subset B^{C(G,\beta)}\subset B$. Noting that $B$ is a finitely generated $\mathbb{C}$-algebra and $C(G,\beta)$ is a reductive group (cf. \ref{rk3}), we conclude that $B^{C(G,\beta)}$ is a finitely generated $\mathbb{C}$-algebra and $\mathbf{Spec}B^{C(G,\beta)}$ is the universal categorical quotient of $\mathcal{F}(\mathcal{E},G)$ by $C(G,\beta)$ (cf. \cite{MF82}, p.27). Now since every fibre of $p$ is a  closed $C(G,\beta)$-orbit (in fact isomorphic to $C(G,\beta)$), which must be mapped to a point in $\mathbf{Spec} B^{C(G,\beta)}$, for dimensional reasons we conclude that $B^{C(G,\beta)}$ is a finite $A$-module. For any maximal ideal $m$ of $A$, by the property of a universal categorical quotient (cf. \cite{MF82}, p.4) we see that $\mathbf{Spec}(B^{C(G,\beta)}\otimes_A \mathbb{C}(m))$ is the categorical quotient of $p^{-1}(\mathbf{Spec}(\mathbb{C}(m)))\simeq C(G,\beta)$ by $C(G,\beta)$, hence $B^{C(G,\beta)}\otimes_A \mathbb{C}(m)=\mathbb{C}$, which implies that $(B^{C(G,\beta)}/A)\otimes_A \mathbb{C}(m)=0$. By Nakayama's lemma, we have that $(B^{C(G,\beta)}/A)_m=0$, which implies that $A=B^{C(G,\beta)}$.
\end{proof}
 Before stating the Boundedness theorem, let us first recall the action of general linear groups on Hilbert schemes (cf. \cite{Vie95}, Section 7.1).\\
 Denote by $H_{n,h}$ the Hilbert scheme of closed subschemes of $\mathbb{P}^n$ with Hilbert polynomial $h$ and by $U_{n,h}\subset H_{n,h}\times \mathbb{P}^n$ the universal family. Let $\Phi:GL(n+1,\mathbb{C})\times \mathbb{P}^n\rightarrow \mathbb{P}^n$ be the natural action, so that there is an action $\Psi:GL(n+1)\times H_{n,h}\rightarrow H_{n,h}$ such that $\forall g\in GL(n+1,\mathbb{C})$, $U_{n,h}$ is invariant under the morphism $\Psi_g\times \Phi_{g}$. 
 
 
Given a (finite) group $G$, a faithful representation of $G$ on $V:=\mathbb{C}^{n+1}$ is given by an injective homomorphism $\beta:G\rightarrow GL(n+1,\mathbb{C})$, or equivalently, by a decomposition $V=\bigoplus _{\rho\in Irr(G)}W^{n(\rho)}_\rho$. Two representations are equivalent (i.e. the images of $G$ are conjugate as subgroups of $GL(n+1,\mathbb{C})$) if and only if they have the same decomposition type (cf. \cite{Ser77}, Chap.2), hence the set of  equivalence classes $\mathcal{B}_n$ of $G$-representations on $V$ is finite.
\begin{definition}\label{def10}
Given $\beta:G\rightarrow GL(n+1,\mathbb{C})$ a faithful representation, it induces an action $\Psi|_{\beta(G)}$ of $G$ on $H_{n,h}$. Define $H^{G,\beta}_{n,h}$ as the fixpoints scheme of the $\beta(G)$-action on $H_{n,h}$ and denote by $U_{n,h}^{G,\beta}$ the restriction of $U_{n,h}$ from $H_{n,h}$ to  $H^{G,\beta}_{n,h}$.
\end{definition}
\begin{remark}\label{rk3}
(1) We have already seen that $C(G,\beta)$, the centralizer group of $\beta(G)$ in $GL(n+1,\mathbb{C})$, has a natural action on $H^{G,\beta}_{n,h}$ (cf. \ref{rk2}). By Schur's Lemma one obtains that $C(G,\beta)\simeq \Pi_{\rho\in Irr(G)}GL(n(\rho),\mathbb{C})$, hence $C(G,\beta)$ is reductive. \\
(2) Let $\beta,\beta'$ be two equivalent representations, such that $\beta'=g\beta g^{-1}$ for some $g\in GL(n+1,\mathbb{C})$. We have that $H^{G,\beta}_{n,h}$ is isomorphic to $H^{G,\beta'}_{n,h}$ via $\Psi_g$ as subschemes of $H_{n,h}$.\\
(3) Since $U_{n,h}^{G,\beta}$ (as a subscheme of $H^{G,\beta}_{n,h}\times \mathbb{P}^n$) is invariant under the action $id\times (\Phi|_{\beta(G)})$, we obtain a $G$-marked family $((p_\beta:U_{n,h}^{G,\beta}\rightarrow H^{G,\beta}_{n,h}),G,\beta)$.
\end{remark}
\begin{definition}\label{def11}
Let $V$ be a $\mathbb{C}$-vector space of dimension $n+1$. Denoting by $\mathcal{B}_n$ the set of equivalence classes of linear representations of $G$ on $V$, we pick one representative in each equivalence class of $\mathcal{B}_n$ and define: $$((p:U_{n,h}^G\rightarrow H^{G}_{n,h}),G,\mathcal{B}_n):=\bigsqcup_{[\beta]\in \mathcal{B}_n}((p_\beta:U_{n,h}^{G,\beta}\rightarrow H^{G,\beta}_{n,h}),G,\beta),$$
where "$\bigsqcup$" means a disjoint union.\\
 Note that two different choices of representatives result in isomorphic families.
\end{definition}
By Matsusaka's big theorem (\cite{Mat86}, Theorem 2.4), there exists an integer $k_0$ such that $\forall [X]\in \mathsf{M}_h(\mathbf{Spec}\mathbb{C})$, $\omega^{k_0}_X$ is very ample and has vanishing higher cohomology groups, we fix one such $k_0$ for the rest of this article (we refer to \cite{Siu93}, \cite{Dem96} and \cite{Siu02} for effective bounds on $k_0$). Given a family ($p:\mathfrak{X}\rightarrow T$) $\in \mathsf{M}_h(T)$, by "Cohomology and Base change" (cf. \cite{Mum70},  II.5), $p_{*}(\omega^{k{_0}}_{\mathfrak{X}/T})$ is a locally free sheaf of rank $h(k_0)$. Moreover we have a surjection $p^*p_{*}(\omega^{k{_0}}_{\mathfrak{X}/T})\twoheadrightarrow \omega^{k{_0}}_{\mathfrak{X}/T}$, which induces a $T$-embedding $i:\mathfrak{X}\hookrightarrow \mathbb{P}(p_{*}(\omega^{k{_0}}_{\mathfrak{X}/T}))$ such that $\omega^{k{_0}}_{\mathfrak{X}/T}\simeq i^*(\mathcal{O}_{\mathbb{P}(p_{*}(\omega^{k{_0}}_{\mathfrak{X}/T}))}(1))$ (cf. \cite{Har77}, II.7.12).  Assuming in addition that $p_{*}(\omega^{k{_0}}_{\mathfrak{X}/T})$ is trivial, the $T$-embedding becomes $i:\mathfrak{X}\hookrightarrow T\times \mathbb{P}^N$ ($N:=h(k_0)-1$). Setting $h'(k):=h(k_0k)$, there exists a morphism $f:T\rightarrow H_{N,h'}$ such that $\mathfrak{X}\simeq f^*U_{N,h'}$. Now  taking the group action into account, we have the following:
\begin{prop}[Boundedness]\label{prop4}
Given $((p:\mathfrak{X}\rightarrow T),G,\rho)\in \mathsf{M}^G_h(T)$, we denote by $\bar{\rho}$ the induced action of $G$ on $p_{*}(\omega^{k{_0}}_{\mathfrak{X}/T})$. Assume that $p_{*}(\omega^{k{_0}}_{\mathfrak{X}/T})$ is trivial and $\bar{\rho}$ is defined over $\mathbb{C}$, then there exists $f: T\rightarrow H^G_{N,h'}$ such that, $((\mathfrak{X} \rightarrow T),G,\rho)\simeq  f^*((U_{N,h'}^G\rightarrow H^{G}_{N,h'}),G,\mathcal{B}_N)$ and $\omega^{k_0}_{\mathfrak{X}/T}\simeq \mathcal{O}_{T\times \mathbb{P}^N}(1)|_{\mathfrak{X}}$.
\end{prop}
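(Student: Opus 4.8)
The plan is to reduce to the non-equivariant construction recalled just before the statement, and then carry the $G$-action along at each step. Since $p_*(\omega^{k_0}_{\mathfrak{X}/T})$ is locally free of rank $h(k_0)=N+1$ by cohomology and base change, and is assumed trivial, the evaluation surjection $p^*p_*(\omega^{k_0}_{\mathfrak{X}/T})\twoheadrightarrow\omega^{k_0}_{\mathfrak{X}/T}$ produces a $T$-embedding $i:\mathfrak{X}\hookrightarrow T\times\mathbb{P}^N$ with $\omega^{k_0}_{\mathfrak{X}/T}\simeq\mathcal{O}_{T\times\mathbb{P}^N}(1)|_{\mathfrak{X}}$, and a unique classifying morphism $f:T\to H_{N,h'}$ with $\mathfrak{X}=(f\times\mathrm{id}_{\mathbb{P}^N})^*U_{N,h'}$. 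What must be added is that $i$ can be chosen $G$-equivariant and that $f$ then factors through $H^G_{N,h'}$.

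First I would invoke the hypothesis that $\bar\rho$ is defined over $\mathbb{C}$: by Definition \ref{def9} there is a faithful representation $\beta:G\to GL(N+1,\mathbb{C})$ and a $G$-equivariant isomorphism $\phi:(\mathcal{O}^{N+1}_T,\beta)\xrightarrow{\sim}(p_*(\omega^{k_0}_{\mathfrak{X}/T}),\bar\rho)$. (If $T$ is disconnected one does this componentwise, with possibly different representations, and takes the disjoint union at the end, which is precisely the shape of $((U^G_{N,h'}\to H^G_{N,h'}),G,\mathcal{B}_N)$; so I may assume $T$ connected.) The evaluation map above is $G$-equivariant because it is canonical and $\rho$ induces $\bar\rho$ by definition; composing it with the isomorphism $\mathbb{P}(\phi)$ realizes $i$ as a $G$-equivariant embedding $\mathfrak{X}\hookrightarrow T\times\mathbb{P}^N$, where $G$ acts trivially on $T$ and on $\mathbb{P}^N$ through $\beta$ (via $\Phi|_{\beta(G)}$, with the projectivization/duality conventions fixed in the text). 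In particular $\rho$ on $\mathfrak{X}$ is identified with the restriction of $\mathrm{id}_T\times\Phi|_{\beta(G)}$ to $\mathfrak{X}$.

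Next I would show $f$ factors through $H^{G,\beta}_{N,h'}$. By the previous step, $G$-invariance of $i$ says $(\mathrm{id}_T\times\Phi_{\beta(g)})(\mathfrak{X})=\mathfrak{X}$ for all $g\in G$. Using the defining compatibility $(\Psi_{g'}\times\Phi_{g'})(U_{N,h'})=U_{N,h'}$, a short computation identifies the classifying morphism of $(\mathrm{id}_T\times\Phi_{\beta(g)})(\mathfrak{X})$ with $\Psi_{\beta(g)}\circ f$; by uniqueness of classifying morphisms we get $\Psi_{\beta(g)}\circ f=f$ for every $g\in G$. By Definition \ref{def6} this is exactly the statement that $f$ factors scheme-theoretically through $\bigcap_{g\in G}\mathrm{Fix}(\Psi_{\beta(g)})=H^{G,\beta}_{N,h'}$, hence through the summand $H^{G,\beta}_{N,h'}\subset H^G_{N,h'}$ (Definition \ref{def11}). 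Finally, the standard isomorphism $\mathfrak{X}\simeq(f\times\mathrm{id}_{\mathbb{P}^N})^*U^{G,\beta}_{N,h'}$ of families over $T$ is $G$-equivariant: on the source the action is $\rho$, on the target it is the pull-back of $\beta$, and they agree precisely because $i$ intertwines $\rho$ with $\mathrm{id}_T\times\Phi|_{\beta(G)}$. This yields the asserted isomorphism of $G$-marked families, and the relation $\omega^{k_0}_{\mathfrak{X}/T}\simeq\mathcal{O}_{T\times\mathbb{P}^N}(1)|_{\mathfrak{X}}$ is the one already recorded.

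I do not expect a serious obstacle; the content is all in naturality of the constructions plus uniqueness of classifying maps. The genuinely delicate part is the bookkeeping in the last two paragraphs: keeping the conventions for the $GL(N+1,\mathbb{C})$-actions on $\mathbb{P}^N$, on $\mathbb{P}(p_*\omega^{k_0}_{\mathfrak{X}/T})$ and on $H_{N,h'}$ mutually consistent, so that ``$\mathfrak{X}$ is $G$-invariant'' translates correctly into ``$f$ lands in the fixed-point subscheme''. One point worth flagging explicitly is that $\beta$ is necessarily \emph{faithful} — so that $H^{G,\beta}_{N,h'}$ is defined as in \ref{def10} — which is consistent with, and indeed forced by, faithfulness of $\rho$ together with the very-ampleness of $\omega^{k_0}$ on the fibres.
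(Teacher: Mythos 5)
Your proposal is correct and follows essentially the same route as the paper: reduce to a connected base, use the hypothesis that $\bar\rho$ is defined over $\mathbb{C}$ to make the pluricanonical embedding $G$-equivariant with $G$ acting on $\mathbb{P}^N$ through a fixed representation $\beta$, and then deduce $\Psi_{\beta(g)}\circ f=f$ from the compatibility $(\Psi_{g'}\times\Phi_{g'})(U_{N,h'})=U_{N,h'}$ together with the uniqueness in the universal property of the Hilbert scheme. The paper carries out your ``short computation'' explicitly as a chain of pullback identities, but the content is identical.
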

\begin{proof}
It suffices to prove the statement on each connected component of $T$, hence we may assume that $T$ is connected and $p_{*}(\omega^{k{_0}}_{\mathfrak{X}/T})$ has decomposition type $\beta$. \\
The action $\bar{\rho}$ induces an action of $G$ on $\mathbf{Proj}_{T}(p_{*}(\omega^{k{_0}}_{\mathfrak{X}/T}))=T\times \mathbb{P}^N$ such that the embedding $i:\mathfrak{X}\rightarrow T\times \mathbb{P}^N$ is $G$-equivariant. Since by assumption $\bar{\rho}$ is defined over $\mathbb{C}$, we may require that the $G$-action on $T\times \mathbb{P}^N$ is given by $\pi_2^*(\beta)$, where $\pi_2:T\times \mathbb{P}^N\rightarrow \mathbb{P}^N$ is the projection onto the second factor. Now by the universal property of the Hilbert scheme, there exists $f:T\rightarrow H_{N,h'}$, such that $i(\mathfrak{X})= (f\times id_{\mathbb{P}^N})^*U_{N,h'}$. To complete the proof, it remains to show that $f$ factors through $H^{G,\beta}_{N,h'}$, which is equivalent to the property that $\forall g\in G,\Psi_{\beta(g)}\circ f=f$; again by the universal property of the Hilbert scheme, this is equivalent to showing that $\forall g\in G$, $((\Psi_{\beta(g)}\circ f)\times id_{\mathbb{P}^N})^*U_{N,h'}=i(\mathfrak{X})$. However we have that 
$$( (\Psi_{\beta(g)}\circ f)\times id_{\mathbb{P}^N})^*U_{N,h'}=(f\times id_{\mathbb{P}^N})^*(\Psi_{\beta(g)}\times  id_{\mathbb{P}^N})^*U_{N,h'}$$
$$=(f\times id_{\mathbb{P}^N})^*(id_{U_{N,h'}}\times \Phi_{\beta(g)^{-1}})^*U_{N,h'}=(id_T\times \Phi_{\beta(g)^{-1}})^*(f\times id_{\mathbb{P}^N})^*U_{N,h'}$$
$$=(id_T\times \Phi_{\beta(g)^{-1}})^*(i(\mathfrak{X})),$$ 
which is simply $i(\mathfrak{X})$ as the embedding $i:\mathfrak{X}\rightarrow T\times\mathbb{P}^N$ is $G$-equivariant.
\end{proof}
Combining (\ref{prop2}) with (\ref{prop4}), we have the following corollary:
\begin{corollary}\label{cor1}
For any scheme $T$ and $ ((p:\mathfrak{X}\rightarrow T),G,\rho)\in \mathsf{M}^G_{h}(T)$, let $q:\mathcal{F}(p_*(\omega^{k_0}_{\mathfrak{X}/T}),G)\rightarrow T$ be the bundle of $G$-frames of $p_*(\omega^{k_0}_{\mathfrak{X}/T})$ over T. Then the isomorphism $\phi_{p_*(\omega^{k_0}_{\mathfrak{X}/T}),G}$ induces a morphism $$f_{\mathfrak{X}/T,k_0,G}:\mathcal{F}(p_*(\omega^{k_0}_{\mathfrak{X}/T}),G)\rightarrow H^G_{N,h'}$$ such that $$\mathsf{M}^G_h(q)((\mathfrak{X}\rightarrow T),G,\rho)\simeq f_{\mathfrak{X}/T,k_0,G}^*((U_{N,h'}^G\rightarrow H^{G}_{N,h'}),G,\mathcal{B}_N),$$ where $N:=h(k_0)-1$, $h'(k):=h(k_0k)$.
\end{corollary}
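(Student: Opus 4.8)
\emph{Proof proposal.} The plan is to reduce the statement to Propositions \ref{prop2} and \ref{prop4} by base changing the given $G$-marked family along the projection $q$ from the $G$-frame bundle. Write $\mathcal{E}:=p_*(\omega^{k_0}_{\mathfrak{X}/T})$. By the choice of $k_0$ (Matsusaka's big theorem) together with Cohomology and Base Change, $\mathcal{E}$ is locally free of rank $N+1=h(k_0)$; since $p$ is $G$-equivariant and $\omega_{\mathfrak{X}/T}$ is $G$-linearized, $\rho$ induces an action $\bar\rho$ on $\mathcal{E}$, and since each fibre $\mathcal{E}\otimes\mathbb{C}(t)=H^0(\mathfrak{X}_t,\omega^{k_0}_{\mathfrak{X}_t})$ carries a faithful $G$-representation ($\omega^{k_0}_{\mathfrak{X}_t}$ is very ample, so the induced embedding $\mathfrak{X}_t\hookrightarrow \mathbb{P}(\mathcal{E}\otimes\mathbb{C}(t))$ is $G$-equivariant), the pair $(\mathcal{E},\bar\rho)$ is a locally free $G$-sheaf in the sense of Definition \ref{def5}. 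Form $q:\mathcal{F}(\mathcal{E},G)\to T$ and set $((p_{\mathcal{F}}:\mathfrak{X}_{\mathcal{F}}\to\mathcal{F}(\mathcal{E},G)),G,\rho_{\mathcal{F}}):=q^*((p:\mathfrak{X}\to T),G,\rho)$, the pulled-back $G$-marked family (Definition \ref{def1}(5)); by the definition of $\mathsf{M}^G_h$ on morphisms this represents $\mathsf{M}^G_h(q)((\mathfrak{X}\to T),G,\rho)$ and again lies in $\mathsf{M}^G_h(\mathcal{F}(\mathcal{E},G))$.

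The key step is to identify $(p_{\mathcal{F}})_*(\omega^{k_0}_{\mathfrak{X}_{\mathcal{F}}/\mathcal{F}})$, together with the $G$-action induced by $\rho_{\mathcal{F}}$, with $(q^*\mathcal{E},q^*\bar\rho)$. Because $\omega_{\mathfrak{X}/T}$ is invertible, $p$ is flat, and the higher cohomology of $\omega^{k_0}$ on the fibres vanishes, the formation of $p_*(\omega^{k_0}_{\mathfrak{X}/T})$ commutes with the base change $q$; and this identification is $G$-equivariant because $q$ is $G$-equivariant with trivial action on the base. One may now argue on each connected component of $T$ separately. If $T$ is connected and $\mathcal{E}$ has decomposition type $\beta$, then $\mathcal{F}(\mathcal{E},G)$ is a connected component of the total $G$-frame bundle on which, by Proposition \ref{prop2}, $\phi_{\mathcal{E},G}$ exhibits $(q^*\mathcal{E},q^*\bar\rho)=((p_{\mathcal{F}})_*(\omega^{k_0}_{\mathfrak{X}_{\mathcal{F}}/\mathcal{F}}),\text{ induced action})$ as a \emph{free} $G$-sheaf of decomposition type $\beta$ that is defined over $\mathbb{C}$ in the sense of Definition \ref{def9}.

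Hence the family $((p_{\mathcal{F}}:\mathfrak{X}_{\mathcal{F}}\to\mathcal{F}(\mathcal{E},G)),G,\rho_{\mathcal{F}})$ satisfies exactly the hypotheses of Proposition \ref{prop4}: the pushforward of its relative $k_0$-canonical sheaf is trivial and carries a $G$-action defined over $\mathbb{C}$. Applying Proposition \ref{prop4} (on each connected component of $T$, using the trivialization $\phi_{\mathcal{E},G}$ to produce the embedding into $\mathcal{F}(\mathcal{E},G)\times\mathbb{P}^N$ with $G$ acting through $\beta$ on $\mathbb{P}^N$, and hence factoring the classifying map through $H^{G,\beta}_{N,h'}$) yields a morphism $f_{\mathfrak{X}/T,k_0,G}:\mathcal{F}(\mathcal{E},G)\to H^G_{N,h'}$ with $((\mathfrak{X}_{\mathcal{F}}\to\mathcal{F}(\mathcal{E},G)),G,\rho_{\mathcal{F}})\simeq f_{\mathfrak{X}/T,k_0,G}^*((U^G_{N,h'}\to H^G_{N,h'}),G,\mathcal{B}_N)$ and $\omega^{k_0}_{\mathfrak{X}_{\mathcal{F}}/\mathcal{F}}\simeq\mathcal{O}_{\mathcal{F}(\mathcal{E},G)\times\mathbb{P}^N}(1)|_{\mathfrak{X}_{\mathcal{F}}}$. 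Reassembling over the connected components of $T$ (recall $\mathcal{F}(\mathcal{E},G)$ is by Definition \ref{def8} the disjoint union of the corresponding pieces, and $H^G_{N,h'}=\bigsqcup_{[\beta]}H^{G,\beta}_{N,h'}$) gives the statement, since $\mathsf{M}^G_h(q)((\mathfrak{X}\to T),G,\rho)$ is precisely the class of $((\mathfrak{X}_{\mathcal{F}}\to\mathcal{F}(\mathcal{E},G)),G,\rho_{\mathcal{F}})$.

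The only genuinely delicate point I expect is the $G$-equivariant base-change identification in the second paragraph, i.e.\ checking that $q^*p_*(\omega^{k_0}_{\mathfrak{X}/T})\cong (p_{\mathcal{F}})_*(\omega^{k_0}_{\mathfrak{X}_{\mathcal{F}}/\mathcal{F}})$ respecting the $G$-linearizations; this is where the interaction of Cohomology and Base Change with the group action must be handled carefully. Once that compatibility is recorded, the remainder of the argument is a direct application of Propositions \ref{prop2} and \ref{prop4} together with the bookkeeping of connected components and decomposition types.
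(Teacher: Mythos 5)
Your proposal is correct and follows exactly the route the paper takes: the paper's entire proof of this corollary is the single line ``Combining (\ref{prop2}) with (\ref{prop4})'', and your argument --- pulling the family back along $q$, identifying $(p_{\mathcal{F}})_*(\omega^{k_0}_{\mathfrak{X}_{\mathcal{F}}/\mathcal{F}})$ with $(q^*\mathcal{E},q^*\bar\rho)$ by cohomology and base change, invoking Proposition \ref{prop2} to see the action is defined over $\mathbb{C}$, and then applying Proposition \ref{prop4} componentwise --- is precisely the intended combination, spelled out in more detail than the paper itself provides.
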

\begin{remark}\label{CanM}
Given an isomorphism $ ((p:\mathfrak{X}_1\rightarrow T),G,\rho_1)\simeq  ((p:\mathfrak{X}_2\rightarrow T),G,\rho_2)$, we have an induced isomorphism $l:p_*(\omega^{k_0}_{\mathfrak{X}_1/T})\rightarrow p_*(\omega^{k_0}_{\mathfrak{X}_2/T})$ of $G$-sheaves on $T$ and both $p_*(\omega^{k_0}_{\mathfrak{X}_1/T})$ and $p_*(\omega^{k_0}_{\mathfrak{X}_2/T})$ have decomposition type $\beta$. Then $l$ induces a $C(G,\beta)$-equivariant isomorphism: $l_\mathcal{F}:\mathcal{F}(p_*(\omega^{k_0}_{\mathfrak{X}_1/T}),G)\rightarrow \mathcal{F}(p_*(\omega^{k_0}_{\mathfrak{X}_2/T}),G)$. From (\ref{Can}), (\ref{CanG}) and the proof of (\ref{prop4}) we have that $f_{\mathfrak{X}_1/T,k_0,G}= f_{\mathfrak{X}_2/T,k_0,G}\circ\l_\mathcal{F}$.  
\end{remark}
We have already shown that $\mathsf{M}^G_h$ is bounded (in the sense of \ref{cor1}). However in general $H^G_{N,h'}$ may not be a parametrizing space for $\mathsf{M}^G_h$, i.e., some fibre of $((U_{N,h'}^G\rightarrow H^{G}_{N,h'}),G,\mathcal{B}_N)$ may not be a canonical model. We will see that the set of points in $H^G_{N,h'}$ over which the fibre is a Gorenstein canonical model forms a locally closed subscheme $\bar{H}^G_{N,h'}$. In general such problems correspond to studying the local closedness of the moduli functor.
\begin{definition}[\cite{Kov09}, 5.C] \label{localclose}
A moduli functor of polarized varieties $\mathsf{M}$ is called {\em locally closed} if the following condition holds: For every flat family of polarized varieties $(\mathfrak{X}\rightarrow T,\mathfrak{L})$, there exists a locally closed subscheme $i:T'\hookrightarrow T$ such that if $f:S\rightarrow T$ is any morphism then $f^*(\mathfrak{X}\rightarrow T,\mathfrak{L})\in \mathsf{M}(S)$ iff $f$ factors through $T'$.
\end{definition}
Here we do not state a general "$G$-version" of local closedness, but only consider the case of Hilbert schemes. For a general discussion, see \cite{Kol08}, Corollary 24.
\begin{prop}\label{prop5}
Using the same notations as in (\ref{prop4}), there exists a locally closed subscheme $\bar{H}^G_{N,h'}$ of $H^G_{N,h'}$, satisfying the following conditions:\\
(1) $((\bar{U}_{N,h'}^G\rightarrow \bar{H}^{G}_{N,h'}),G,\mathcal{B}_N):=((U_{N,h'}^G\rightarrow H^{G}_{N,h'}),G,\mathcal{B}_N)|_{\bar{H}^G_{N,h'}}\in \mathsf{M}^G_h(\bar{H}^G_{N,h'})$.\\
(2) The morphism $f$ that we obtained in (\ref{prop4}) factors through $\bar{H}^G_{N,h'}$.
\end{prop}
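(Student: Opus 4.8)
The plan is to reduce the $G$-equivariant statement to the already-known local closedness of the functor $\mathsf{M}_h$ (the case of trivial $G$), applied to the underlying family on the Hilbert scheme. Recall that $\mathsf{M}_h$ is locally closed in the sense of \ref{localclose}: this is the standard statement that inside any flat projective family the locus where the fibre is a Gorenstein canonical model with the right Hilbert polynomial and invertible relative dualizing sheaf restricting to the polarization is locally closed (see \cite{Kol08}, Corollary 24, or \cite{Vie95}). First I would apply this to the family $U^G_{N,h'}\to H^G_{N,h'}$, forgetting the $G$-marking: there is a locally closed subscheme $\bar{H}^G_{N,h'}\hookrightarrow H^G_{N,h'}$ characterized by the property that, for any morphism $g:S\to H^G_{N,h'}$, the pulled-back family $g^*(U^G_{N,h'}\to H^G_{N,h'})$ lies in $\mathsf{M}_h(S)$ (with the polarization $\mathcal{O}(1)$) if and only if $g$ factors through $\bar{H}^G_{N,h'}$; moreover one must check that the condition $\omega^{k_0}_{\mathfrak{X}_s}\simeq\mathcal{O}(1)|_{\mathfrak{X}_s}$ holds over $\bar{H}^G_{N,h'}$, which is part of the cut-out conditions by \ref{def3} and the construction of $H_{N,h'}$.

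Next I would promote this to a $G$-marked statement. The key point is that the $G$-action on $U^G_{N,h'}$ (coming from $\mathcal{B}_N$, i.e.\ from $\pi_2^*\beta$ on each component $H^{G,\beta}_{N,h'}$) is already present on the whole Hilbert-scheme family, so restricting to $\bar{H}^G_{N,h'}$ automatically restricts the $G$-marking as well; the restricted triple $((\bar{U}^G_{N,h'}\to\bar{H}^G_{N,h'}),G,\mathcal{B}_N)$ is then a $G$-marked family in the sense of \ref{def1}(3), because every fibre over $\bar{H}^G_{N,h'}$ is a canonical model (by construction of $\bar{H}^G_{N,h'}$) carrying the faithful $G$-action $\beta$, and the relative dualizing sheaf is invertible with the prescribed Hilbert polynomial. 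This gives (1): $((\bar{U}^G_{N,h'}\to\bar{H}^G_{N,h'}),G,\mathcal{B}_N)\in\mathsf{M}^G_h(\bar{H}^G_{N,h'})$.

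For (2), I would argue that the morphism $f:T\to H^G_{N,h'}$ produced in \ref{prop4} satisfies $f^*(U^G_{N,h'}\to H^G_{N,h'})\simeq(\mathfrak{X}\to T)$ as a family, and by hypothesis $(\mathfrak{X}\to T)$ underlies an element of $\mathsf{M}^G_h(T)$, hence in particular of $\mathsf{M}_h(T)$ with polarization $\omega^{k_0}_{\mathfrak{X}/T}\simeq\mathcal{O}(1)|_{\mathfrak{X}}$ (the last isomorphism also from \ref{prop4}). Therefore by the universal property defining $\bar{H}^G_{N,h'}$, the morphism $f$ must factor through $\bar{H}^G_{N,h'}$. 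The same remark applies verbatim to the morphism $f_{\mathfrak{X}/T,k_0,G}$ from \ref{cor1}, so the frame-bundle version factors through $\bar{H}^G_{N,h'}$ as well.

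**The main obstacle** I anticipate is not the logical skeleton above but the careful bookkeeping that the local closedness condition for $\mathsf{M}_h$ really does cut out exactly the locus we want: one needs that "all fibres are Gorenstein canonical models, $\omega_{\mathfrak{X}/T}$ invertible, and $\chi(\mathfrak{X}_t,\omega^k_{\mathfrak{X}_t})=h(k)$ for all $k$'' together with the compatibility $\omega^{k_0}_{\mathfrak{X}/T}\simeq\mathcal{O}(1)|_{\mathfrak{X}}$ form a locally closed condition — invertibility of $\omega_{\mathfrak{X}/T}$ is an open condition, having canonical singularities is open-and-closed-type behaviour that requires the results of Kollár and Viehweg, ampleness of $K_{\mathfrak{X}_t}$ and the Hilbert polynomial constraints are further locally closed conditions, and the polarization-compatibility is cut out by a further locally closed condition on the Picard scheme. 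Once it is granted (as we may, citing \cite{Kol08} and \cite{Vie95}) that the intersection of all these conditions defines a single locally closed subscheme $\bar{H}^G_{N,h'}\subset H^G_{N,h'}$ with the stated universal property, the $G$-equivariant upgrade is essentially formal, since the $G$-action was built into the family from the start and is preserved under every base change and restriction.
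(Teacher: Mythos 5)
Your proposal is correct and follows essentially the same route as the paper: the paper also reduces to the trivial-$G$ case (where local closedness comes from the closedness of the locus $\omega^{k_0}_{U_{N,h'}/H_{N,h'}}\simeq\mathcal{O}(1)$ on fibres and the openness of being Gorenstein canonical, citing Mumford and Elkik), defines $\bar{H}^{G,\beta}_{N,h'}$ as the intersection $\bar{H}_{N,h'}\cap H^{G,\beta}_{N,h'}$ --- which coincides with your pullback of the universal locally closed subscheme to $H^{G,\beta}_{N,h'}$ --- and then observes that the $G$-marking restricts automatically and that (2) follows from the same universal property. The only cosmetic difference is that you invoke the functorial characterization of local closedness directly over $H^G_{N,h'}$, while the paper works inside $H_{N,h'}$ first and intersects.
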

\begin{proof}
In the case where $G$ is trivial the existence of $\bar{H}_{N,h'}$ follows from the facts that the subset  $\{x\in H_{N,h'}|\ (\omega^{k_0}_{U_{N,h'}/H_{N,h'}})_x\simeq (\mathcal{O}_{H_{N,h'}\times\mathbb{P}^N}(1)|_{U_{N,h'}})_x\}$ is closed in $H_{N,h'}$ (cf.\cite{Mum70}, II.5, Corollary 6) and being canonical and Gorenstein is an open property (cf.\cite{Elk81}).\\
In general we set $\bar{H}^{G,\beta}_{N,h'}:=\bar{H}_{N,h'}\bigcap H^{G,\beta}_{N,h'}$ 
and $\bar{H}^G_{N,h'}:= \bigsqcup \bar{H}^{G,\beta}_{N,h'}$. For condition (1), the fact that $(\bar{U}_{N,h'}\rightarrow \bar{H}_{N,h'})\in \mathsf{M}_h(\bar{H}_{N,h'})$ implies that $(\bar{U}_{N,h'}^G\rightarrow \bar{H}^{G}_{N,h'})\in \mathsf{M}_h(\bar{H}^{G}_{N,h'})$, now taking the action of $G$ into account, we have that $((\bar{U}_{N,h'}^G\rightarrow \bar{H}^{G}_{N,h'}),G,\mathcal{B}_N)\in \mathsf{M}^G_h(\bar{H}^G_{N,h'})$. Condition (2) is satisfied for similar reasons. 
\end{proof}
\begin{remark}\label{rk4}
(1) Given $(X_1,G,\rho_1),(X_2,G,\rho_2)\in \mathsf{M}^G_h(\mathbf{Spec}\mathbb{C})$ such that $H^0(\omega^{k_0}_{X_1})$ and $H^0(\omega^{k_0}_{X_2}) $ have the same decomposition type $\beta$, by (\ref{prop5}) there exist $f_i:\mathbf{Spec}(\mathbb{C})\rightarrow \bar{H}^{G,\beta}_{N,h'}$ such that $(X_i,G,\rho_i)\simeq\mathsf{M}^G_h(f_i)((\bar{U}_{N,h'}^{G,\beta}\rightarrow \bar{H}^{G,\beta}_{N,h'}),G,\beta)$ for $i=1,2$.\\
From the proof of (\ref{prop4}) we saw that $X_1$ and $X_2$ are isomorphic as $G$-marked varieties $\iff$ $\exists g\in C(G,\beta)$ such that $f_1(\mathbf{Spec}(\mathbb{C}))=\Psi_gf_2(\mathbf{Spec}(\mathbb{C}))$. \\
(2) Notations as in (\ref{cor1}). Assume that $T$ is connected and $p_*(\omega^{k_0}_{\mathfrak{X}/T})$ has decomposition type $\beta$ and denote by $\Psi'$ the action of $C(G,\beta)$ on $\mathcal{F}(p_*(\omega^{k_0}_{\mathfrak{X}/T}),G)$. From the proof of (\ref{prop2}) we see that $\forall g\in C(G,\beta)$, $\Psi'_g \times \Phi_{g}$ leaves $q^*\mathfrak{X}\simeq f_{\mathfrak{X}/T,k_0,G}^*(\bar{U}^{G,\beta}_{N,h'})$ invariant as a subscheme of $\mathcal{F}(p_*(\omega^{k_0}_{\mathfrak{X}/T}),G)\times \mathbb{P}^N$, i.e., $(\Psi'_g \times \Phi_{g})f_{\mathfrak{X}/T,k_0,G}^*(\bar{U}^{G,\beta}_{N,h'})=f_{\mathfrak{X}/T,k_0,G}^*(\bar{U}^{G,\beta}_{N,h'})$. This implies that 
$$(\Psi'_g \times id)f_{\mathfrak{X}/T,k_0,G}^*(\bar{U}^{G,\beta}_{N,h'})=f_{\mathfrak{X}/T,k_0,G}^*((id\times \Phi_{g^{-1}})(\bar{U}^{G,\beta}_{N,h'}))=f_{\mathfrak{X}/T,k_0,G}^*((\Psi_g\times id)(\bar{U}^{G,\beta}_{N,h'})).$$ Therefore we conclude that the morphism obtained in (\ref{cor1}), $$f_{\mathfrak{X}/T,k_0,G}:\mathcal{F}(p_*(\omega^{k_0}_{\mathfrak{X}/T}),G)\rightarrow \bar{H}^{G,\beta}_{N,h'},$$ is $C(G,\beta)$-equivariant.
\end{remark} 

\section{The Construction of $\mathfrak{M}_h[G]$}
In section 3 we have obtained a parametrizing space $\bar{H}^G_{N,h'}$ for the moduli functor $\mathsf{M}^G_h$, now we construct $\mathfrak{M}_h[G]$ as a quotient space of $\bar{H}^G_{N,h'}$ and show that it is the coarse moduli scheme for $\mathsf{M}^G_h$.\\
In (\ref{rk3}) we have seen that the group $C(G,\beta)$ acts on $H^{G,\beta}_{N,h'}$: it is clear that the subscheme $\bar{H}^{G,\beta}_{N,h'}$ is invariant under this action. The first goal of this section is to show that the quotient $\bar{H}^{G,\beta}_{N,h'}/C(G,\beta)$ exists (as a scheme).\\
Setting $SC(G,\beta):=SL(N+1,\mathbb{C})\bigcap C(G,\beta)$, it is easy to see that $\bar{H}^{G,\beta}_{N,h'}/C(G,\beta)\simeq \bar{H}^{G,\beta}_{N,h'}/SC(G,\beta)$ (if one of them exists).
Therefore from now on we consider $\bar{H}^{G,\beta}_{N,h'}/SC(G,\beta)$ instead. (It is not difficult to show that $SC(G,\beta)$ is reductive.)
\begin{lemma}\label{lem1}
$SC(G,\beta)$ acts properly on  $\bar{H}^{G,\beta}_{N,h'}$ and $\forall x\in \bar{H}^{G,\beta}_{N,h'}$, the stabilizer subgroup $Stab(x)$ is finite.
\end{lemma}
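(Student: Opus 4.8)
The plan is to establish the lemma by proving that the action morphism
$$\sigma: SC(G,\beta)\times \bar{H}^{G,\beta}_{N,h'}\longrightarrow \bar{H}^{G,\beta}_{N,h'}\times \bar{H}^{G,\beta}_{N,h'},\qquad (g,x)\mapsto (gx,x)$$
is proper; by definition this is exactly the properness of the action, and the finiteness of the stabilizers then comes for free, since $\mathrm{Stab}(x)$ is isomorphic to the fibre $\sigma^{-1}(x,x)$ — hence proper over $\mathbb{C}$ — while at the same time it is a closed subscheme of the affine group $SC(G,\beta)$, and a scheme which is both proper and affine is finite. Since $SC(G,\beta)\times \bar{H}^{G,\beta}_{N,h'}$ is of finite type and separated over $\mathbb{C}$, the morphism $\sigma$ is automatically of finite type and separated, so by the valuative criterion it remains only to prove that $\sigma$ is universally closed. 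Concretely one must solve the following lifting problem: given a discrete valuation ring $R$ with fraction field $K$, two $R$-points $x,y\in \bar{H}^{G,\beta}_{N,h'}(R)$ and $g_K\in SC(G,\beta)(K)$ with $g_K\cdot y_K=x_K$, produce $g_R\in SC(G,\beta)(R)$ restricting to $g_K$ and satisfying $g_R\cdot y=x$. The argument mirrors the classical proof that $PGL(N+1,\mathbb{C})$ acts properly on the Hilbert scheme of $k_0$-canonically embedded canonical models (cf. \cite{Vie95}).

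It is also worth recording the direct proof of finiteness of stabilizers, independent of $\sigma$. If $x\in \bar{H}^{G,\beta}_{N,h'}$ corresponds to the $G$-marked canonical model $(X\subset\mathbb{P}^N,G,\beta)$, with $X$ embedded by the complete system $|\omega^{k_0}_X|$, then any $g\in \mathrm{Stab}(x)$ preserves $X$ and, as $g\in C(G,\beta)$ commutes with the $G$-action, induces a $G$-equivariant automorphism of $X$; this gives a homomorphism $\mathrm{Stab}(x)\to \Aut(X,G,\beta)$. Its kernel consists of $g\in SC(G,\beta)$ acting trivially on the non-degenerate subvariety $X\subset\mathbb{P}^N$, hence trivially on $\mathbb{P}^N$, so $g$ is a scalar matrix $\lambda\cdot\mathrm{Id}$ with $\lambda^{N+1}=1$; thus the kernel lies in $\mu_{N+1}$. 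Since $X$ is a canonical model, $\Aut(X)$ — and a fortiori $\Aut(X,G,\beta)$ — is finite (the automorphism group of a projective variety with ample canonical class is finite; cf. \cite{Vie95}, or reduce to the smooth general type case), so $\mathrm{Stab}(x)$ is finite.

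For universal closedness, write $\mathfrak{X},\mathfrak{Y}\subset\mathbb{P}^N_R$ for the $G$-marked families over $\mathrm{Spec}\,R$ classified by $x$ and $y$; their fibres are $k_0$-canonically embedded canonical models, $\mathcal{O}_{\mathbb{P}^N_R}(1)|_{\mathfrak{X}}\simeq\omega^{k_0}_{\mathfrak{X}/R}$, and $G$ acts through $\beta$. Since $g_K\in C(G,\beta)(K)$, it identifies $\mathfrak{Y}_K$ with $\mathfrak{X}_K$ inside $\mathbb{P}^N_K$ $G$-equivariantly and compatibly with the polarizations. By the separatedness of the moduli functor of Gorenstein canonical models — i.e. the uniqueness of relative canonical models over a trait (cf. \cite{Vie95}) — this isomorphism extends to an isomorphism $\Phi:\mathfrak{X}\to\mathfrak{Y}$ of families over $R$, which by separatedness of $\mathfrak{Y}$ and density of the generic point is again $G$-equivariant and carries $\omega^{k_0}_{\mathfrak{X}/R}$ to $\omega^{k_0}_{\mathfrak{Y}/R}$. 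Via the trivializations $p_*\omega^{k_0}_{\mathfrak{X}/R}\simeq R^{N+1}\simeq p_*\omega^{k_0}_{\mathfrak{Y}/R}$ built into $x$ and $y$, the map induced by $\Phi$ on global sections is a matrix $M\in C(G,\beta)(R)\subset GL(N+1,R)$. Now $[M]$ and $[g_K]$ both carry $\mathfrak{Y}_K$ onto $\mathfrak{X}_K$, so $[M]^{-1}[g_K]$ corresponds to an automorphism $h_K$ of the polarized canonical model $\mathfrak{X}_K$; applying separatedness once more to the pair $(\mathfrak{X},\mathfrak{X})$, $h_K$ extends to an automorphism of $\mathfrak{X}/R$, and its effect on $p_*\omega^{k_0}_{\mathfrak{X}/R}\simeq R^{N+1}$ shows $[M]^{-1}[g_K]=[h_K]\in PGL(N+1,R)$, whence $[g_K]\in PGL(N+1,R)$. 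To lift back to $SL(N+1)$, choose $\widetilde{g}\in GL(N+1,R)$ representing $[g_K]$ and write $g_K=\lambda^{-1}\widetilde{g}$ with $\lambda\in K^\times$; then $\det g_K=1$ forces $\lambda^{N+1}=\det\widetilde{g}\in R^\times$, so $\lambda$ is integral over the normal ring $R$, hence $\lambda\in R^\times$ and $g_K\in SL(N+1,R)$. Since $C(G,\beta)$ is closed in $GL(N+1)$ and $g_K\in C(G,\beta)(K)$, we obtain $g_R:=g_K\in SC(G,\beta)(R)$; finally $g_R\cdot y$ and $x$ are $R$-points of the separated Hilbert scheme agreeing over $K$, so $g_R\cdot y=x$, which completes the valuative criterion.

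The main obstacle is the separatedness input — extending the generic isomorphism of $G$-marked canonical models to an isomorphism of the full families over $R$, and checking that this extension stays $G$-equivariant and polarized so that, after passing to global sections, it lands in $C(G,\beta)(R)$. The finiteness of $\Aut$ of a canonical model is the other indispensable ingredient; everything else (the bookkeeping between $SL(N+1)$ and $PGL(N+1)$, the integrality argument for $\lambda$, and the formal parts of the valuative criterion) is routine. A shorter but less self-contained route would be to quote directly the known properness of the $PGL(N+1,\mathbb{C})$-action on the Hilbert scheme of canonically embedded canonical models and then argue that, since $\bar{H}^{G,\beta}_{N,h'}$ lies in the $\beta(G)$-fixed locus and the two reference points are $G$-marked, the lift it produces is forced to lie in $SC(G,\beta)$.
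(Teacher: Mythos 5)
Your proposal is correct, but it takes a much longer road than the paper does. The paper's entire proof is a two-line reduction to the known $G$-trivial case: $SL(N+1,\mathbb{C})$ acts properly on $\bar{H}_{N,h'}$ with finite stabilizers (Viehweg, via separatedness of the moduli functor of canonical models), $SC(G,\beta)$ is a closed subgroup of $SL(N+1,\mathbb{C})$, and $\bar{H}^{G,\beta}_{N,h'}$ is a closed invariant subscheme of $\bar{H}_{N,h'}$; since the restriction of the graph morphism $(g,x)\mapsto(gx,x)$ to a closed subscheme of the source, landing in a closed subscheme of the target, stays proper, and stabilizers only shrink, the lemma follows. This is exactly the ``shorter but less self-contained route'' you mention in your closing sentence — and note that with it the $G$-equivariance bookkeeping disappears entirely, because one never needs the lifted group element to lie in $C(G,\beta)$: one restricts a properness statement rather than re-running the valuative criterion. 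What your version buys is a self-contained account of where the properness actually comes from (uniqueness of relative canonical models over a trait, extension of the generic isomorphism, the $PGL$-to-$SL$ integrality step), carried out with the $G$-action in place; it is essentially a $G$-equivariant rewrite of Viehweg's argument, and it is sound. Two small remarks: $[M]^{-1}[g_K]$ is an automorphism of $\mathfrak{Y}_K$ rather than of $\mathfrak{X}_K$ (harmless, but worth fixing), and your direct finiteness argument via $\mathrm{Stab}(x)\to\Aut(X,G,\beta)$ with kernel in the scalars $\mu_{N+1}$ is a nice supplement that the paper does not spell out.
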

\begin{proof}
In the case where $G$ is trivial the lemma is known by studying the separatedness of the corresponding functor (cf. \cite{Vie95}, 7.6, 8.21; \cite{Kov09}, 5.D). Now since $SC(G,\beta)$ is a closed subgroup of $SL(N+1,\mathbb{C})$ and $\bar{H}^{G,\beta}_{N,h'}$ is a closed subscheme of $\bar{H}_{N,h'}$ which stays invariant under the action of $SC(G,\beta)$, the lemma follows immediately.
\end{proof}
In order to apply Geometric Invariant theory, we have to find an $SC(G,\beta)$-linearized invertible sheaf on $\bar{H}^{G,\beta}_{N,h'}$ and verify certain stability conditions (cf. \cite{MF82}, Chap.1). \\
Let us first look at the case where $G$ is trivial: let $p:\bar{U}_{N,h'}\rightarrow \bar{H}_{N,h'}$ be the universal family and 
define $$\lambda_{k_0}:=\det(p_*(\omega^{k_0}_{\bar{U}_{N,h'}/\bar{H}_{N,h'}})).$$ A result of Viehweg (see \cite{Vie95}, 7.17) states that $\lambda_{k_0}$ admits an $SL(N+1,\mathbb{C})$-linearization and $$\bar{H}_{N,h'}=(\bar{H}_{N,h'})^s(\lambda_{k_0}),$$ where $(\bar{H}_{N,h'})^s(\lambda_{k_0})$ denotes the set of $SL(N+1,\mathbb{C})$-stable points with respect to $\lambda_{k_0}$. Then it is easy to obtain the following proposition:
\begin{prop}
There exists a geometric quotient $(\mathfrak{M}^{G,\beta}_{k_0,h},\pi_{\beta})$ of $\bar{H}^{G,\beta}_{N,h'}$ by $SC(G,\beta)$, moreover:\\
(1) The quotient map $\pi_{\beta}: \bar{H}^{G,\beta}_{N,h'}\rightarrow \mathfrak{M}^{G,\beta}_{k_0,h}$ is an affine morphism.\\
(2) There exists an ample invertible sheaf $\mathcal{L}$ on $\mathfrak{M}^{G,\beta}_{k_0,h}$ such that $\pi_{\beta}^*\mathcal{L}\simeq (\lambda^{G,\beta}_{k_0})^n$ for some $n>0$, where setting $p_\beta:=p|_{\bar{U}_{N,h'}^{G,\beta}}:\bar{U}_{N,h'}^{G,\beta}\rightarrow\bar{H}^{G,\beta}_{N,h'},$  $\lambda^{G,\beta}_{k_0}:=\det((p_\beta)_*(\omega^{k_0}_{\bar{U}_{N,h'}^{G,\beta}/\bar{H}^{G,\beta}_{N,h'}}))$.
\end{prop}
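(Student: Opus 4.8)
The plan is to obtain the geometric quotient by descending Viehweg's GIT result from $\bar H_{N,h'}$ to the fixed-point subscheme $\bar H^{G,\beta}_{N,h'}$. First I would recall from the discussion preceding the proposition that $\lambda_{k_0} = \det(p_*(\omega^{k_0}_{\bar U_{N,h'}/\bar H_{N,h'}}))$ carries an $SL(N+1,\mathbb{C})$-linearization with $\bar H_{N,h'} = (\bar H_{N,h'})^s(\lambda_{k_0})$. Restricting this linearization along the closed immersion $\bar H^{G,\beta}_{N,h'} \hookrightarrow \bar H_{N,h'}$, which is equivariant for the inclusion $SC(G,\beta)\hookrightarrow SL(N+1,\mathbb{C})$, one checks that $\lambda_{k_0}|_{\bar H^{G,\beta}_{N,h'}}$ is naturally isomorphic to $\lambda^{G,\beta}_{k_0}=\det((p_\beta)_*(\omega^{k_0}_{\bar U^{G,\beta}_{N,h'}/\bar H^{G,\beta}_{N,h'}}))$; here I would invoke cohomology and base change (as already used before Proposition \ref{prop4}) to see that forming $p_*\omega^{k_0}$ commutes with the base change $\bar H^{G,\beta}_{N,h'}\hookrightarrow \bar H_{N,h'}$, so the determinant line bundles agree and the $SC(G,\beta)$-linearization on $\lambda^{G,\beta}_{k_0}$ is the restriction of the $SL(N+1,\mathbb{C})$-linearization on $\lambda_{k_0}$.

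Next I would pass to stability. Since every point of $\bar H_{N,h'}$ is $SL(N+1,\mathbb{C})$-stable, and stability is inherited by closed subgroups acting on invariant subschemes with the restricted linearization — more precisely, the Hilbert–Mumford numerical criterion for a one-parameter subgroup $\mu$ of $SC(G,\beta)$ is computed inside $SL(N+1,\mathbb{C})$ where it already yields stability, and Lemma \ref{lem1} guarantees finite stabilizers and properness — I conclude $\bar H^{G,\beta}_{N,h'} = (\bar H^{G,\beta}_{N,h'})^s(\lambda^{G,\beta}_{k_0})$ for the $SC(G,\beta)$-action. By the fundamental theorem of GIT (\cite{MF82}, Chap.~1, Theorem 1.10 and Converse 1.12), there is a geometric quotient $\pi_\beta:\bar H^{G,\beta}_{N,h'}\to \mathfrak{M}^{G,\beta}_{k_0,h}$, and because the whole space is stable, $\mathfrak{M}^{G,\beta}_{k_0,h}$ is quasi-projective and carries an ample line bundle $\mathcal{L}$ with $\pi_\beta^*\mathcal{L}\simeq (\lambda^{G,\beta}_{k_0})^{\otimes n}$ for some $n>0$ (the $n$ absorbs the power needed to make the linearized bundle descend, via Kempf's descent lemma, using that stabilizers act trivially on the fibers — which holds since on a stable point the finite stabilizer acts on the fiber of a suitable power through a character that one arranges to be trivial).

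For assertion (1), affineness of $\pi_\beta$: I would argue that on the locus where a line bundle is stable the quotient map by a reductive group with finite stabilizers is affine — concretely, $\mathfrak{M}^{G,\beta}_{k_0,h}$ is covered by the affine opens $(\mathfrak{M}^{G,\beta}_{k_0,h})_s$ for $s\in H^0((\lambda^{G,\beta}_{k_0})^{\otimes m})^{SC(G,\beta)}$, whose preimages are the $SC(G,\beta)$-invariant affine opens $(\bar H^{G,\beta}_{N,h'})_s$; this is exactly the structure of the GIT quotient in the stable (= semistable with closed orbits and finite stabilizers) case. Alternatively one transports this from the known affineness in the trivial-$G$ case (cf. \cite{Vie95}, 7.17 and the surrounding discussion) along the closed immersion.

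The main obstacle is the descent of the linearized line bundle to the quotient — i.e., verifying the hypotheses of Kempf's descent lemma, namely that for a point $x$ with finite stabilizer $\mathrm{Stab}(x)\subset SC(G,\beta)$, the stabilizer acts trivially on the fiber $(\lambda^{G,\beta}_{k_0})^{\otimes n}_x$ for suitable $n$. In the trivial-$G$ case Viehweg handles precisely this (the passage from $\lambda_{k_0}$ to a power that descends); I expect the same argument applies verbatim since $\mathrm{Stab}(x)$ as a subgroup of $SL(N+1,\mathbb{C})$ acts on $(\lambda_{k_0})_x$ through a character whose order is bounded, so replacing $\lambda^{G,\beta}_{k_0}$ by a uniform power kills all these characters. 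Everything else — equivariance of the closed immersion, compatibility of the linearizations via base change, and the numerical criterion — is routine bookkeeping inherited from the $G$-trivial situation established by Viehweg.
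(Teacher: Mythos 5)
Your proposal is correct and follows essentially the same route as the paper: identify $\lambda^{G,\beta}_{k_0}$ with the restriction of $\lambda_{k_0}$ via base change (the paper additionally cites \cite{HK04}, Lemma 2.6 for the compatibility of the relative dualizing sheaves), restrict the $SL(N+1,\mathbb{C})$-linearization to $SC(G,\beta)$, deduce stability of every point from the Hilbert--Mumford criterion using that one-parameter subgroups of $SC(G,\beta)$ live in $SL(N+1,\mathbb{C})$ and that $\bar{H}^{G,\beta}_{N,h'}$ is closed, and conclude by \cite{MF82}, Theorem 1.10. The extra detail you supply on Kempf descent and affineness is subsumed in the paper's appeal to standard GIT.
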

\begin{proof}
Noting that $\omega^{k_0}_{\bar{U}_{N,h'}/\bar{H}_{N,h'}}|_{\bar{U}^{G,\beta}_{N,h'}}\simeq \omega^{k_0}_{\bar{U}_{N,h'}^{G,\beta}/\bar{H}^{G,\beta}_{N,h'}}$ (cf. \cite{HK04}, Lemma 2.6) and applying "cohomology and base change", we have that $\lambda^{G,\beta}_{k_0}\simeq\lambda_{k_0}|_{\bar{H}^{G,\beta}_{N,h'}}.$ Since $\bar{H}^{G,\beta}_{N,h'}$ (as a subscheme of $\bar{H}_{N,h'}$) is invariant under the $SC(G,\beta)$-action, the $SL(N+1,\mathbb{C})$-linearization of $\lambda_{k_0}$ induces a natural $SC(G,\beta)$-linearization of $\lambda^{G,\beta}_{k_0}$. By Lemma (\ref{lem1}), we have that $SL(N+1,\mathbb{C})$ acts properly on $\bar{H}_{N,h'}$ and $SC(G,\beta)$ acts properly on $\bar{H}^{G,\beta}_{N,h'}$. Noting that a one-parameter subgroup $\mu:\mathbb{C}^*\rightarrow SC(G,\beta)$ is also a subgroup of $SL(N+1,\mathbb{C})$ and that $\bar{H}^{G,\beta}_{N,h'}$ is closed in $\bar{H}_{N,h'}$, we see that for any $x\in \bar{H}^{G,\beta}_{N,h'}$, $\lim_{t\rightarrow 0}(\mu(t)x)$ exists in $\bar{H}^{G,\beta}_{N,h'}$ if and only if it exists in $\bar{H}_{N,h'}$. Now by applying the Hilbert-Mumford criterion (cf. \cite{MF82}, Theorem 2.1), we see that  $$(\bar{H}_{N,h'})^s(\lambda_{k_0})=\bar{H}_{N,h'}\Rightarrow (\bar{H}^{G,\beta}_{N,h'})^s(\lambda^{G,\beta}_{k_0})=\bar{H}^{G,\beta}_{N,h'}.$$ Then the proposition follows from standard GIT methods (cf. \cite{MF82}, Theorem 1.10).
\end{proof}
We are ready to prove the main theorem (\ref{thm}):
\begin{proof}[Proof of (\ref{thm})]
We set 
\begin{equation}\label{eq1}
\mathfrak{M}_h[G]:=\bigsqcup_{[\beta]\in\mathcal{B}_N}\mathfrak{M}^{G,\beta}_{k_0,h}
\end{equation}
 (note that if $\mathsf{M}^G_h(\mathbf{Spec}\mathbb{C})=\emptyset$ then $\mathfrak{M}_h[G]=\emptyset$).\\
Let us make the following convention: for any natural transformation $\theta:\mathsf{M}^G_h \rightarrow \Hom(-,Q)$, scheme $T$ and $[((p:\mathfrak{X}\rightarrow T),G,\rho)]\in \mathsf{M}^G_h(T)$, we write $\theta_T(\mathfrak{X})$ or simply $\theta(\mathfrak{X})$ as an abbreviation for $\theta_T([((p:\mathfrak{X}\rightarrow T),G,\rho)])$.\\
Step 1. Construction of a natural transformation $\eta:\mathsf{M}^G_h\rightarrow \Hom(-,\mathfrak{M}_h[G])$:\\
Given $T$ a scheme and $((p:\mathfrak{X}\rightarrow T),G,\rho)\in \mathsf{M}^G_h(T)$, it suffices to define $\eta$ on each connected component of $T$, hence we assume furthermore that $T$ is connected. We have the bundle of $G$-frames of $p_*(\omega^{k_0}_{\mathfrak{X}/T})$ over $T$, $q:\mathcal{F}(p_*(\omega^{k_0}_{\mathfrak{X}/T}),G)\rightarrow T$. By (\ref{cor1}) and (\ref{prop5}) there exists a  morphism $f_{\mathfrak{X}/T,k_0,G}:\mathcal{F}(p_*(\omega^{k_0}_{\mathfrak{X}/T}),G)\rightarrow \bar{H}^{G,\beta}_{N,h'}$ such that $$\mathsf{M}^G_h(q)((\mathfrak{X}\rightarrow T),G,\rho)\simeq \mathsf{M}^G_h(f_{\mathfrak{X}/T,k_0,G})((\bar{U}_{N,h'}^{G,\beta}\rightarrow \bar{H}^{G,\beta}_{N,h'}),G,\beta)$$ for some $[\beta] \in \mathcal{B}_N$. Setting $$\bar{f}_{\mathfrak{X}/T,k_0,G}:=\pi_{\beta}\circ f_{\mathfrak{X}/T,k_0,G}:\mathcal{F}(p_*(\omega^{k_0}_{\mathfrak{X}/T}),G)\rightarrow \mathfrak{M}^{G,\beta}_{k_0,h},$$ 
 by (\ref{rk4}-2) we see that $\bar{f}_{\mathfrak{X}/T,k_0,G}$ is $C(G,\beta)$-equivariant (where we take the trivial action on $\mathfrak{M}^{G,\beta}_{k_0,h}$). Since $T$ is the quotient of $\mathcal{F}(p_*(\omega^{k_0}_{\mathfrak{X}/T}),G)$ by $C(G,\beta)$ (cf. \ref{prop3}), there exists a (unique) morphism $\eta_T(\mathfrak{X}): T\rightarrow \mathfrak{M}_h[G]$ such that $\bar{f}_{\mathfrak{X}/T,k_0,G}= \eta_T(\mathfrak{X})\circ q$. Note that by (\ref{CanM}) $\eta_T(\mathfrak{X})$ is independent of the representative family $((p:\mathfrak{X}\rightarrow T),G,\rho)$ that we choose, hence $\eta_T(\mathfrak{X})$ is well defined. \\
 In order to show that $\eta$ is a natural transformation, let $l\in \Hom(S,T)$ and $((p:\mathfrak{X}\rightarrow T),G,\rho)\in \mathsf{M}^G_h(T)$: it suffices to show that $\eta_S(\mathfrak{X}_S)=\eta_T(\mathfrak{X})\circ l$. Without loss of generality we assume that $S$ and $T$ are connected and $p_*(\omega^{k_0}_{\mathfrak{X}/T})$ has decomposition type $\beta$, now considering the following commutative diagram: 
$$\begin{CD}
\mathcal{F}((p_{S})_*(\omega^{k_0}_{\mathfrak{X}_S/S}),G) @>\tilde{l}>>   \mathcal{F}(p_*(\omega^{k_0}_{\mathfrak{X}/T}),G)\\ 
@VVq_SV                           @VVqV\\
S               @>l>>             T
\end{CD},$$
from (\ref{Can}-1) and (\ref{CanG}) we see that $\bar{f}_{\mathfrak{X}_S/S,k_0,G}=\bar{f}_{\mathfrak{X}/T,k_0,G}\circ \tilde{l}$. Since $\bar{f}_{\mathfrak{X}_S/S,k_0,G}$,
$\bar{f}_{\mathfrak{X}/T,k_0,G}$ and $ \tilde{l}$ are all $C(G,\beta)$-equivariant, hence we have $\eta_S(\mathfrak{X}_S)=\eta_T(\mathfrak{X})\circ l$ by (\ref{prop3}). 
\\
Step 2. $\mathfrak{M}_h[G]$ is the coarse moduli scheme for $\mathsf{M}^G_h$:\\
(1) $\eta_{\mathbf{Spec}\mathbb{C}}$ induces a one-to-one correspondence between $\mathsf{M}^G_h(\mathbf{Spec}\mathbb{C})$ and the set of (closed) points of $\mathfrak{M}_h[G]$.\\
Surjectivity follows from (\ref{prop5}), and injectivity follows from (\ref{rk4}-1). \\
(2) The universal property of $\eta$.\\
Let $\theta:\mathsf{M}^G_h\rightarrow \Hom(-,Q)$ be another natural transformation: we show that there exists a unique morphism $\gamma:\mathfrak{M}_h[G]\rightarrow Q$ such that $\theta=\Hom(\gamma)\circ \eta$.\\
For any $[\beta] \in \mathcal{B}_N$, the universal family $((\bar{U}_{N,h'}^{G,\beta}\rightarrow \bar{H}^{G,\beta}_{N,h'}),G,\beta)\in \mathsf{M}^G_h(\bar{H}^{G,\beta}_{N,h'})$ induces a morphism $\theta_{\bar{H}^{G,\beta}_{N,h'}}(\bar{U}_{N,h'}^{G,\beta}): \bar{H}^{G,\beta}_{N,h'}\rightarrow Q$. For any $g\in C(G,\beta)$, we have that 
$$(\Psi_g\times id_{\mathbb{P}^N})((\bar{U}_{N,h'}^{G,\beta}\rightarrow \bar{H}^{G,\beta}_{N,h'}),G,\beta)= (id_{\bar{H}^{G,\beta}_{N,h'}}\times \Phi_{g^{-1}})((\bar{U}_{N,h'}^{G,\beta}\rightarrow \bar{H}^{G,\beta}_{N,h'}),G,\beta)$$ 
as subschemes of $\bar{H}^{G,\beta}_{N,h'}\times \mathbb{P}^N$, noting that the right hand side is isomorphic to $((\bar{U}_{N,h'}^{G,\beta}\rightarrow \bar{H}^{G,\beta}_{N,h'}),G,\beta)$ as $G$-marked families, we see that $$\theta_{\bar{H}^{G,\beta}_{N,h'}}(\bar{U}_{N,h'}^{G,\beta})=\theta_{\bar{H}^{G,\beta}_{N,h'}}(\bar{U}_{N,h'}^{G,\beta})\circ \Psi_g.$$ This implies that $\theta_{\bar{H}^{G,\beta}_{N,h'}}(\bar{U}_{N,h'}^{G,\beta})$ is $C(G,\beta)$-equivariant, hence it induces a (unique) morphism $\gamma_{\beta}:\mathfrak{M}^{G,\beta}_{k_0,h}\rightarrow Q$ such that $\theta_{\bar{H}^{G,\beta}_{N,h'}}(\bar{U}_{N,h'}^{G,\beta})=\gamma_{\beta}\circ \eta_{\bar{H}^{G,\beta}_{N,h'}}(\bar{U}_{N,h'}^{G,\beta})$. Now we can define $\gamma:\mathfrak{M}_h[G]\rightarrow Q$ such that the restriction of $\gamma$ to each $\mathfrak{M}^{G,\beta}_{k_0,h}$ is $\gamma_{\beta}$. \\
From the construction of $\gamma$ we already saw that $\gamma$ must be unique, it remains to show that $\theta=\Hom(\gamma)\circ \eta$. Given $((p:\mathfrak{X}\rightarrow T),G,\rho)\in \mathsf{M}^G_h(T)$, let $q:\mathcal{F}(p_*(\omega^{k_0}_{\mathfrak{X}/T}),G)\rightarrow T$ be the bundle of $G$-frames of $p_*(\omega^{k_0}_{\mathfrak{X}/T})$, we assume again that $T$ is connected and $p_*(\omega^{k_0}_{\mathfrak{X}/T})$ has decomposition type $\beta$. By (\ref{cor1}) and  (\ref{prop5}) there exists $f_{\mathfrak{X}/T,k_0,G}:\mathcal{F}(p_*(\omega^{k_0}_{\mathfrak{X}/T}),G)\rightarrow \bar{H}^{G,\beta}_{N,h'}$ such that $$\mathsf{M}^G_h(q)((\mathfrak{X}\rightarrow T),G,\rho)\simeq\mathsf{M}^G_h(f_{\mathfrak{X}/T,k_0,G})((\bar{U}_{N,h'}^{G,\beta}\rightarrow \bar{H}^{G,\beta}_{N,h'}),G,\beta),$$ hence we have that $$\theta(q^*\mathfrak{X})=\theta_{\bar{H}^{G,\beta}_{N,h'}}(\bar{U}_{N,h'}^{G,\beta})\circ f_{\mathfrak{X}/T,k_0,G}=\gamma_{\beta}\circ \eta_{\bar{H}^{G,\beta}_{N,h'}}(\bar{U}_{N,h'}^{G,\beta})\circ f_{\mathfrak{X}/T,k_0,G}= \gamma_{\beta}\circ \eta(q^*\mathfrak{X}),$$ where the first and third equalities hold since $\theta$ and $\eta$ are natural transformations, the second equality holds by the construction of $\gamma_{\beta}$. Finally the fact that $f_{\mathfrak{X}/T,k_0,G}$ and $\theta_{\bar{H}^{G,\beta}_{N,h'}}(\bar{U}_{N,h'}^{G,\beta})$ are $C(G,\beta)$-equivariant $\Rightarrow$ $\theta(q^*\mathfrak{X})$ is also $C(G,\beta)$-equivariant. By (\ref{prop3}) $\exists !$ $l'\in \Hom(T,Q)$ such that $\theta(q^*\mathfrak{X})=l'\circ q$, which implies that $\theta_T(\mathfrak{X})=l'=\gamma_{\beta}\circ \eta_T(\mathfrak{X})$.
\end{proof}
As an application of our results, we show that the locus $\mathfrak{M}_h(G)$ inside $\mathfrak{M}_h$ of varieties which admit an effective action by a group $G$ is closed. This has been proven in \cite{Cat83}, Theorem 1.8 for the case of surfaces, the idea there generalizes naturally to higher dimensional cases.\\
Given a faithful representation $\beta:G\rightarrow GL(N+1,\mathbb{C})$, we have a natural inclusion $i_\beta:\bar{H}^{G,\beta}_{N,h'}\subset \bar{H}_{N,h'}$. Noting that the restriction of the quotient map $\pi:\bar{H}_{N,h'}\rightarrow \mathfrak{M}_h$ to $\bar{H}^{G,\beta}_{N,h'}$ is $SC(G,\beta)$-equivariant, we obtain an induced morphism $u^{G,\beta}_{k_0,h}:\mathfrak{M}^{G,\beta}_{k_0,h}\rightarrow \mathfrak{M}_h$. We define a morphism $u^G_h:\mathfrak{M}_h[G]\rightarrow \mathfrak{M}_h$ such that $u^G_h|_{\mathfrak{M}^{G,\beta}_{k_0,h}}=u^{G,\beta}_{k_0,h}$. We denote by $\mathfrak{M}_h(G)$ the (scheme-theoretic) image of $u^G_h$ in $\mathfrak{M}_h$. Then we can interpret the problem into showing that $u^G_h$ maps $\mathfrak{M}_h[G]$ surjectively onto $\mathfrak{M}_h(G)$.
\begin{corollary}
The morphism $u^G_h:\mathfrak{M}_h[G]\rightarrow \mathfrak{M}_h$ is finite and maps $\mathfrak{M}_h[G]$ surjectively onto $\mathfrak{M}_h(G)$; $\mathfrak{M}_h(G)$ is a closed subscheme of $\mathfrak{M}_h$.
\end{corollary}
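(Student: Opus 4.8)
Recall that $\mathfrak{M}_h[G]=\bigsqcup_{[\beta]\in\mathcal{B}_N}\mathfrak{M}^{G,\beta}_{k_0,h}$ is a \emph{finite} disjoint union, that $u^G_h$ restricts on $\mathfrak{M}^{G,\beta}_{k_0,h}$ to $u^{G,\beta}_{k_0,h}$, that a finite disjoint union of finite morphisms to a common target is finite, and that the scheme-theoretic image of a finite (in particular quasi-compact, indeed closed) morphism is a closed subscheme whose underlying space is the set-theoretic image. Hence it is enough to show that each $u^{G,\beta}_{k_0,h}:\mathfrak{M}^{G,\beta}_{k_0,h}\to\mathfrak{M}_h$ is finite; granting this, $u^G_h$ is finite, the scheme-theoretic image $\mathfrak{M}_h(G)$ is a closed subscheme of $\mathfrak{M}_h$, and $u^G_h$ maps $\mathfrak{M}_h[G]$ onto it. Since a quasi-finite proper morphism is finite, I would prove (i) that $u^{G,\beta}_{k_0,h}$ is quasi-finite and (ii) that it is proper.

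\textbf{(i) Quasi-finiteness.} Fix a point $[X]\in\mathfrak{M}_h$; since $K_X$ is ample, $X$ is of general type, so $\Aut(X)$ is a finite group. By the one-to-one correspondence of Step 2 in the proof of Theorem \ref{thm} (bijectivity of $\eta_{\mathbf{Spec}\mathbb{C}}$ on the $\beta$-component), a point of $\mathfrak{M}^{G,\beta}_{k_0,h}$ lying over $[X]$ is an isomorphism class $[(X,G,\rho)]$ with $\rho$ an injective homomorphism $G\hookrightarrow\Aut(X)$ of decomposition type $\beta$, and two such are isomorphic exactly when they differ by $\Aut(X)$-conjugation. Thus the fibre $(u^{G,\beta}_{k_0,h})^{-1}([X])$ is identified with a quotient of the set of injective homomorphisms $G\hookrightarrow\Aut(X)$, which is finite because $\Aut(X)$ is finite. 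As $u^{G,\beta}_{k_0,h}$ is a morphism of schemes of finite type over $\mathbb{C}$ with finite fibres, it is quasi-finite.

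\textbf{(ii) Properness.} Since $\mathfrak{M}^{G,\beta}_{k_0,h}$ is a separated scheme, $u^{G,\beta}_{k_0,h}$ is separated and uniqueness in the valuative criterion is automatic, so it suffices to check universal closedness; there I may test on a discrete valuation ring $R$ with fraction field $K$ and, for the \emph{existence} of the lift, replace $R$ by an extension of discrete valuation rings. So suppose given compatible morphisms $\mathbf{Spec}(K)\to\mathfrak{M}^{G,\beta}_{k_0,h}$ and $\mathbf{Spec}(R)\to\mathfrak{M}_h$. Using the surjectivity of $\bar H_{N,h'}\to\mathfrak{M}_h$ (enlarge $R$ along a curve through a chosen point of $\bar H_{N,h'}\times_{\mathfrak{M}_h}\mathbf{Spec}(R)$ lying over the closed point), the map $\mathbf{Spec}(R)\to\mathfrak{M}_h$ is induced by a family $(\mathfrak{X}\to\mathbf{Spec}(R))\in\mathsf{M}_h(\mathbf{Spec}(R))$; after a further enlargement the map $\mathbf{Spec}(K)\to\mathfrak{M}^{G,\beta}_{k_0,h}$ is induced by a $G$-marked family $(\mathfrak{X}_K,G,\rho_K)$ of type $\beta$ whose underlying family we identify with $\mathfrak{X}\otimes_R K$. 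The crucial step is to extend the action over $R$: for each $g\in G$ the automorphism $\rho_K(g)$ of $\mathfrak{X}\otimes_R K$, regarded as an isomorphism between the generic fibres of two copies of the $R$-family $\mathfrak{X}$, extends uniquely to an $R$-automorphism $\rho_R(g)$ of $\mathfrak{X}$ by the separatedness of the moduli functor of Gorenstein canonical models --- the input underlying Lemma \ref{lem1} (cf. \cite{Vie95}, 7.6, 8.21). Uniqueness of these extensions makes $\rho_R$ a homomorphism $G\to\Aut_R(\mathfrak{X})$, and it is fibrewise faithful: the induced $G$-representation on the locally free sheaf $p_*(\omega^{k_0}_{\mathfrak{X}/R})$ has constant decomposition type over the connected base (as $R[G]\cong\mathbb{C}[G]\otimes_{\mathbb{C}}R$ is a product of matrix algebras), hence the representation on $H^0(\mathfrak{X}_0,\omega^{k_0}_{\mathfrak{X}_0})$ is again of type $\beta$; since $\bar H^{G,\beta}_{N,h'}\neq\emptyset$, no non-identity element of $G$ acts on $\mathbb{C}^{N+1}$ by a homothety (such an element would act trivially on the $k_0$-canonically embedded variety, violating faithfulness), so faithfulness on $H^0$ forces faithfulness on $\mathfrak{X}_0$. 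Therefore $(\mathfrak{X},G,\rho_R)\in\mathsf{M}^G_h(\mathbf{Spec}(R))$ is of type $\beta$, and the morphism $\mathbf{Spec}(R)\to\mathfrak{M}^{G,\beta}_{k_0,h}$ it induces is the desired lift.

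\textbf{Main obstacle.} The heart of the proof is step (ii), and inside it the extension of the faithful $G$-action from the generic fibre to the whole family over a discrete valuation ring; this is exactly where the separatedness (equivalently, the properness of the relevant group actions) established around Lemma \ref{lem1} is indispensable. The other ingredients --- working over the finite set $\mathcal{B}_N$, the bookkeeping that turns maps to the coarse moduli spaces into genuine families after enlarging $R$, and the finiteness of $\Aut(X)$ for $X$ of general type --- are routine, and once $u^G_h$ is known to be finite the assertions about $\mathfrak{M}_h(G)$ being a closed subscheme surjected onto by $u^G_h$ are formal.
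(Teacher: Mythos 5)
Your proposal is correct, and its skeleton (finite $=$ quasi-finite $+$ proper, with quasi-finiteness from $|\Aut(X)|<\infty$ and properness via the valuative criterion after lifting the maps to the parameter scheme $\bar{H}_{N,h'}$) is the same as the paper's. The one place where you genuinely diverge is the crucial step of extending the faithful $G$-action across the special point. The paper stays inside the projective embedding: it builds a curve $B'$ in $\{(t,A(t))\mid A(t)\mathfrak{X}_t\in \bar{H}^{G,\beta}_{N,h'}\}\subset C\times GL(N+1,\mathbb{C})$, so that over the punctured base the family sits $\beta(G)$-invariantly in $B\times\mathbb{P}^N$, and then argues that the full family $i(\mathfrak{X}')$, being irreducible and containing the invariant dense open part, is itself invariant under the fixed linear action $\pi_2^*(\beta)$; the extension of the action is thus obtained ``for free'' from linearity on the ambient space plus irreducibility. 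You instead extend each $\rho_K(g)$ one at a time over the DVR by appealing to the separatedness of the moduli functor of canonical models (the Matsusaka--Mumford-type extension theorem underlying Lemma \ref{lem1}), and then check faithfulness of the limit action via the constancy of the decomposition type of $p_*(\omega^{k_0}_{\mathfrak{X}/R})$ over the connected base. Both mechanisms rest on the same deep input (separatedness of $\mathsf{M}_h$, quoted from Viehweg), but the paper's route is more self-contained given its setup, whereas yours is conceptually cleaner and does not need the auxiliary space $Z$ or the re-embedding by $A(t)$; your faithfulness check is also slightly more roundabout than necessary, since a $g$ acting trivially on $\mathfrak{X}_0$ acts trivially on $H^0(\omega^{k_0}_{\mathfrak{X}_0})$, directly contradicting faithfulness of $\beta$ without any discussion of homotheties.
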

\begin{proof}
It is easy to see that $u^G_h$ is quasi-finite: given a point $[X]\in \mathfrak{M}_h$, since $\Aut(X)$ is finite, then the set of injective homomorphisms $\rho:G\rightarrow \Aut(X)$ is finite, hence $(u^G_h)^{-1}([X])$, which corresponds to the set of isomorphism classes of $G$-markings on $X$, is also finite.\\
For the remaining statements, it suffices to show that $u^G_h$ is proper, which is equivalent to showing that $u^{G,\beta}_{k_0,h}:\mathfrak{M}^{G,\beta}_{k_0,h}\rightarrow \mathfrak{M}_h$ is proper for each $[\beta]\in \mathcal{B}_N$. Applying the valuative criterion of properness, we have to prove that for every pointed curve $(C,O)$ (not necessarily complete) and for any commutative diagram 
$$\begin{CD}
C^{\star} @>f'>>   \mathfrak{M}^{G,\beta}_{k_0,h}\\ 
@VViV                           @VVu^{G,\beta}_{k_0,h}V\\
C               @>f>>            \mathfrak{M}_h
\end{CD}$$
where $C^\star:=C-\{O\}$, there exists a unique $l:C\rightarrow \mathfrak{M}^{G,\beta}_{k_0,h}$ making the whole diagram commute.\\
By GIT we know that $\mathfrak{M}^{G,\beta}_{k_0,h}$ is quasi-projective and hence separated, therefore the uniqueness of $l$ is clear. For the existence of $l$, since $\pi_{\beta}: \bar{H}^{G,\beta}_{N,h'}\rightarrow \mathfrak{M}^{G,\beta}_{k_0,h}$ is a quotient map of quasi-projective schemes, it suffices to show that there exists a finite morphism $v:(B,O')\rightarrow (C,O)$ and a morphism $l':B\rightarrow \bar{H}^{G,\beta}_{N,h'}$ such that 
$$(*)\ u^{G,\beta}_{k_0,h}\circ \pi_\beta\circ l'=f\circ v\ and\ \pi_\beta\circ (l'|_{B^\star})=f'\circ (v|_{B^\star}),$$ where $B^\star:=B-\{O'\}$.\\
Considering the quotient map $\pi:\bar{H}_{N,h'}\rightarrow \mathfrak{M}_h$, we can assume without loss of generality that we have a morphism $m:C\rightarrow \bar{H}_{N,h'}$ such that $f=\pi \circ m$. Then we obtain a family $(\mathfrak{X}\rightarrow C):=m^*(\bar{U}_{N,h'})\in \mathsf{M}_h(C)$ such that $\mathfrak{X}\subset C\times \mathbb{P}^N$. The idea of constructing the morphism $v:(B,O')\rightarrow (C,O)$ is similar to that of (\ref{prop2}). We consider first the subspace 
$$Z:=\{(t,A(t)|A(t)\mathfrak{X}_t\ corresponds\ to\ a\ point\ in\ \bar{H}^{G,\beta}_{N,h'}\}\subset C\times GL(N+1,\mathbb{C}).$$ By assumption we see that $p_1:Z-p^{-1}_1(O)\rightarrow C^\star$ is surjective, where $p_1:C\times GL(n+1,\mathbb{C})\rightarrow C$ is the projection onto the first factor, hence we can find a curve $B'$ inside $Z$, such that $p_1|_{B'}:B'\rightarrow C^\star$ is surjective. For similar reasons as in (\ref{prop2}), we get a $G$-marked family $((p_1|_{B'})^*\mathfrak{X}^\star\rightarrow B'),G,\beta)$, where $\mathfrak{X}^\star:=\mathfrak{X}-\mathfrak{X}_O$. After possibly taking the normalization of $B'$, we can extend the morphism $p_1|_{B'}$ to a morphism $v:(B,O')\rightarrow (C,O)$ and we see that $(((v|_{B^\star})^*\mathfrak{X}^\star\rightarrow B^\star),G,\beta)$ is a $G$-marked family, where $B^\star:=B-\{O'\}$. \\
We claim that the $G$-action on $(v|_{B^\star})^*\mathfrak{X}^\star\rightarrow B^\star$ can be extended to an action on $(\mathfrak{X}'\rightarrow B):=(v^*\mathfrak{X}\rightarrow B)$. Since $\omega^{k_0}_{\mathfrak{X}'/B}$ induces an embedding $i:\mathfrak{X}'\rightarrow B\times \mathbb{P}^N$, we see that the claim is equivalent to that $i(\mathfrak{X}')$ is invariant under the action $\pi_2^*(\beta)$, where $\pi_2: B\times \mathbb{P}^N\rightarrow \mathbb{P}^N$ is the projection onto the second factor. After possibly shrinking $B$, we can assume that $B$ is connected and hence $i(\mathfrak{X}') $ is irreducible. The fact that $((v|_{B^\star})^*\mathfrak{X}^\star\rightarrow B^\star),G,\beta)$ is a $G$-marked family implies that $i((v|_{B^\star})^*\mathfrak{X}^\star)$ is invariant under $\pi_2^*(\beta)$, now from the irreducibility of $i(\mathfrak{X}')$ we see that $i(\mathfrak{X}')$ is also invariant under the action $\pi_2^*(\beta)$.\\
Now we have a $G$-marked family $((\mathfrak{X}'\rightarrow B),G,\beta)$, by (\ref{prop4}) we obtain a morphism $l':B \rightarrow \bar{H}^{G,\beta}_{N,h'}$, it is easy to check that $l'$ satisfies $(*)$.
\end{proof}
\section{Decompositions of $\mathfrak{M}_h[G]$ }
In the proof of theorem \ref{thm} (cf. \ref{eq1}) we saw that $\mathfrak{M}_h[G]$ has a decomposition which depends upon the choice of a sufficiently large natural number $k$: $$\mathcal{D}^G_{k,h}:~\mathfrak{M}_h[G]=\bigsqcup_{[\beta]\in\mathcal{B}_N}\mathfrak{M}^{G,\beta}_{k,h}.$$
Since given a $G$-marked family $((\mathfrak{X}\rightarrow T),G,\rho)$ over a connected base $T$, the induced $G$-representations on $H^0(\omega^k_{\mathfrak{X}_t})$ are all isomorphic to each other for any $t\in T$ (cf. \cite{Cat13}, Prop 37), each component of the decomposition is a union of connected components of $\mathfrak{M}_h[G]$.
\begin{definition}\label{CanDecomp}
$(1)$ Given a space $X$ with two decompositions $~\mathcal{D}_1:X=\bigsqcup_{i\in I}Y_i$ and $\mathcal{D}_2:X=\bigsqcup_{j\in J}W_j$, where each $Y_i$, $W_j$ is a union of connected components of $X$, their minimal refinement is defined as: $$\mathcal{D}_1\cap \mathcal{D}_2: X=\bigsqcup_{(i,j)\in I\times J}Y_i\cap W_j.$$
$(2)$ The {\em canonical representation type decomposition} of $\mathfrak{M}_h[G]$ is the minimal refinement of all the above decompositions:
$$\mathcal{D}_h[G]:=\cap_{k\in \mathcal{K}}\mathcal{D}^G_{k,h},$$
where $\mathcal{K}$ denotes the set of natural numbers satisfying Matsusaka's big theorem (cf. \cite{Mat86}, Theorem 2.4).
\end{definition}
\begin{remark}
Since $\mathfrak{M}_h[G]$ is a quasi-projective scheme, we see immediately that there exists 
a minimal natural number $N(h,G)$ and integers $k_1,...,k_{N(h,G)}$ such that
 $$\mathcal{D}_h[G]=\cap^{N(h,G)}_{i=1}\mathcal{D}^G_{k_i,h},$$
\end{remark}

\noindent
Several natural questions arise:\\
\underline{Question $1$.} What is an explicit bound for $N(h,G)$? \\
\underline{Question $2$.} Are the components of $\mathcal{D}_h[G]$ connected? or how many connected components do they have?\\

To answer question $1$, we provide first a method which works in general,  the main idea is to consider suitable Hilbert resolutions of the canonical rings of varieties with a fixed Hilbert polynomial $h$ (cf. \cite{Cat92}, Section 2). Then we look at the case of algebraic curves and obtain a more precise bound.\\
Since the functor $\mathsf{M}_h$ is bounded, there exists a minimal natural number $m=m(h)$ such that $\forall X\in \mathsf{M}_h(\mathbf{Spec}(\mathbb{C}))$, $H^i(X,\omega^m_X)=0$ for any $i>0$ and the $m$-th pluricanonical map of $X$, $\phi_m:X\rightarrow \mathbb{P}^n$ , is an embedding, where $n:=h(m)-1$. Recall that the canonical ring of $X$ is:
$$\mathcal{R}=\mathcal{R}(X,\omega_X):=\bigoplus_{k\geq 0}H^0(X,\omega^k_X)$$
Since $\omega_X$ is ample, $\mathcal{R}$ is a finite graded module over the graded ring $\mathcal{A}:=Sym(H^0(X,\omega^m_X))$. The degree $k$ direct summand of $\mathcal{R}$ (resp. $\mathcal{A}$) is denoted by $\mathcal{R}_k$ (resp. $\mathcal{A}_k$).
\begin{remark}\label{actionRA}
Assuming a group $G$ acts on $X$, we have naturally induced actions on $\mathcal{R}$ and $\mathcal{A}$. It is easy to see that these actions are compatible in the following sense:\\
(1) $\forall k\in \mathbb{N}$, $\mathcal{A}_k$ (resp. $\mathcal{R}_k$) is a $G$-invariant subspace of $\mathcal{A}$ (resp. $\mathcal{R}$).\\
(2) $\forall g\in G,a_{k_1}\in \mathcal{A}_{k_1}$ and $a_{k_2}\in \mathcal{A}_{k_2}$, $g(a_{k_1}a_{k_2})=(ga_{k_1})(ga_{k_2})$ (the same holds for $\mathcal{R}$). \\
(3) $\forall g\in G, a_{k_1}\in \mathcal{A}_{k_1}$ and $u_{k_2}\in \mathcal{R}_{k_2}$, $g(a_{k_1}u_{k_2})=(ga_{k_1})(gu_{k_2})$.
\end{remark}
Denoting by $\delta$ the depth of $\mathcal{R}$ as an $\mathcal{A}$-module, by Hilbert's syzygy theorem we have a minimal free resolution of $\mathcal{R}$ of length $n+1-\delta$ (cf. \cite{Gre89}, Theorem 1.2):
$$0\rightarrow L_{n+1-\delta}\rightarrow L_{n-\delta}\rightarrow \dots \rightarrow L_1\rightarrow L_0\rightarrow \mathcal{R}\rightarrow 0.$$ 
Now taking the action of $G$ into account, we have the following:
\begin{lemma}\label{G_min_res}
Let $\mathcal{A}=\mathbb{C}[x_0,...,x_n]$ and let $\mathcal{M}$ be a finite graded $\mathcal{A}$-module. Assuming that we have actions of $G$ on $\mathcal{A}$ and $\mathcal{M}$ such that \ref{actionRA} $(1)$, $(2)$ and $(3)$ are satisfied, then there exists a minimal $G$-equivariant free resolution of $\mathcal{M}$:
$$0\rightarrow L_{n+1-\delta}\rightarrow L_{n-\delta}\rightarrow \dots \rightarrow L_1\rightarrow L_0\rightarrow \mathcal{M}\rightarrow 0,$$
where $\delta$ is the depth of $\mathcal{M}$ as an $\mathcal{A}$-module. Moreover, $L_i$ is a direct sum: $$L_i=\bigoplus_{\chi\in Irrchar(G)}\bigoplus^{s_\chi}_{j=1}\mathcal{A}(-n_{\chi,i,j})\otimes V_\chi,$$
where $Irrchar(G)$ denotes the set of irreducible characters of $G$ and $V_\chi$ is the irreducible representation associated to $\chi$.
\end{lemma}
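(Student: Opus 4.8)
### Proof Proposal

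The plan is to run the standard inductive construction of a minimal free resolution, but to carry out each step $G$-equivariantly by working in the category of graded $\mathcal{A}$-modules equipped with a compatible $G$-action (i.e. modules $\mathcal{M}$ with $g(a\cdot m) = (ga)(gm)$, as in \ref{actionRA}). The key structural input is that, since $G$ is finite and we are over $\mathbb{C}$, this category is semisimple in the relevant sense: every finite-dimensional $G$-representation decomposes as $\bigoplus_{\chi} V_\chi^{\oplus s_\chi}$, and a free $\mathcal{A}$-module that is $G$-stable degree-by-degree can be rewritten as $\bigoplus_{\chi,j} \mathcal{A}(-n_{\chi,i,j})\otimes V_\chi$ once one chooses, in each graded piece of a minimal generating set, a basis adapted to the isotypic decomposition.

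First I would set up the base case. Let $d$ be the minimal degree in which $\mathcal{M}$ is nonzero, or more generally consider the graded vector space $\mathcal{M}/\mathfrak{m}\mathcal{M}$ where $\mathfrak{m}=(x_0,\dots,x_n)$; by \ref{actionRA}(1),(3) this is a graded $G$-representation, hence decomposes as $\bigoplus_{\chi}\bigoplus_{j=1}^{s_\chi}(\text{one-dim.\ graded piece in degree }n_{\chi,0,j})\otimes V_\chi$. Lifting a homogeneous $G$-stable basis of $\mathcal{M}/\mathfrak{m}\mathcal{M}$ to $\mathcal{M}$ — which can be done $G$-equivariantly because the surjection $\mathcal{M}\twoheadrightarrow \mathcal{M}/\mathfrak{m}\mathcal{M}$ splits as $G$-representations in each degree and $\mathfrak{m}\mathcal{M}$ is $G$-stable by \ref{actionRA}(3) — produces $L_0=\bigoplus_{\chi,j}\mathcal{A}(-n_{\chi,0,j})\otimes V_\chi$ with a surjection $\varepsilon:L_0\to\mathcal{M}$. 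I give $L_0$ the $G$-action $g\cdot(a\otimes v)=(ga)\otimes(gv)$; by construction $\varepsilon$ is $G$-equivariant, and $\varepsilon$ is minimal (i.e. $\ker\varepsilon\subset\mathfrak{m}L_0$) because the chosen generators project to a basis mod $\mathfrak{m}$ — this is Nakayama as usual. The inductive step is identical: $\ker\varepsilon$ is again a finitely generated graded $\mathcal{A}$-module (Hilbert basis theorem) and it inherits a compatible $G$-action as a $G$-stable submodule of the $G$-module $L_0$, so one repeats the construction to get $L_1\to\ker\varepsilon$, and so on.

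Finally I would address length and the numerics. The resolution terminates after $n+1-\delta$ steps by the graded Auslander–Buchsbaum formula / Hilbert syzygy theorem (cf.\ \cite{Gre89}, Theorem 1.2) applied to the underlying (non-equivariant) module; the $G$-action does not change the length since forgetting the action gives exactly a minimal free resolution in the ordinary sense — minimality over $\mathcal{A}$ is a property of the differentials having entries in $\mathfrak{m}$, which is preserved. Hence each $L_i$ has the asserted form $L_i=\bigoplus_{\chi\in Irrchar(G)}\bigoplus_{j=1}^{s_\chi}\mathcal{A}(-n_{\chi,i,j})\otimes V_\chi$, with the $s_\chi$ and shifts depending on $i$.

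The routine points are the repeated applications of Nakayama and the Hilbert basis theorem; the one place that needs genuine care — and which I expect to be the main obstacle to write cleanly — is the \emph{equivariant lifting}: showing that a minimal homogeneous generating set of a $G$-module $\mathcal{M}$ can be chosen to span a $G$-subrepresentation of $\mathcal{M}$ (in each degree) mapping isomorphically onto the corresponding piece of $\mathcal{M}/\mathfrak{m}\mathcal{M}$, so that $L_0$ genuinely acquires the product $G$-action making $\varepsilon$ equivariant and minimal simultaneously. This uses complete reducibility of $G$-representations over $\mathbb{C}$ (Maschke) degree-by-degree, together with \ref{actionRA}(3) to know $\mathfrak{m}\mathcal{M}$ is $G$-stable; once this is phrased correctly for $\mathcal{M}=\ker(\text{previous differential})$ as well, the induction runs without further difficulty.
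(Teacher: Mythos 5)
Your proposal is correct and follows essentially the same route as the paper: both run the standard inductive construction of a minimal free resolution, using complete reducibility of $G$-representations over $\mathbb{C}$ to choose the generating spaces (equivalently, the lifts modulo $\mathfrak{m}\mathcal{M}$) as $G$-subrepresentations, endowing each $L_i$ with the diagonal action, and invoking Hilbert's syzygy theorem for the length $n+1-\delta$. The only cosmetic difference is that the paper builds the generators degree by degree (lifting the lowest nonzero graded piece of the successive cokernels) rather than splitting $\mathcal{M}/\mathfrak{m}\mathcal{M}$ all at once, which amounts to the same thing.
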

\begin{proof}
Since $\mathcal{M}$ is a finitely generated $\mathcal{A}$-module, there exists a minimal integer $k_1$ such that $\mathcal{M}_{k_1}\neq 0$. We have a natural $G$-equivariant $\mathcal{A}$-module morphism: 
$$\psi_1:\mathcal{A}(-k_1)\otimes \mathcal{M}_{k_1}\rightarrow \mathcal{M},$$ 
where the action on the left hand side is: $g(a\otimes m)=(ga)\otimes (gm)$. \\
Now $\mathcal{M}/Im(\psi_1)$ is again a finitely generated graded $\mathcal{A}$-module and $G$-module, hence we have $\bar{\eta_2}:\mathcal{A}(-{k_2})\otimes (\mathcal{M}/Im(\psi_1))_{k_2}\rightarrow \mathcal{M}/Im(\psi_1)$, which can be lifted to a $G$-equivalent homomorphism $\eta_2:\mathcal{A}(-{k_2})\otimes \mathcal{M}'_{k_2}\rightarrow \mathcal{M}$, where $k_2>k_1$ is the minimal integer such that $(\mathcal{M}/Im(\psi_1))_{k_2}\neq 0$, and $\mathcal{M}'_{k_2}$ is a $G$-invariant subspace of $\mathcal{M}_{k_2}$ which maps isomorphically onto $(\mathcal{M}/Im(\psi_1))_{k_2}$. We repeat the process and (since $\mathcal{M}$ is a finitely generated ) after a finite number of steps we obtain $L_0=\oplus^{l_0}_{\nu=1}\mathcal{A}(-k_\nu)\otimes \mathcal{M}'_{k_\nu}$ (we set $\mathcal{M}'_{k_1}=\mathcal{M}_{k_1}$) and a surjective $G$-equivariant morphism $d_0:L_0\twoheadrightarrow \mathcal{M}$. By decomposing $\mathcal{M}'_{k_j}$ into irreducible $G$-subspaces we get the promised form of $L_0$. From our construction, we see that $L_0$ is a finitely generated graded-$\mathcal{A}$-module and $G$-module satisfying \ref{actionRA} $(3)$.\\
We define $L_i$ and $d_i$ for $i\geq 1$ inductively: assuming that we already have $d_{i-1}:L_{i-1}\rightarrow L_{i-2}$ and $\ker(d_{i-1})$ is a finitely generated graded $\mathcal{A}$-module and $G$-module satisfying \ref{actionRA} $(3)$, we repeat the construction process of $d_0$ and get $d_i:L_i\twoheadrightarrow \ker(d_{i-1})$. By Hilbert's syzygy theorem we have $L_{n-\delta+2}=0$. From our construction it is clear that the resulting resolution is minimal.
\end{proof}
Setting $N'(h,G):=m+\max\{n_{\chi,i,j}\}$, from (\ref{G_min_res}) we have the following:
\begin{prop}
For any $k>N'(h,G)$, the $G$-representation on $\mathcal{R}_k$ is determined by the representations on $\mathcal{R}_1,...,\mathcal{R}_{N'(h,G)}$, hence $N(h,G)\leq N'(h,G)$.
\end{prop}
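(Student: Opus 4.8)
The plan is to convert the minimal $G$-equivariant free resolution of $\mathcal{R}$ supplied by Lemma~\ref{G_min_res} into an identity of $G$-equivariant Hilbert series in the representation ring $R(G)$, and then to observe that the resulting series is controlled by its coefficients in degrees $\le N'(h,G)$.

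First I would set up the relevant series. Recall that $\mathcal{A}=Sym(\mathcal{R}_m)$ is graded so that $\mathcal{R}_m=\mathcal{A}_m$, whence $\mathcal{A}_e\cong Sym^{e/m}(\mathcal{R}_m)$ when $m\mid e$ and $\mathcal{A}_e=0$ otherwise; in particular $[\mathcal{A}_e]\in R(G)$ is determined by $[\mathcal{R}_m]$ for every $e$. For a graded $G$-module $\mathcal{M}$, set $H_\mathcal{M}(q):=\sum_{k\ge 0}[\mathcal{M}_k]\,q^k\in R(G)[[q]]$. Taking the degree-$k$ piece of the resolution $0\to L_{n+1-\delta}\to\cdots\to L_0\to\mathcal{R}\to 0$ of Lemma~\ref{G_min_res} — an exact sequence of finite-dimensional $G$-representations — gives $[\mathcal{R}_k]=\sum_i(-1)^i[(L_i)_k]$ in $R(G)$; since $L_i=\bigoplus_{\chi,j}\mathcal{A}(-n_{\chi,i,j})\otimes V_\chi$ with all $n_{\chi,i,j}\le N'(h,G)-m$ (which is the definition of $N'(h,G)$), summing over $k$ yields
\[
H_\mathcal{R}(q)=H_\mathcal{A}(q)\cdot P(q),\qquad P(q):=\sum_i(-1)^i\sum_{\chi,j}q^{n_{\chi,i,j}}[V_\chi],
\]
with $P(q)$ a \emph{polynomial} of degree $\le N'(h,G)-m$. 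The degreewise exactness of the $G$-equivariant Koszul complex $Sym(\mathcal{R}_m)\otimes\Lambda^\bullet\mathcal{R}_m\to\mathbb{C}$ gives $H_\mathcal{A}(q)^{-1}=\sum_{p\ge 0}(-1)^p[\Lambda^p\mathcal{R}_m]\,q^{mp}$, so that
\[
P(q)=H_\mathcal{R}(q)\cdot\sum_{p\ge 0}(-1)^p[\Lambda^p\mathcal{R}_m]\,q^{mp}\quad\text{and}\quad H_\mathcal{R}(q)=P(q)\cdot\sum_{j\ge 0}[Sym^j\mathcal{R}_m]\,q^{mj}.
\]

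Next I would read off $[\mathcal{R}_k]$. The first identity shows each coefficient $P_d\in R(G)$ equals $\sum_{p\ge 0}(-1)^p[\Lambda^p\mathcal{R}_m]\,[\mathcal{R}_{d-mp}]$ (with $[\mathcal{R}_e]:=0$ for $e<0$ and $[\mathcal{R}_0]$ trivial), so for $0\le d\le N'(h,G)-m$ it is determined by $[\mathcal{R}_m]$ and by $[\mathcal{R}_1],\dots,[\mathcal{R}_{N'(h,G)-m}]$, while $P_d=0$ for $d>N'(h,G)-m$; the second identity then reads $[\mathcal{R}_k]=\sum_{j\ge 0}P_{k-mj}\,[Sym^j\mathcal{R}_m]$, a finite sum whose terms are all determined by $[\mathcal{R}_m]$ and $[\mathcal{R}_1],\dots,[\mathcal{R}_{N'(h,G)-m}]$. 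As $m\le N'(h,G)$, this expresses $[\mathcal{R}_k]$, for every $k$ and in particular for $k>N'(h,G)$, in terms of $[\mathcal{R}_1],\dots,[\mathcal{R}_{N'(h,G)}]$, which is the first assertion. For the bound $N(h,G)\le N'(h,G)$, recall from \cite{Cat13}, Proposition~37, that each piece of $\mathcal{D}^G_{k,h}$ is a union of connected components of $\mathfrak{M}_h[G]$; hence two components lie in the same piece of $\mathcal{D}^G_{k,h}$ exactly when the corresponding $[\mathcal{R}_k]$ coincide, and by the above, agreement of $[\mathcal{R}_1],\dots,[\mathcal{R}_{N'(h,G)}]$ forces agreement of all $[\mathcal{R}_k]$. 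Therefore $\bigcap_{k\in\mathcal{K},\ k\le N'(h,G)}\mathcal{D}^G_{k,h}$ refines every $\mathcal{D}^G_{k',h}$ and so coincides with $\mathcal{D}_h[G]$; being an intersection of at most $N'(h,G)$ of the decompositions $\mathcal{D}^G_{k,h}$, minimality of $N(h,G)$ yields $N(h,G)\le N'(h,G)$.

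The main obstacle — or rather, the point where the idea sits — is the passage through the Hilbert series: the individual summands $[(L_i)_k]$ depend on the full graded Betti data $\{n_{\chi,i,j}\}$, which varies with $X$ and is \emph{not} recoverable from the isomorphism classes $[\mathcal{R}_1],\dots,[\mathcal{R}_{N'(h,G)}]$ alone — it is only after regrouping the alternating sum as $P(q)=H_\mathcal{R}(q)\,H_\mathcal{A}(q)^{-1}$, and using that $N'(h,G)-m$ bounds all resolution degrees, that the dependence collapses onto the low-degree graded pieces. A subordinate point is that $N'(h,G)$ must be chosen uniformly over $\mathsf{M}_h(\mathbf{Spec}\,\mathbb{C})$; this is legitimate since $\mathsf{M}_h$ is bounded, so the graded Betti numbers of the $\mathcal{R}(X,\omega_X)$ and their degrees remain bounded across the finitely many bounding families.
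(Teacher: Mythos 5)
Your argument is correct, and it follows the route the paper intends: the paper states this proposition as an immediate consequence of Lemma \ref{G_min_res} without writing out a proof, and your computation with $G$-equivariant Hilbert series in $R(G)[[q]]$ (using the equivariant Koszul complex to invert $H_{\mathcal{A}}(q)$ and the bound $n_{\chi,i,j}\le N'(h,G)-m$ to truncate $P(q)$) is a complete and accurate way of carrying out that deduction, including the observation that $[\Lambda^p\mathcal{R}_m]$ and $[Sym^j\mathcal{R}_m]$ are determined by $[\mathcal{R}_m]$ alone. The only caveat, which the paper's own ``hence'' shares, is that the passage to $N(h,G)\le N'(h,G)$ tacitly assumes the low-degree representation types $[\mathcal{R}_k]$, $k\le N'(h,G)$, are all recorded by decompositions $\mathcal{D}^G_{k,h}$ with $k\in\mathcal{K}$, so this is not a defect of your proof relative to the paper's.
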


In order to find an explicit bound on $N(h,G)$, we estimate the integers $m$ and $\max\{n_{\chi,i,j}\}$ separately.\\

The problem of finding an effective bound on $m$ is the so called "effective Matsusaka problem". Koll{\'a}r has shown in \cite{Kol93} that $m\leq 2(d+3)(d+2)!(2+d)$, where $d:=deg(h)=dimX$. If we only consider canonically polarized manifolds, we have better results (cf. \cite{Dem96}, \cite{Siu02}): we would like to mention the result by Angehrn and Siu, they have shown that $m\leq (d+1)(d^2+d+4)/2+2$ (cf.\cite{AS95}).\\

To determine $\max\{n_{\chi,i,j}\}$, we recall first the notion of the Castelnuovo-Mumford regularity (cf. \cite{Mum66}, Lecture 14).
\begin{definition}
Let $\mathcal{F}$ be a coherent sheaf on $\mathbb{P}^n$: $\mathcal{F}$ is said to be  {\em $s$-regular} if $H^i(\mathbb{P}^n,\mathcal{F}(s-i))=0$ for all $i>0$, the {\em regularity} of $\mathcal{F}$ is the minimal natural number with this property.\\
The regularity of a graded $\mathcal{A}$-module $\mathcal{M}$ is the regularity of its associated sheaf $\widetilde{\mathcal{M}}$.
\end{definition}

Let $s$ be the regularity of $\mathcal{R}$ as an $\mathcal{A}$-module: we have the following inequalities.
\begin{lemma}
Notations as in (\ref{G_min_res}). For any $i,j$ and $\chi$, $i\leq n_{\chi,i,j}\leq i+s$.
\end{lemma}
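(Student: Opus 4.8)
The plan is to relate the integers $n_{\chi,i,j}$ appearing in the $G$-equivariant minimal free resolution of $\mathcal{R}$ to the Castelnuovo--Mumford regularity $s$ of $\mathcal{R}$, by ignoring the $G$-action entirely and invoking the classical facts about minimal free resolutions over $\mathcal{A}=\mathbb{C}[x_0,\dots,x_n]$. Indeed, forgetting the $G$-structure in the resolution of Lemma \ref{G_min_res} turns it into an ordinary minimal free resolution
\[
0\rightarrow L_{n+1-\delta}\rightarrow\cdots\rightarrow L_1\rightarrow L_0\rightarrow\mathcal{R}\rightarrow 0,
\]
where $L_i=\bigoplus_{p}\mathcal{A}(-a_{i,p})$ and the multiset $\{a_{i,p}\}$ is precisely $\{n_{\chi,i,j}\}$ (each summand $\mathcal{A}(-n_{\chi,i,j})\otimes V_\chi$ contributes $\dim V_\chi$ copies of the shift $n_{\chi,i,j}$). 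So it suffices to prove the shift bounds $i\leq a_{i,p}\leq i+s$ for an arbitrary (non-equivariant) minimal graded free resolution, and then read the conclusion off the decomposition of $L_i$.

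First I would establish the lower bound $n_{\chi,i,j}\geq i$. This is a standard consequence of minimality: since the differentials $d_i\colon L_i\to L_{i-1}$ have entries in the maximal ideal $(x_0,\dots,x_n)$, i.e.\ have no constant entries, each generator of $L_i$ must map to a combination of generators of $L_{i-1}$ with coefficients of degree $\geq 1$; hence the minimal generator degrees strictly increase along the resolution. Formally, $\min_p a_{i,p}>\min_p a_{i-1,p}$, and combined with $\min_p a_{0,p}=k_1\geq 1$ (the module $\mathcal{R}$ starts in degree $\geq 0$, and here $\mathcal{R}_0=\mathbb{C}$, so one should be slightly careful: $k_1=0$). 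This gives $a_{i,p}\geq a_{0,p_0}+i\geq 0+i=i$ after chasing the inequalities through. Actually the cleanest route is the well-known reformulation via $\mathrm{Tor}$: $n_{\chi,i,j}$ are the degrees in which $\mathrm{Tor}^{\mathcal{A}}_i(\mathcal{R},\mathbb{C})$ is nonzero, and for any finitely generated module the $i$-th Tor is concentrated in degrees $\geq i+(\text{lowest degree of }\mathrm{Tor}_0)$; here the Koszul complex computes $\mathrm{Tor}$ and the $i$-th term of the Koszul complex is generated in degree $\geq i$.

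For the upper bound $n_{\chi,i,j}\leq i+s$, I would invoke the classical characterization of Castelnuovo--Mumford regularity in terms of the Betti numbers of the minimal free resolution: $\mathcal{R}$ is $s$-regular if and only if $a_{i,p}\leq i+s$ for all $i$ and all $p$ (this is, e.g., in Eisenbud's book or the Bayer--Stillman characterization; Mumford's Lecture 14, cited in the paper, contains the key vanishing theorem). More precisely, $s=\mathrm{reg}(\mathcal{R})=\max_{i,p}(a_{i,p}-i)$, which immediately gives $a_{i,p}\leq i+s$, and in particular shows the bound is sharp for at least one pair $(i,p)$. Translating back through the decomposition $L_i=\bigoplus_{\chi}\bigoplus_{j=1}^{s_\chi}\mathcal{A}(-n_{\chi,i,j})\otimes V_\chi$ yields $i\leq n_{\chi,i,j}\leq i+s$ for every $\chi,i,j$, as claimed.

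The main obstacle, such as it is, is purely expository: one must be careful that the paper's resolution is genuinely \emph{minimal} (which Lemma \ref{G_min_res} asserts, via the construction picking minimal generator degrees at each stage) so that the non-equivariant resolution obtained by forgetting $G$ is also minimal — equivariant minimality implies ordinary minimality since the differentials already have no degree-zero part. Once that is noted, everything reduces to quoting the standard ``regularity via resolutions'' theorem. I do not expect any genuinely new estimate here; the lemma is a bookkeeping step that packages $\mathrm{reg}(\mathcal{R})$ so that $N'(h,G)=m+\max\{n_{\chi,i,j}\}$ can subsequently be bounded in terms of $m$ and $s$.
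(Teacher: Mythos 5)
Your argument is correct and is essentially the one the paper relies on: the paper gives no proof of its own here, deferring entirely to \cite{Cat92}, Section 2, and what is done there is exactly your reduction --- forget the $G$-action (equivariant minimality implies ordinary minimality since the differentials have entries in $(x_0,\dots,x_n)$), get the lower bound from minimality (or the Koszul computation of $\mathrm{Tor}_i(\mathcal{R},\mathbb{C})$), and get the upper bound from the characterization of Castelnuovo--Mumford regularity via the shifts in a minimal free resolution. The one point worth a sentence of care: the paper defines $s$ as the regularity of the associated \emph{sheaf} $\widetilde{\mathcal{R}}$, while the identity $\max_{i,p}(a_{i,p}-i)=\mathrm{reg}$ holds for the \emph{module} regularity; to get $a_{i,p}\leq i+s$ from sheaf regularity one should either invoke Mumford's theorem directly (an $s$-regular sheaf has $\bigoplus_k H^0(\widetilde{\mathcal{R}}(k))$ generated in degrees $\leq s$ with syzygies shifting by one at each step, which is how \cite{Cat92} proceeds), or check that the low-degree local cohomology of $\mathcal{R}$ contributes nothing beyond $s$ --- a harmless but nonvacuous verification given how $\mathcal{R}$ is graded over $\mathcal{A}=\mathrm{Sym}(H^0(\omega_X^m))$.
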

\begin{proof}
See \cite{Cat92}, Section 2.
\end{proof}
An immediate consequence is that
\begin{prop}
$\max\{n_{\chi,i,j}\}\leq s+n+1-\delta\leq s+n+1$.
\end{prop}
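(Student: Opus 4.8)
The plan is to read the bound directly off the two structural facts already assembled: the \emph{length} of the minimal $G$-equivariant free resolution produced in Lemma \ref{G_min_res}, together with the regularity estimate $i\le n_{\chi,i,j}\le i+s$ from the preceding lemma. First I would recall that Lemma \ref{G_min_res} gives a resolution
$$0\rightarrow L_{n+1-\delta}\rightarrow L_{n-\delta}\rightarrow \dots \rightarrow L_1\rightarrow L_0\rightarrow \mathcal{R}\rightarrow 0,$$
so the homological index $i$ appearing in the twists $n_{\chi,i,j}$ ranges only over $0,1,\dots,n+1-\delta$ (this is the Auslander--Buchsbaum identity $\mathrm{pd}_{\mathcal{A}}(\mathcal{R})+\delta=n+1$, since $\mathcal{A}=\mathbb{C}[x_0,\dots,x_n]$). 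Hence every twist that actually occurs satisfies $n_{\chi,i,j}\le i+s\le (n+1-\delta)+s$ by the previous lemma, and taking the maximum over all admissible $i,j,\chi$ yields $\max\{n_{\chi,i,j}\}\le s+n+1-\delta$.

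For the second inequality I would simply observe that $\delta$, being the depth of the nonzero $\mathcal{A}$-module $\mathcal{R}$, is a nonnegative integer, so $s+n+1-\delta\le s+n+1$. That completes the argument.

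There is essentially no serious obstacle here: the proposition is a bookkeeping consequence of results already in place. The one point deserving a sentence of care is the degenerate case where the top term $L_{n+1-\delta}$ vanishes — for instance if $\mathcal{R}$ is already a free $\mathcal{A}$-module, so that $\delta=n+1$, $\mathcal{R}=L_0$, and the resolution has length $0$; then the set $\{n_{\chi,i,j}\}$ consists only of the generator degrees of $\mathcal{R}$, there are no indices $i>0$, and the bound still holds a fortiori because $s\ge 0$. In all remaining cases the inequality is exactly the one-line combination above.
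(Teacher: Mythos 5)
Your argument is correct and is exactly the one the paper intends: the proposition is stated there as an immediate consequence of the length bound $n+1-\delta$ on the resolution from Lemma \ref{G_min_res} together with the estimate $n_{\chi,i,j}\le i+s$, combined precisely as you do. Nothing further is needed.
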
 
We refer to \cite{Mum66}, Lecture 14 for the fact that given a Hilbert polynomial $h$, $\forall [X]\in \mathsf{M}_h(\mathbf{Spec}(\mathbb{C}))$, the regularity of $\mathcal{R}(X,\omega_X)$ (as an $\mathcal{A}$-module) is bounded by a polynomial in the coefficients of $h(mx)$.\\

Observe that the ring $\mathcal{R}$ is a direct sum of graded $\mathcal{A}$-submodules: $\mathcal{R}=\bigoplus^{m-1}_{j=0}\mathcal{R}(j),$ where $\mathcal{R}(j):=\bigoplus_{i\geq 0}\mathcal{R}_{j+mi}$. Hence we have the following proposition:
\begin{prop}
For large $k$, the $G$-representation on $\mathcal{R}_k$ is determined by the representation on $\mathcal{R}_m$ and the representations on the lower degree summands $\mathcal{R}_l$ whose degree $l$ lies in the same modulo $m$ congruence class of $k$. 
\end{prop}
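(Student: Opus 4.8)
\emph{Proof plan.} The idea is to study each summand $\mathcal{R}(j)$, $0\le j\le m-1$, separately and to read off $\mathcal{R}_k$ (for $k\equiv j\pmod m$) from a minimal $G$-equivariant free resolution of $\mathcal{R}(j)$ over $\mathcal{A}$. First I would record two structural facts. Since $\mathcal{A}=\mathrm{Sym}(\mathcal{R}_m)$ as a graded $G$-algebra, $\mathcal{A}$ — and in particular each piece $\mathcal{A}_{im}\cong\mathrm{Sym}^i(\mathcal{R}_m)$ — is determined, as a $G$-representation, by $\mathcal{R}_m$ alone. Moreover, as already noted above, $\mathcal{R}=\bigoplus_{j=0}^{m-1}\mathcal{R}(j)$ is a decomposition into graded $\mathcal{A}$-submodules, each a direct summand and hence finitely generated over $\mathcal{A}$, and each carrying a compatible $G$-action in the sense of \ref{actionRA}. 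After dividing all degrees by $m$ (legitimate because $\mathcal{A}$ lives in degrees divisible by $m$ and $\mathcal{R}(j)$ in degrees $\equiv j\pmod m$), $\mathcal{A}$ becomes a standard graded polynomial ring $\mathbb{C}[x_0,\dots,x_n]$ and $\mathcal{R}(j)$ a finitely generated graded $\mathcal{A}$-module to which Lemma \ref{G_min_res} applies verbatim.

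Next I would apply Lemma \ref{G_min_res} to $\mathcal{M}=\mathcal{R}(j)$, obtaining a minimal $G$-equivariant free resolution
\begin{equation*}
0\to L^{(j)}_{r_j}\to\cdots\to L^{(j)}_1\to L^{(j)}_0\to \mathcal{R}(j)\to 0,\qquad L^{(j)}_i=\bigoplus_{\chi\in Irrchar(G)}\bigoplus_{l}\mathcal{A}(-n^{(j)}_{\chi,i,l})\otimes V_\chi,
\end{equation*}
with $r_j\le n+1$. By the Castelnuovo--Mumford regularity estimates recalled above (the bound $i\le n_{\chi,i,l}\le i+s$, $s$ the regularity of $\mathcal{R}$, which is bounded in terms of $h$ alone; cf.\ \cite{Cat92}, Section 2), every twist $n^{(j)}_{\chi,i,l}$ is bounded by the constant $N'=N'(h,G)$ introduced above, so that, back in the original grading, every free generator of every $L^{(j)}_i$ sits in degree $\le N'$. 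Since every differential is $G$-equivariant and taking the degree-$k$ piece is exact, for any $k\equiv j\pmod m$ we obtain an exact sequence of finite-dimensional $G$-representations
\begin{equation*}
0\to (L^{(j)}_{r_j})_k\to\cdots\to (L^{(j)}_0)_k\to \mathcal{R}_k\to 0,
\end{equation*}
and hence, in the representation ring $R(G)$, the Euler-characteristic identity
\begin{equation*}
[\mathcal{R}_k]=\sum_{i}(-1)^{i}\sum_{\chi,l}\bigl[\mathrm{Sym}^{(k-n^{(j)}_{\chi,i,l})/m}(\mathcal{R}_m)\bigr]\cdot[V_\chi],
\end{equation*}
with the convention that a symmetric power of negative index is $0$.

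To finish I would argue that the $G$-graded Betti data $\{(n^{(j)}_{\chi,i,l},\chi)\}$ is itself recoverable from the information permitted in the statement. A minimal free resolution is built degree by degree: $L^{(j)}_0$ is governed by a minimal $G$-generating set of $\mathcal{R}(j)$, then $L^{(j)}_1$ by one of $\ker(L^{(j)}_0\to\mathcal{R}(j))$, and so on; computing these only requires knowing $\mathcal{R}(j)$, with its $\mathcal{A}$-module structure, in degrees up to the largest twist, i.e.\ up to degree $N'$. Thus the whole resolution of $\mathcal{R}(j)$, together with all the $n^{(j)}_{\chi,i,l}$ and the characters $\chi$, is determined by the truncation $\bigoplus_{l\le N',\,l\equiv j}\mathcal{R}_l$, while $\mathcal{R}_m$ fixes $\mathcal{A}$ and all the $\mathrm{Sym}^i(\mathcal{R}_m)$ occurring above. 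Combined with the displayed identity, this shows that for $k>N'$ the class $[\mathcal{R}_k]$, and therefore the representation $\mathcal{R}_k$ up to isomorphism, is a function of $\mathcal{R}_m$ and of the $\mathcal{R}_l$ with $l<k$, $l\equiv k\pmod m$.

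The step I expect to be the main obstacle is precisely the last one: the graded Betti numbers of $\mathcal{R}(j)$ depend on the $\mathcal{A}$-module structure of $\mathcal{R}$, not merely on the underlying graded vector spaces or on the $\mathcal{R}_l$ as abstract $G$-representations, so some care is needed about what ``determined by'' means. In the setting at hand, where $X$ (hence the ring $\mathcal{R}$) is fixed and one only asks which congruence classes and which finitely many low degrees must be inspected, this is harmless; a fully formal version would state the conclusion as an identity in $R(G)$ in which the finitely many $h$- and $G$-bounded Betti invariants of $\mathcal{R}(j)$ enter as explicit auxiliary input. A secondary, routine point is to check that the rescaling $\deg\mapsto\deg/m$ really puts us in the hypotheses of Lemma \ref{G_min_res}, which amounts to observing that $\mathcal{A}$ is genuinely concentrated in degrees divisible by $m$ and that the compatibility conditions \ref{actionRA} descend to each submodule $\mathcal{R}(j)$.
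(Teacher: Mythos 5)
Your argument is correct and follows essentially the same route as the paper, which states this proposition without proof as an immediate consequence of the decomposition $\mathcal{R}=\bigoplus_{j=0}^{m-1}\mathcal{R}(j)$ into graded $\mathcal{A}$-submodules combined with Lemma \ref{G_min_res} and the regularity bound on the twists. The obstacle you flag at the end is not serious: only the alternating sum $P(t)=\sum_i(-1)^i\sum_{\chi,l}[V_\chi]\,t^{(n^{(j)}_{\chi,i,l}-j)/m}$ enters your identity for $[\mathcal{R}_k]$, and since $P(t)=H_{\mathcal{R}(j)}(t)\cdot H_{\mathcal{A}}(t)^{-1}$ in $R(G)[[t]]$ (with $H_{\mathcal{A}}$ invertible because its constant term is the trivial representation) is a polynomial of degree bounded in terms of $N'(h,G)$, it is already determined by the low-degree representations $\mathcal{R}_l$, $l\equiv k \pmod m$, together with $\mathcal{R}_m$, without reference to the $\mathcal{A}$-module structure.
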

In the rest of this section we answer question $1$ and question $2$ for curves using topological methods.
In the case of curves we use genera instead of Hilbert polynomials, for instance, for curves of genus $g\geq 2$ the corresponding moduli space is denoted by $\mathfrak{M}_g[G]$.
\begin{definition}
Given a vector $v=(a_1,b_1,...,a_g,b_g;c_1,...c_r)\in G^{2g+r}$, we call $v$ a {\em $G$-Hurwitz vector} of type $(m_i)^r_{i=1}$ if the following conditions are satisfied:\\
$(1)$ $Ord(c_j)=m_i>1$;\\
$(2)$ $\prod^{g}_{i=1}[a_i,b_i]\prod^r_{j=1}c_j=1$;\\
$(3)$ the entries of $v$ generate the group $G$.
\end{definition}
From now on $C$ shall denote a smooth projective curve of genus $g\geq 2$. Moreover we assume that a finite group $G$ acts effectively on $C$, we denote by $C'$ the quotient curve $C/G$ and by $g'$ the genus of $C'$. The Galois cover $p:C\rightarrow C'$ is branched in $r$ points ($r=0$ if $p$ is unramified) on $C'$ with branching indices $m_1,...,m_r$. The cover $p$ has an associated Hurwitz vector $v=(a_1,b_1,...a_{g'},b_{g'};c_1,...c_r)$ of type $(m_i)^r_{i=1}$ (cf. \cite{Cat15}, 11.3).\\
The main ingredient in comparing $\mathcal{D}^G_{k,g}$ for different $k$'s is the following version of the Chevalley-Weil formula:
\begin{theorem}[Chevalley-Weil, cf.\cite{CW34}]\label{CW}
Let $(C,G,\phi)$ be a $G$-marked curve and let $v=(a_1,b_1,...a_{g'},b_{g'};c_1,...c_r)$ be a Hurwitz vector associated to the cover $C\rightarrow C/\phi(G)$. \\
Denote by $\chi_{\phi_k}$ the character of the representation $\phi_k:G\rightarrow H^0(\omega^k_C)$ which is induced naturally by $\phi$, and let $\chi_\rho$ be the character of an irreducible representation $\rho:G\rightarrow GL(W_\rho)$. We have the following formulae:
$$(1)~\langle\chi_{\phi_1},\chi_\rho\rangle=\chi_\rho(1_G)(g'-1)+\sum^r_{i=1}\sum^{m_i-1}_{\alpha=1}\frac{\alpha N_{i,\alpha}}{m_i}+\sigma,$$
where setting $\xi_{m_i}:=\exp(2\pi i/{m_i})$, $N_{i,\alpha}$ is the multiplicity of $\xi^{\alpha}_{m_i}$ as eigenvalue of $\rho(c_i)$, and $\sigma=1$ if $\rho$ is trivial, otherwise $\sigma=0$. 
$$(2)~\langle\chi_{\phi_k},\chi_\rho\rangle=\frac{2k}{|G|}\chi_\rho(1_G)(g-1)-\chi_\rho(1_G)(g'-1)-\sum^r_{i=1}\sum^{m_i-1}_{\alpha=0}N_{i,\alpha}\frac{[-\alpha-k]_{m_i}}{m_i}.$$
Here $k\geq 2$ , $[n]_{m_i}\in \{0,...,m_i-1\}$ is the congruence class of the integer $n$ modulo $m_i$.
\end{theorem}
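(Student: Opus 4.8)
The plan is to compute the character $\chi_{\phi_k}$ of the $G$-representation on $H^0(C,\omega_C^k)$ by combining the Riemann–Roch theorem on the quotient curve $C' = C/\phi(G)$ with a local analysis of the $G$-action near the ramification points. The key organizing idea is to lift the problem to the quotient: since $p\colon C \to C'$ is a Galois cover with group $G$, the sheaf $p_*(\omega_C^k)$ is a $G$-sheaf on $C'$, and decomposing it into isotypical components $p_*(\omega_C^k) = \bigoplus_{\rho \in \mathrm{Irr}(G)} \mathcal{F}_\rho \otimes W_\rho$ reduces the computation of $\langle \chi_{\phi_k}, \chi_\rho\rangle = \dim \Hom_G(W_\rho, H^0(C,\omega_C^k))$ to computing $\chi(C', \mathcal{F}_\rho) = h^0(C',\mathcal{F}_\rho)$ (the higher cohomology vanishing for $k \geq 2$, or more carefully $k\ge 1$ together with the $\sigma$ correction coming from $H^1$ in the trivial case). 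So the first step is to identify each $\mathcal{F}_\rho$ as a locally free sheaf on $C'$ and compute its degree.

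The second step is the local computation. Near a branch point $q_i \in C'$ with a preimage $\tilde q_i$ whose stabilizer is cyclic of order $m_i$ generated (up to conjugacy) by $c_i$, one chooses local coordinates so that the generator of the stabilizer acts on the coordinate $z$ on $C$ by $z \mapsto \xi_{m_i} z$ and $t = z^{m_i}$ is a coordinate on $C'$. One then expands $\omega_C^k = (dz)^{\otimes k}$ in terms of $dt$ and tracks the weights: a local section of $p_*(\omega_C^k)$ that is regular upstairs corresponds downstairs to sections of $\mathcal{F}_\rho$ with a prescribed pole/zero order at $q_i$ determined by the eigenvalue data, i.e.\ by the multiplicities $N_{i,\alpha}$ of $\xi_{m_i}^\alpha$ as eigenvalues of $\rho(c_i)$. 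This is where the quantities $[-\alpha - k]_{m_i}$ (respectively $\alpha/m_i$ for $k=1$) enter, via a floor-function computation of the form $\lfloor (k - 1 - \text{(weight)})/m_i \rfloor$ type. Carrying this out carefully at each of the $r$ branch points gives $\deg \mathcal{F}_\rho$ in terms of $\deg$ of the pullback of $\omega_{C'}^k$, the index $[G:1] = |G|$, and the correction terms $\sum_{i}\sum_\alpha N_{i,\alpha}\,[-\alpha-k]_{m_i}/m_i$.

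The third step is bookkeeping: apply Riemann–Roch on $C'$ to $\mathcal{F}_\rho$, using $\deg \omega_C = 2g-2 = |G|(2g'-2) + \sum_i |G|(1 - 1/m_i)$ (Riemann–Hurwitz) to convert between $g$ and $g'$, and substitute the local contributions from Step 2. For $k \geq 2$ the sheaf $\mathcal{F}_\rho$ has large enough degree that $h^1$ vanishes and one reads off formula (2) directly as $\chi(C',\mathcal{F}_\rho)$; for $k = 1$ the canonical sheaf $\omega_{C'}$ itself appears, the $h^1$ term contributes the Serre-dual trivial-representation piece giving the $+\sigma$ correction, and collecting terms yields formula (1). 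I expect the main obstacle to be the local eigenvalue analysis in Step 2 — getting the floor functions and the precise shape of the congruence terms $[-\alpha-k]_{m_i}$ right, including the correct normalization of how the eigenvalue index $\alpha$ of $\rho(c_i)$ pairs against the twist $k$ of the canonical bundle — since sign conventions for the local action and for which generator of the stabilizer one picks are easy to get wrong. Everything else is a routine application of Riemann–Roch and Riemann–Hurwitz once the local model is pinned down; since the statement cites \cite{CW34} and this is a classical result, I would be content to reference that source for the detailed verification rather than reproduce every floor-function manipulation.
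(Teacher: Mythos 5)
The paper does not actually prove this theorem: it states it as a classical result, citing Chevalley--Weil \cite{CW34} for the original and \cite{FG15}, Theorem 1.11 for the precise form used (with a footnote noting that the $k\geq 2$ case follows from the $k=1$ argument of that reference). Your sketch is therefore already more than the paper supplies, and its outline is the standard modern proof: push $\omega_C^k$ down to $C'$, decompose $p_*(\omega_C^k)=\bigoplus_\rho \mathcal{F}_\rho\otimes W_\rho$ into isotypic pieces, compute $\deg\mathcal{F}_\rho$ by the local weight analysis at the branch points, and finish with Riemann--Roch on $C'$ (with $h^1$ vanishing for $k\geq 2$ and Serre duality producing the $\sigma$ term for $k=1$). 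The one step you correctly identify as delicate --- the floor-function bookkeeping that produces $[-\alpha-k]_{m_i}/m_i$, including the sign convention relating the eigenvalue index $\alpha$ of $\rho(c_i)$ to the local weight of $(dz)^{\otimes k}$ --- is exactly the content that the paper outsources to \cite{FG15}, so deferring it to the cited source is consistent with the paper's own treatment. No gap relative to the paper; if you wanted a self-contained argument you would need to carry out that local computation explicitly, but as a review of the paper's (absent) proof your proposal is adequate.
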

\begin{remark}
The Chevalley-Weil formula given in (\ref{CW}) is not in the original form of \cite{CW34}, but in the form of \cite{FG15}, theorem 1.11.\footnote{The authors proved the formula for $k=1$, but with their method one easily obtains the formula for any $k$. I am thankful to C. Glei{\ss}ner for bringing these formulae to my attention.}
\end{remark}
Using (\ref{CW}), we see that $$\langle\chi_{\phi_{k+|G|}},\chi_\rho\rangle-\langle\chi_{\phi_k},\chi_\rho\rangle=2\chi_\rho(1_G)(g-1)\text{ for }k\geq2$$ and $$\langle\chi_{\phi_{1+|G|}},\chi_\rho\rangle-\langle\chi_{\phi_1},\chi_\rho\rangle=2\chi_\rho(1_G)(g-1)-\sigma ,$$ which is independent of the action $\phi$. Hence we have the following:
\begin{corollary}\label{period}
For curves of genus $g\geq 2$ and a finite group $G$, $\mathcal{D}^G_{k,g}$ and $\mathcal{D}^G_{k+|G|,g}$ give the same decomposition of $\mathfrak{M}_g[G]$ for any $k\geq 1$. Therefore we have $\mathcal{D}_g[G]=\cap^{|G|}_{k=1}\mathcal{D}^G_{k,g}$ and $N(g,G)\leq |G|$.
\end{corollary}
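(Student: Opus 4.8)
The strategy is to exploit the computation displayed just above the statement, which already contains the only substantial ingredient. That computation shows that for every $k\geq 1$ and every irreducible character $\chi_\rho$ of $G$, the difference $\langle\chi_{\phi_{k+|G|}},\chi_\rho\rangle-\langle\chi_{\phi_k},\chi_\rho\rangle$ equals $2\chi_\rho(1_G)(g-1)$ if $k\geq 2$ and $2\chi_\rho(1_G)(g-1)-\sigma$ if $k=1$, and in both cases it is independent of the $G$-marking $\phi$. So the first thing I would record is the clean reformulation: the virtual character $\chi_{\phi_{k+|G|}}-\chi_{\phi_k}$ depends only on $g$, $G$ and $k$, and not on the $G$-marked curve $(C,G,\phi)$.

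Next I would recall, for all $k\geq 1$, that the decomposition $\mathcal{D}^G_{k,g}$ of $\mathfrak{M}_g[G]$ groups connected components exactly according to the isomorphism type of the $G$-representation $\phi_k$ on $H^0(\omega^k_C)$: by \cite{Cat13}, Prop 37 this type is locally constant on $\mathfrak{M}_g[G]$, so each connected component $Z$ carries a well-defined character, which I write $\chi_{\phi_k}(Z)$ (the character of $\phi_k$ for any $G$-marked curve representing a point of $Z$), and two components $Z,Z'$ lie in the same piece of $\mathcal{D}^G_{k,g}$ if and only if $\langle\chi_{\phi_k}(Z),\chi_\rho\rangle=\langle\chi_{\phi_k}(Z'),\chi_\rho\rangle$ for every irreducible $\rho$. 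For $k$ large this is the decomposition appearing in the definition of $\mathcal{D}_g[G]$; for small $k$ I would simply take this as the definition of $\mathcal{D}^G_{k,g}$.

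The core step is then immediate: subtracting the identity of the first paragraph for $Z$ from the one for $Z'$ cancels the $\phi$-independent shift, giving
$$\langle\chi_{\phi_{k+|G|}}(Z),\chi_\rho\rangle-\langle\chi_{\phi_{k+|G|}}(Z'),\chi_\rho\rangle=\langle\chi_{\phi_k}(Z),\chi_\rho\rangle-\langle\chi_{\phi_k}(Z'),\chi_\rho\rangle$$
for every irreducible $\rho$; hence $Z,Z'$ lie in a common piece of $\mathcal{D}^G_{k,g}$ if and only if they lie in a common piece of $\mathcal{D}^G_{k+|G|,g}$. Since a decomposition here is nothing but a partition of the set of connected components of $\mathfrak{M}_g[G]$ into unions of connected components, this equality of same-piece relations says precisely $\mathcal{D}^G_{k,g}=\mathcal{D}^G_{k+|G|,g}$ for every $k\geq 1$. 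To conclude, I would use that $\mathcal{K}$ contains all sufficiently large natural numbers, hence meets every residue class modulo $|G|$: writing an arbitrary $k'\in\mathcal{K}$ as $k'=k_0+q|G|$ with $k_0\in\{1,\dots,|G|\}$ and iterating $q$ times yields $\mathcal{D}^G_{k',g}=\mathcal{D}^G_{k_0,g}$, and conversely each $\mathcal{D}^G_{k_0,g}$ with $k_0\in\{1,\dots,|G|\}$ coincides with some $\mathcal{D}^G_{k,g}$, $k\in\mathcal{K}$. Thus $\{\mathcal{D}^G_{k,g}:k\in\mathcal{K}\}=\{\mathcal{D}^G_{k,g}:k\in\{1,\dots,|G|\}\}$, so $\mathcal{D}_g[G]=\bigcap_{k\in\mathcal{K}}\mathcal{D}^G_{k,g}=\bigcap_{k=1}^{|G|}\mathcal{D}^G_{k,g}$, whence $N(g,G)\leq|G|$.

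As for difficulty: there is essentially no obstacle once Theorem \ref{CW} is granted, since everything reduces to a cancellation of characters. The only point demanding a touch of care is the exceptional term $\sigma$ in the $k=1$ formula, but because $\sigma$ depends only on whether $\rho$ is trivial, and not on $\phi$, it drops out of the difference exactly as the main term does, so the argument for $k=1$ is no different from the one for $k\geq 2$.
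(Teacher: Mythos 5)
Your argument is correct and is essentially the paper's own: the paper derives the corollary directly from the displayed computation that $\langle\chi_{\phi_{k+|G|}},\chi_\rho\rangle-\langle\chi_{\phi_k},\chi_\rho\rangle$ is independent of $\phi$, exactly as you do. Your write-up merely makes explicit the bookkeeping (decompositions as partitions of connected components determined by the characters, and the reduction of $\mathcal{K}$ to residues modulo $|G|$) that the paper leaves implicit.
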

The next example shows that $\mathcal{D}^G_{k_1,g}$ and $\mathcal{D}^G_{k_2,g}$ could be different if $|k_1-k_2|< |G|$, hence $\mathcal{D}_g[G]$ might be a proper refinement of each $\mathcal{D}^G_{k,g}$.
\begin{example}\label{diffk}
In this example $G=\mathbb{Z}/3\mathbb{Z}=\{0,\bar{1},\bar{2}\}$ and $g=6$. Let $\chi_i:G\rightarrow \mathbb{C}^*, \bar{1}\mapsto \xi^i_3$, $i=1,2$ be the nontrivial irreducible characters of $G$. Consider two $G$-marked curves $(C_1,G,\phi)$ and $(C_2,G,\phi')$ of genus $g$ with associated Hurwitz vectors 
$$v=(\bar{1},0,0,\bar{2};\bar{2},\bar{1})\text{ and}$$
$$v'=(\bar{1},\bar{1},\bar{2},\bar{2},\bar{1},\bar{1},\bar{2},\bar{2}).$$
Using (\ref{CW}), one computes easily that $\chi_{\phi_1}=2\chi_{triv}+2\chi_1+2\chi_2$, $\chi_{\phi_2}=5\chi_{triv}+5\chi_1+5\chi_2$, $\chi_{\phi_3}=9\chi_{triv}+8\chi_1+8\chi_2$; $\chi_{\phi'_1}=3\chi_1+3\chi_2$, $\chi_{\phi'_2}=5\chi_{triv}+5\chi_1+5\chi_2$ and $\chi_{\phi'_3}=11\chi_{triv}+7\chi_1+7\chi_2$. Hence we see that $\mathcal{D}^G_{2,6}$ is different from $\mathcal{D}^G_{1,6}$ and $\mathcal{D}^G_{3,6}$.
\end{example}

We answer now Question $2$. The idea is to consider the topological types of $G$-actions on curves. The following observations are important: 
\begin{remark}\label{Observatons}
\begin{itemize}
\item[•]
\item[1)]Using (\ref{CW}), we see that if two $G$-marked curves have the same marked\footnote{The word "marked" means that we do not allow automorphisms of $G$.} topological type (i.e., the equivalence class of the topological $G$-actions on a compact Riemann surface with a given genus, cf. \cite{Cat15}, 11.2), then they must have the same representation type for all $k\geq 1$.
\item[2)]Given a $G$-marked family of curves over a connected base, then the marked topological types are all the same for any $G$-marked curve in the family (cf. \cite{Cat15}, chapter 11).
\item[3)]Given two $G$-marked curves of genus $g$ such that $G$ acts freely on both curves, from (\ref{CW}) we see that the respectively induced $G$-representations on $H^0(\omega^k)$ are the same for all $k$. Moreover both representations on $H^0(\omega^k)$ are direct sum of regular $G$-representations for $k\geq 2$.
\end{itemize}
\end{remark}
\begin{definition}\label{regular component}
(1) Given a finite group $G$, we denote by $\chi_{r.r}$ the character of the {\em regular representation} of $G$ on the group algebra $\mathbb{C}[G]$ induced by left translation of $G$.\\
(2) The {\em component of the regular representation} $(\mathfrak{M}_g[G])_{r.r}$ of $\mathfrak{M}_g[G]$ (with respect to the decomposition $\mathcal{D}_g[G]$) is the subscheme of $\mathfrak{M}_g[G]$ consisting of the $G$-marked curves $[(C,G,\phi)]$ of genus $g$ such that there exists a sequence of natural numbers $\{n_k\}$, such that $\chi_{\phi_k}=n_k\chi_{r.r}$ for all $k\geq 2$.
\end{definition}
Recall that a split metacyclic group $G$ is a split extension of two cyclic groups, or equivalently $G$ has the following presentation:
$$G=\langle x,y|x^m=y^n=1,yxy^{-1}=x^r\rangle$$
 where $m,n$ and $r$ are positive integers such that $r^n\equiv 1\mod m$. \\

Using the above observations and assuming $G$ is a nonabelian split metacyclic group, we give a lower bound for the number of connected components of $(\mathfrak{M}_g[G])_{r.r}$.\\

Denote by $\mathfrak{MTF}(G,g)$ the set of marked topological types of {\em free} $G$-actions on a compact Riemann surface of genus $g$. By \ref{Observatons}. 2) and 3) we see that $(\mathfrak{M}_g[G])_{r.r}$ has at least $|\mathfrak{MTF}(G,g)|$ connected components.\\

In the case that $G$ is a nonabelian split metacyclic group, we have the following result of Edmonds.
\begin{theorem}[\cite{Edm83}, Theorem 1.7]\label{Thm_edm}
Given $G$ a nonabelian split metacyclic group, then there is a bijection $\mathbf{B}: \mathfrak{MTF}(G,g)\rightarrow H_2(G,\mathbb{Z})$.
\end{theorem}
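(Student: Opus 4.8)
The plan is to identify $\mathbf{B}$ with the characteristic (bordism) class of the free $G$-cover underlying a given action, and then to show that for a split metacyclic group this class is a complete invariant requiring no stabilization in the genus. First I would construct the map: a free $G$-action on $C=\Sigma_g$ has quotient $C'=C/G$ a closed oriented surface of genus $g'$ with $g-1=|G|(g'-1)$ (so $|G|\mid g-1$ is forced, and since $G$ is nonabelian $g'\geq 2$), and the cover $C\to C'$ is classified by a surjection $\bar\phi\colon\pi_1(C')\twoheadrightarrow G$, unique up to conjugation. Because $g'\geq 1$, the surface $C'$ is aspherical, so $\bar\phi$ is induced by a map $f\colon C'\to BG$ unique up to homotopy, and I set $\mathbf{B}([C,G,\phi]):=f_*[C']\in H_2(BG,\mathbb{Z})=H_2(G,\mathbb{Z})$. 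Well-definedness on marked topological types is then immediate: conjugating $\bar\phi$ changes $f$ only by a homotopy, an orientation-preserving self-homeomorphism of $C'$ fixes $[C']$ so the mapping class group action does not move $\mathbf{B}$, and since we quotient by $\Aut(G)$ on neither side the target is $H_2(G,\mathbb{Z})$ itself and not a quotient of it.

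Surjectivity (for the genera $g$ with $\mathfrak{MTF}(G,g)\neq\emptyset$) is the comparatively soft part. Since oriented bordism coincides with homology in degree $2$, every $\alpha\in H_2(G,\mathbb{Z})$ is $f_*[\Sigma_h]$ for some closed oriented surface; connect-summing with handles that realize a generating set of $G$ (sending one loop of each added handle to a generator and the other to $1$) makes $\bar\phi$ surjective without changing $\alpha$, and further trivial handles raise the genus to $g'$ while fixing $\alpha$. The one input that uses the structure of $G$ is that the genus needed never exceeds $g'$; for split metacyclic $G$ this follows by writing down an explicit epimorphism from a two-generator presentation $G=\langle x\rangle\rtimes\langle y\rangle$ at genus $g'$ with prescribed $\mathbf{B}$-value.

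Injectivity is the heart of the proof. Given surjections $\bar\phi_0,\bar\phi_1\colon\pi_1(\Sigma_{g'})\twoheadrightarrow G$ with $\mathbf{B}(\bar\phi_0)=\mathbf{B}(\bar\phi_1)$, I would encode each by a generating vector $(a_1,b_1,\dots,a_{g'},b_{g'})\in G^{2g'}$ with $\prod_{i}[a_i,b_i]=1$, on which $\mathrm{Mod}(\Sigma_{g'})$ acts through the standard Humphries-type handle moves. One first proves a stabilization statement: after attaching some number $k$ of handles the two vectors become $\mathrm{Mod}(\Sigma_{g'+k})$-equivalent, which is the bordism-of-$G$-covers argument of Livingston and Dunfield--Thurston and rests on $\mathbf{B}$ being complete in the stable range. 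Then comes a destabilization argument special to $G$: exploiting that $G$ is two-generated, one shows that a generating vector produced by stabilizing can be moved, through handle moves, into one whose last pair $(a_{g'+1},b_{g'+1})$ commutes and carries the trivial homology class, hence can be deleted, and iterating removes every added handle. The engine is a normal-form classification of generating vectors of split metacyclic groups combined with the explicit, cyclic computation of $H_2(G)$, which is exactly the obstruction to absorbing one handle. I expect this destabilization to be the main obstacle: the analogous statement is false for general finite groups, and here one must use both the two-generator property and the semidirect-product structure to control the commutators $[a_i,b_i]\in[G,G]\subseteq\langle x\rangle$ and to match generating vectors under the mapping-class-group moves without increasing the genus.
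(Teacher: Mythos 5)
First, a point of reference: the paper does not prove this statement at all --- it is imported verbatim as Theorem~1.7 of Edmonds' \emph{Surface symmetry II}, so there is no internal proof to compare against; the relevant comparison is with Edmonds' original argument. Your construction of $\mathbf{B}$ is the right one and matches Edmonds': a free action gives a surjection $\pi_1(C')\twoheadrightarrow G$ with $C'$ aspherical of genus $g'\geq 2$, and one pushes forward the fundamental class to $H_2(G,\mathbb{Z})$. Your remarks on well-definedness and on surjectivity (degree-two bordism equals homology, stabilize by handles to force surjectivity of $\bar\phi$, realize any class already at genus $g'$ using that $G$ is two-generated) are sound in outline and consistent with how this is actually done.

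The genuine gap is exactly where you locate it: injectivity. You reduce it to the assertion that two generating vectors in $G^{2g'}$ with vanishing product of commutators and equal image in $H_2(G,\mathbb{Z})$ are equivalent under the mapping class group action \emph{at genus $g'$}, and you propose to get this by first stabilizing (Livingston/Dunfield--Thurston) and then destabilizing handle by handle. But the destabilization step is precisely the content of Edmonds' theorem, it is false for general finite groups, and your proposal offers no actual mechanism for it beyond naming the ingredients (two-generation, $[G,G]\subseteq\langle x\rangle$, the cyclic computation of $H_2(G)$). Edmonds' proof does not in fact proceed by stabilize-then-destabilize; it establishes a normal form for generating $2g'$-vectors of a split metacyclic group under the Nielsen-type moves induced by the mapping class group, showing directly that the only residual invariant is the class in $H_2(G)\cong\mathbb{Z}/d\mathbb{Z}$ computed from the exponents appearing in the commutators. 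Without carrying out that normal-form reduction (or an equivalent destabilization lemma with proof), the argument does not close; as written it assumes the theorem's hardest part. A secondary, fixable issue: you should state explicitly that the homeomorphisms defining the marked topological type are orientation-preserving, since otherwise $\mathbf{B}$ is only well defined up to sign and the map would be onto $H_2(G,\mathbb{Z})/\{\pm 1\}$ rather than a bijection onto $H_2(G,\mathbb{Z})$.
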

With our preceding discussion, we immediately have the following:
\begin{prop}
Let $G$ be a nonabelian split metacyclic group: $\forall g\geq 2$, $(\mathfrak{M}_g[G])_{r.r}$ has at least $|H_2(G,\mathbb{Z})|$ connected components.
\end{prop}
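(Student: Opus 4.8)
The Proposition is essentially the assembly of two facts already prepared above: that $(\mathfrak{M}_g[G])_{r.r}$ contains at least one connected component for each marked topological type of a \emph{free} $G$-action on a genus $g$ curve, and Edmonds' count of such topological types (Theorem \ref{Thm_edm}). So the plan is to make the first point precise and then quote Theorem \ref{Thm_edm}.

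First I would check that every $G$-marked curve $[(C,G,\phi)]$ of genus $g$ on which $G$ acts freely lies in the subscheme $(\mathfrak{M}_g[G])_{r.r}$ of Definition \ref{regular component}. Indeed, for a free action the Galois cover $C\to C/\phi(G)$ is unramified, so $r=0$ in its Hurwitz data and the correction sums in the Chevalley--Weil formula vanish; part $(2)$ of Theorem \ref{CW} then gives $\langle\chi_{\phi_k},\chi_\rho\rangle=\tfrac{2k}{|G|}\chi_\rho(1_G)(g-1)$ for every irreducible $\rho$ and every $k\geq 2$, i.e. $\chi_{\phi_k}=n_k\chi_{r.r}$ with $n_k=2k(g-1)/|G|$, which is exactly the defining condition of $(\mathfrak{M}_g[G])_{r.r}$. (Such curves exist precisely when $|G|\mid g-1$; outside this case $\mathfrak{MTF}(G,g)=\emptyset$ and the bound is empty.) By the surjectivity part of the coarse moduli property, in the curve setting (cf. \ref{prop5} and the proof of Theorem \ref{thm}), each such marked curve corresponds to a point of $\mathfrak{M}_g[G]$, hence of $(\mathfrak{M}_g[G])_{r.r}$.

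Next I would invoke the topological rigidity of Remark \ref{Observatons}: by 2) the marked topological type is locally constant on connected $G$-marked families, so any two $G$-marked curves lying in a single connected component of $\mathfrak{M}_g[G]$ share the same marked topological type --- here one uses that $\mathfrak{M}_g[G]$ is the GIT quotient built in Section 4, so that a chain of curves joining two points of a component lifts to a connected family over the parametrizing scheme carrying the universal $G$-marked family. Together with 1) (equal marked topological type $\Rightarrow$ equal representation type in every degree, again via Theorem \ref{CW}), this shows that distinct marked topological types of free $G$-actions force distinct connected components of $(\mathfrak{M}_g[G])_{r.r}$; hence $(\mathfrak{M}_g[G])_{r.r}$ has at least $|\mathfrak{MTF}(G,g)|$ connected components. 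Finally, since $G$ is a nonabelian split metacyclic group, Theorem \ref{Thm_edm} supplies a bijection $\mathfrak{MTF}(G,g)\to H_2(G,\mathbb{Z})$, so $|\mathfrak{MTF}(G,g)|=|H_2(G,\mathbb{Z})|$ and the Proposition follows.

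I do not expect a genuinely hard step: the heavy inputs --- the Chevalley--Weil formula, Edmonds' classification, and the existence and explicit GIT description of $\mathfrak{M}_g[G]$ --- are all in hand, and the argument is a bookkeeping assembly of them. The one place to be careful is the implication ``same connected component $\Rightarrow$ same marked topological type'', which for a \emph{coarse} moduli space is not automatic from the moduli functor alone and needs the concrete quotient picture of Section 4; this, rather than anything in the topology, is where the real (if modest) work sits.
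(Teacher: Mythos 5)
Your proof follows essentially the same route as the paper: it combines Remark \ref{Observatons} (free actions give regular representations by Chevalley--Weil, and marked topological type is locally constant in connected families, so distinct free marked topological types yield distinct connected components) with Edmonds' bijection $\mathfrak{MTF}(G,g)\to H_2(G,\mathbb{Z})$ from Theorem \ref{Thm_edm}. Your added care about the coarse-moduli/quotient picture and about the vacuous case $|G|\nmid g-1$ only makes explicit what the paper leaves implicit.
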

In the end we provide a formula to compute $H_2(G,\mathbb{Z})$ for a split metacycilc group $G$. 
 \begin{lemma}
 $H_2(G)=\mathbb{Z}/d\mathbb{Z}$, where $d=\frac{\gcd(m,~r-1)\gcd(m,~\Sigma^{n-1}_{i=0}r^i)}{m}$ .
 \end{lemma}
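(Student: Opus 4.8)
The plan is to identify the Schur multiplier $H_2(G,\mathbb{Z})$ with a twisted homology group of $\mathbb{Z}/n$ and then compute the latter by hand. Write the split metacyclic presentation as a split extension
$$1 \longrightarrow A \longrightarrow G \longrightarrow Q \longrightarrow 1,$$
where $A=\langle x\rangle\cong\mathbb{Z}/m$ is the normal subgroup, $Q=\mathbb{Z}/n$ is generated by the image of $y$, and the conjugation action of $y$ on $A$ is multiplication by $r$ (this is well defined since $r^n\equiv 1\bmod m$). I would then invoke the Lyndon--Hochschild--Serre homology spectral sequence $E^2_{p,q}=H_p\big(Q;H_q(A;\mathbb{Z})\big)\Rightarrow H_{p+q}(G;\mathbb{Z})$, where $Q$ acts on $H_q(A;\mathbb{Z})$ functorially. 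The only entries needed are the low-degree ones: since $A$ is cyclic, $H_0(A;\mathbb{Z})=\mathbb{Z}$ with trivial $Q$-action, $H_1(A;\mathbb{Z})=\mathbb{Z}/m$ with $Q$ acting by multiplication by $r$, and $H_2(A;\mathbb{Z})=0$; since $Q\cong\mathbb{Z}/n$ is cyclic, $H_0(Q;\mathbb{Z})=\mathbb{Z}$, $H_1(Q;\mathbb{Z})=\mathbb{Z}/n$, $H_2(Q;\mathbb{Z})=0$ and $H_3(Q;\mathbb{Z})=\mathbb{Z}/n$.

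Next I would cut the filtration of $H_2(G;\mathbb{Z})$ down to a single graded piece. Its associated graded consists of $E^\infty_{2,0}$, $E^\infty_{1,1}$ and $E^\infty_{0,2}$; the outer two vanish because $E^2_{2,0}=H_2(Q;\mathbb{Z})=0$ and $E^2_{0,2}=H_0\big(Q;H_2(A;\mathbb{Z})\big)=0$, so $H_2(G;\mathbb{Z})\cong E^\infty_{1,1}$. The term $E^2_{1,1}$ is hit only by the differential $d_2\colon E^2_{3,0}=H_3(Q;\mathbb{Z})\to E^2_{1,1}$ and maps only to the zero group, so $E^\infty_{1,1}=E^2_{1,1}/\operatorname{im}(d_2)$. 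Here the splitting hypothesis is used: the section $Q\to G$ (together with $1\hookrightarrow A$) is a morphism of extensions from $1\to 1\to Q\to Q\to 1$ to the one above, hence induces a morphism of spectral sequences which is the identity on the bottom row at $E^2$; by naturality of $d_2$ and the fact that $d_2$ vanishes identically in the trivial spectral sequence of $Q$, the differential $d_2\colon H_3(Q;\mathbb{Z})\to E^2_{1,1}$ is zero. Therefore $H_2(G;\mathbb{Z})\cong E^2_{1,1}=H_1(\mathbb{Z}/n;\mathbb{Z}/m)$, homology taken with respect to the action of $\mathbb{Z}/n$ on $\mathbb{Z}/m$ by multiplication by $r$.

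It then remains to evaluate $H_1(\mathbb{Z}/n;\mathbb{Z}/m)$. I would use the standard $2$-periodic free resolution $\cdots\xrightarrow{\,t-1\,}\mathbb{Z}[\mathbb{Z}/n]\xrightarrow{\,N\,}\mathbb{Z}[\mathbb{Z}/n]\xrightarrow{\,t-1\,}\mathbb{Z}[\mathbb{Z}/n]\to\mathbb{Z}\to 0$, with $N=1+t+\cdots+t^{n-1}$; tensoring over $\mathbb{Z}[\mathbb{Z}/n]$ with $\mathbb{Z}/m$ (on which $t$ acts as multiplication by $r$) yields the complex $\cdots\to\mathbb{Z}/m\xrightarrow{\,\cdot s\,}\mathbb{Z}/m\xrightarrow{\,\cdot(r-1)\,}\mathbb{Z}/m\to 0$, where $s:=\sum_{i=0}^{n-1}r^i$, whence $H_1(\mathbb{Z}/n;\mathbb{Z}/m)=\ker(\cdot(r-1))/\operatorname{im}(\cdot s)$ inside $\mathbb{Z}/m$. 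Since $(r-1)s=r^{n}-1\equiv 0\bmod m$ one has $\operatorname{im}(\cdot s)\subseteq\ker(\cdot(r-1))$, and counting orders gives $|\ker(\cdot(r-1))|=\gcd(m,r-1)$, $|\operatorname{im}(\cdot s)|=m/\gcd(m,s)$, so the quotient has order $\gcd(m,r-1)\gcd(m,s)/m=d$ (which is therefore a positive integer). As a subquotient of the cyclic group $\mathbb{Z}/m$ it is cyclic, hence $H_2(G;\mathbb{Z})\cong\mathbb{Z}/d\mathbb{Z}$ with $d=\dfrac{\gcd(m,r-1)\gcd\!\big(m,\sum_{i=0}^{n-1}r^i\big)}{m}$, as claimed. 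The only genuinely conceptual point is the vanishing of the incoming $d_2$ on $E^2_{1,1}$ (equivalently $E^2_{1,1}=E^\infty_{1,1}$), which is precisely where "split metacyclic" is needed; the remainder is routine. As sanity checks I would test the formula on $G=\mathbb{Z}/m\times\mathbb{Z}/n$ (take $r=1$, getting $d=\gcd(m,n)$) and on the dihedral groups (take $n=2$, $r=-1$, getting $d=\gcd(m,2)$).
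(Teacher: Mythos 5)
Your proof is correct, and it is worth noting that the paper does not actually prove this lemma at all: its ``proof'' is the single line ``See [Edm83], Lemma 1.2.'' So you have supplied a genuine argument where the paper only cites Edmonds. Your route --- the Lyndon--Hochschild--Serre homology spectral sequence for $1\to\mathbb{Z}/m\to G\to\mathbb{Z}/n\to 1$ --- is sound in every step I can check: the vanishing of $E^2_{2,0}$ and $E^2_{0,2}$ correctly reduces $H_2(G)$ to $E^\infty_{1,1}$ with no extension problem; the naturality argument via the splitting $Q\to G$ correctly kills the only possibly nonzero incoming differential $d_2\colon H_3(Q;\mathbb{Z})\to E^2_{1,1}$ (and you rightly flag this as the one place splitness enters --- e.g.\ for the non-split $Q_8$ one has $E^2_{1,1}=\mathbb{Z}/2$ but $H_2=0$, so that $d_2$ is genuinely nonzero there); and the evaluation of $H_1(\mathbb{Z}/n;\mathbb{Z}/m)$ via the $2$-periodic resolution, with $\lvert\ker(\cdot(r-1))\rvert=\gcd(m,r-1)$, $\lvert\operatorname{im}(\cdot s)\rvert=m/\gcd(m,s)$ and the containment forced by $(r-1)s=r^n-1\equiv 0\bmod m$, gives exactly the stated $d$, with cyclicity free because everything is a subquotient of $\mathbb{Z}/m$. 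Your sanity checks ($r=1$ giving $\gcd(m,n)$ for $\mathbb{Z}/m\times\mathbb{Z}/n$, and $n=2$, $r=-1$ giving $\gcd(m,2)$ for dihedral groups) both come out right. The trade-off relative to the paper is simply self-containedness versus brevity: your version makes the lemma independent of the external reference at the cost of half a page of spectral sequence bookkeeping.
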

 \begin{proof}
 See \cite{Edm83}, Lemma $1.2$.
 \end{proof}
\subsection*{Acknowledgments}
The author is currently sponsored by the project "ERC Advanced Grand 340258 TADMICAMT", part of the work took place in the realm of the DFG Forschergruppe 790 "Classification of algebraic surfaces and compact complex manifolds".\\
The author would like to thank Fabrizio Catanese for suggesting the topic of this paper, for many inspiring discussions with the author and for his encouragement to the author. The author would also like to thank Christian Glei{\ss}ner for helpful discussions and providing effective computational methods.
\bibliographystyle{alpha}

\noindent
Authors' Address: \\
\noindent
Lehrstuhl Mathematik VIII, Universit\"at Bayreuth\\
Universit\"atsstra\ss e  30, D-95447 Bayreuth\\
E-mail address:\\
binru.li@uni-bayreuth.de\\

\end{document}